\documentclass[11pt,a4paper]{amsart}
\usepackage{amsmath}
\usepackage{amssymb,xspace}
\usepackage{amstext}
\theoremstyle{plain}
\usepackage{amsbsy,amssymb,amsfonts,latexsym}
\usepackage[utf8]{inputenc}
\usepackage{xcolor}

\marginparwidth=10 true mm
\oddsidemargin=0 true mm
\evensidemargin=0 true mm
\marginparsep=5 true mm
\topmargin=0 true mm
\headheight=8 true mm
\headsep=4 true mm
\topskip=0 true mm
\footskip=15 true mm

\setlength{\textwidth}{150 true mm}
\setlength{\textheight}{220 true mm}
\setlength{\hoffset}{8 true mm}
\setlength{\voffset}{2 true mm}

\parindent=0 true mm

\usepackage{enumerate}
\usepackage{graphics}

\date{\today}
\title[Convergence and divergence of wavelet series]{Convergence and divergence of wavelet series: multifractal aspects}
\author{Fr\'ed\'eric Bayart}
\thanks{The author was partially supported by the grant ANR-17-CE40-0021 of the French National Research Agency ANR (project Front)}
\address{Université Clermont Auvergne, CNRS, LMBP, F-63000 Clermont–Ferrand, France.}
\email{Frederic.Bayart@uca.fr}

\subjclass{}

\keywords{}

\newcommand{\veps}{\varepsilon}

\def\RR{\mathbb R}

\def\NN{\mathbb N}
\def\ZZ{\mathbb Z}

\def\card{\textrm{card}}
\def\dboxsup{\overline{\dim_{\rm{B}}}}
\def\dimh{\dim_{\mathcal H}}
\def\dimp{\dim_{\mathcal P}}
\def\besov{B_p^{s,q}(\RR^d)}
\def\sobolev{W^{p,s}(\RR^d)}

\newtheorem{theorem}{Theorem}[section]

\newtheorem{lemma}[theorem]{Lemma}

\newtheorem{proposition}[theorem]{Proposition}

{\theoremstyle{definition}}

{\theoremstyle{definition}}

{\theoremstyle{definition}}

{\theoremstyle{definition}}

{\theoremstyle{definition}\newtheorem{remark}[theorem]{Remark}}

%

\newtheorem{question}[theorem]{Question}

\newtheorem*{AUBRY}{Theorem (Aubry)}
\newtheorem*{BH}{Theorem (Bayart-Heurteaux)}
\newtheorem*{EJ}{Theorem (Esser-Jaffard)}





\begin{document}

\begin{abstract}
We study the convergence and divergence of the wavelet expansion of a function in a Sobolev or a Besov space from a multifractal point of view. 
In particular, we give an upper bound for the Hausdorff and for the packing dimension of the set of points where the expansion converges (or diverges) 
at a given speed, and we show that, generically, these bounds are optimal.
\end{abstract}

\maketitle

\section{Introduction}
\subsection{Wavelet expansion}
This paper deals with the local behaviour of the wavelet expansion of a given function. Recall that an orthogonal multiresolution analysis (MRA) with scaling function $\varphi$ is a collection of subspaces $(V_j)_{j\in\mathbb Z}$ of $L^2(\RR^d)$
such that
\begin{enumerate}
 \item $V_j\subset V_{j+1}$ for all $j\in\mathbb Z$;
 \item $\bigcap_{j\in\ZZ}V_j=\{0\}$;
 \item $\bigcup_{j\in\ZZ}V_j$ is dense in $L^2(\RR^d)$;
 \item $f(x)\in V_j\iff f(2x)\in V_{j+1}$;
 \item $\varphi\in V_0$ and its integer translates $(\varphi(x-k))_{k\in\ZZ^d}$ form an orthonormal basis for $V_0$.
\end{enumerate}
The orthogonal projection $P_j$ on $V_j$  is called the partial reconstruction operator of order $j$. We can 
associate to the MRA a wavelet basis, namely a collection $\psi^{(i)}$, $i=1,\dots,2^d-1$, of functions in $L^2(\RR^d)$ such that the functions 
$$2^{dj/2}\psi^{(i)}(2^j \cdot -k)\textrm{ for }i\in\{1,\dots,2^d-1\},\ j\in\ZZ,\ k\in\ZZ^d$$
form an orthonormal basis of $L^2(\RR^d)$. The reconstruction operator $P_j$, for $j\geq 0$, can also be expressed by 
$$P_jf(x)=\sum_{k\in\ZZ^d}\langle f,\varphi(\cdot-k)\rangle\varphi(x-k)+\sum_{l<j}\sum_{i=1}^{2^d-1}\sum_{k\in\ZZ^d}2^{dl}\langle f,\psi_{l,k}^{(i)}\rangle\psi_{l,k}^{(i)}(x)$$
where $\psi_{l,k}^{(i)}=\psi^{(i)}(2^l\cdot-k)$.

Wavelet expansions have many remarkable properties. They provide unconditional basis of many function spaces, like $L^p$-spaces ($1<p<+\infty$), Sobolev spaces and Besov spaces. In particular, $(P_j f)$ converges
to $f$ with respect to the corresponding norm. In this paper, we are concerned with the pointwise convergence or divergence of $(P_jf(x))$.

\subsection{Known results}\ 

\noindent $\blacktriangleright${\bf Convergence.} This question was already investigated in many papers. In \cite{KKR94}, the authors show that $(P_jf(x))$ converges almost everywhere for all $f\in L^p$ 
($p\geq 1$): the convergence holds at all Lebesgue points of $f$. When $f$ is continuous, the convergence is locally uniform
(see \cite{Wal95}) and in smooth Sobolev spaces, one can even control $\|f-P_j f\|_\infty$ (see \cite{KR01}).

\smallskip

\noindent $\blacktriangleright${\bf Aubry results.} In \cite{Aubry06}, Aubry is the first to study the set of points 
where $(P_j f(x))$ diverges. In his paper, he answers several natural questions: can we say something on the speed of divergence of $(P_jf(x))$? Can we say something on the size of the sets of $x\in\RR^d$ such that $(P_jf(x))$ diverges at a given speed? To state Aubry's 
result, it is convenient to introduce the following sets, for $\beta>0$ and $f\in L^p(\RR^d)$:
\begin{eqnarray*}
\mathcal E^-(\beta,f)&=&\left\{x\in\RR^d;\ \limsup_j \frac{\log |P_jf(x)|}{j\log 2}\geq \beta\right\}\\
E^-(\beta,f)&=&\left\{x\in\RR^d;\ \limsup_j \frac{\log |P_jf(x)|}{j\log 2}= \beta\right\}\\
\mathcal E^+(\beta,f)&=&\left\{x\in\RR^d;\ \liminf_j \frac{\log |P_jf(x)|}{j\log 2}\geq \beta\right\}\\
E^+(\beta,f)&=&\left\{x\in\RR^d;\ \liminf_j \frac{\log |P_jf(x)|}{j\log 2}= \beta\right\}\\
E(\beta,f)&=&\left\{x\in\RR^d;\ \lim_j \frac{\log |P_jf(x)|}{j\log 2}= \beta\right\}.
\end{eqnarray*}
In what follows, $\dimh(E)$ will denote the Hausdorff dimension of $E$ and $\dimp(E)$ its packing dimension.
With this terminology, Aubry's theorem reads:
\begin{AUBRY}
Let $f\in L^p(\RR^d)$, $1<p<+\infty$ and $\beta> 0$. Then $\dimh\big(\mathcal E^-(\beta,f)\big)\leq d-\beta p$. 
Conversely, if we are working with the Haar wavelet, given a set $E\subset\RR$ such that $\dimh(E)<1-\beta p$, there exists $f\in L^p(\RR)$ such that $E\subset\mathcal E^{-}(\beta,f)$. 
\end{AUBRY}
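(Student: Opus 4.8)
The plan is to treat the two halves separately; in both, everything is read off the wavelet coefficients $c_{l,k}^{(i)}:=2^{dl}\langle f,\psi_{l,k}^{(i)}\rangle$. I will assume throughout that the wavelet is sufficiently regular --- say, compactly supported with $\mathrm{supp}\,\psi^{(i)}\subset[-N,N]^d$ --- the general case differing only by routine tail estimates.

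\emph{Upper bound.} First I would write $P_jf=P_0f+\sum_{0\le l<j}D_lf$ with $D_lf(x)=\sum_{i}\sum_{k}c_{l,k}^{(i)}\psi^{(i)}(2^lx-k)$, and note that for fixed $l$ and $x$ only $O(N^d)$ indices $k$ contribute to $D_lf(x)$, so $|D_lf(x)|\le C\,M_l(x)$ where $M_l(x):=\max\{|c_{l,k}^{(i)}|:\ 1\le i\le 2^d-1,\ |2^{-l}k-x|_\infty\le N2^{-l}\}$, while $|P_0f(x)|\le C\|f\|_p$ since $(\langle f,\varphi(\cdot-k)\rangle)_k\in\ell^p$. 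Summing the resulting geometric estimate would give $\limsup_j\frac{\log|P_jf(x)|}{j\log2}\le\max\bigl(0,\limsup_l\frac{\log M_l(x)}{l\log2}\bigr)$, so any $x\in\mathcal E^-(\beta,f)$ must satisfy $\limsup_l\frac{\log M_l(x)}{l\log2}\ge\beta$; equivalently, for every $\veps>0$, $x$ lies in infinitely many of the sets $\bigcup_{k\in A_l(\veps)}B_\infty(2^{-l}k,N2^{-l})$, where $A_l(\veps):=\{k\in\ZZ^d:\ \exists\,i,\ |c_{l,k}^{(i)}|\ge2^{l(\beta-\veps)}\}$. The next step bounds $\#A_l(\veps)$: Hölder's inequality applied to $c_{l,k}^{(i)}=2^{dl}\int f\,\psi^{(i)}(2^l\cdot-k)$ gives $|c_{l,k}^{(i)}|^p\le C\,2^{dl}\int_{Q_{l,k}}|f|^p$ with $Q_{l,k}$ a cube of side $\asymp2^{-l}$, and since for fixed $l$ these cubes have bounded overlap, summing over $k\in A_l(\veps)$ yields $\#A_l(\veps)\le C\|f\|_p^p\,2^{l(d-p\beta+p\veps)}$. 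Finally I would cover $\mathcal E^-(\beta,f)$ by $\bigcup_{l\ge L}\bigcup_{k\in A_l(\veps)}B_\infty(2^{-l}k,N2^{-l})$ and estimate the $s$-dimensional Hausdorff content: $\sum_{l\ge L}\#A_l(\veps)(2N2^{-l})^s\to0$ as $L\to\infty$ whenever $s>d-p\beta+p\veps$; hence $\dimh\mathcal E^-(\beta,f)\le d-p\beta+p\veps$ for every $\veps>0$, i.e. $\dimh\mathcal E^-(\beta,f)\le d-p\beta$.

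\emph{Converse for the Haar wavelet.} Now $d=1$ and the concrete formula $P_jf(x)=2^j\int_{I_j(x)}f$ holds, with $I_j(x)$ the dyadic interval of length $2^{-j}$ containing $x$. I would fix $s'$ with $\dimh(E)<s'<1-\beta p$, so that $\mathcal H^{s'}(E)=0$. For each $n$, I would start from a cover of $E$ by intervals of length at most $2^{-n-2}$ whose $s'$-sum $\sum_U|U|^{s'}$ is as small as we wish, replace each $U$ by the (at most two) dyadic intervals of length $<2|U|$ meeting it, and keep only the \emph{maximal} dyadic intervals so obtained: this produces a pairwise disjoint family $\D_n$ of dyadic intervals covering $E$ with $|I|\le2^{-n}$ and $\sum_{I\in\D_n}|I|^{s'}\le2^{-n}$. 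Then I would set $f_n:=\sum_{I\in\D_n}|I|^{-\beta}\mathbf 1_I$ and $f:=\sup_n f_n$. Disjointness gives $\|f_n\|_p^p=\sum_{I\in\D_n}|I|^{1-\beta p}\le2^{-n(1-\beta p-s')}\sum_{I\in\D_n}|I|^{s'}\le2^{-n(2-\beta p-s')}$, which is summable in $n$ precisely because $1-\beta p-s'>0$; hence $f\le\sum_n f_n\in L^p(\RR)$. Finally, if $x\in E$ then for each $n$ there is $I\in\D_n$ with $x\in I$ and $|I|=2^{-j_n}$, $j_n\ge n$; since $I=I_{j_n}(x)$ and $f\ge f_n=|I|^{-\beta}$ on $I$, we get $P_{j_n}f(x)=2^{j_n}\int_I f\ge2^{j_n}|I|^{1-\beta}=2^{j_n\beta}$, and $j_n\to\infty$, so $x\in\mathcal E^-(\beta,f)$. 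Thus $E\subset\mathcal E^-(\beta,f)$.

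\emph{Main obstacle.} The conceptual content is modest; the work lies in the estimates. In the upper bound, making the localization-plus-geometric-summation argument run for an arbitrary admissible (non-compactly-supported) wavelet requires replacing $M_l(x)$ by a rapidly convergent weighted maximum and the Hölder step by a weighted $L^p$-estimate summed over dyadic annuli, which --- provided the wavelet decays fast enough --- costs nothing in the exponents. In the converse, the only delicate point is the passage from an abstract $\mathcal H^{s'}$-economical cover to a disjoint dyadic one with comparable $s'$-sum. Neither difficulty is deep, but both must be handled carefully to recover the sharp thresholds $d-\beta p$ and $1-\beta p$.
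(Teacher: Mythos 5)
Your proposal is correct, and for the upper bound it is essentially the argument the paper itself uses (following Esser--Jaffard) in Proposition \ref{prop:dimh}: pass from largeness of $P_jf(x)$ to largeness of at least one nearby coefficient, count the cubes carrying a coefficient $\geq 2^{l(\beta-\veps)}$ via the $L^p$-norm (your Hölder step is the $s=0$ instance of the bound $\card(\Gamma_{j,\gamma})\leq C2^{(d-sp-\gamma p)j}$), and cover $\mathcal E^-(\beta,f)$ by the limsup of the enlarged cubes. The only cosmetic difference is that you enlarge by $N2^{-l}$ using compact support, whereas the paper enlarges by $2^{-(1-\veps)l}$ and invokes Lemma \ref{lem:wavelet2} so as to handle wavelets with merely fast decay; as you note, this costs nothing in the exponent. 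For the converse the paper gives no proof (it cites Aubry), but your construction --- disjointify an $\mathcal H^{s'}$-economical cover into dyadic intervals, load each with $|I|^{-\beta}\mathbf 1_I$, and exploit that the Haar projection is the nonnegative conditional expectation $P_jf(x)=2^j\int_{I_j(x)}f$ --- is exactly the mechanism the paper alludes to when it says the second half of Aubry's theorem relies on the positivity of the Haar $P_j$; Section \ref{sec:existence} of the paper is precisely the machinery needed to dispense with that positivity for general compactly supported wavelets, which your argument does not attempt and does not need for this statement.
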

Strictly speaking, Aubry's result was formulated for periodized wavelets, but his proof carries on to our context.

\smallskip

\noindent $\blacktriangleright${\bf Bayart-Heurteaux results.} In \cite{BH17}, 
as an application of the general framework developed there, the authors improve the results of Aubry in two directions. First, they provide a bound for the dimension of
$\mathcal E^+(\beta,f)$
involving the packing dimension. Second, in the spirit of \cite{Jaf00} for the study of the local H\"older exponent and of \cite{BAYHEUR1} for the divergence of Fourier series,
they show that we can construct functions whose behaviour is multifractal with respect to the divergence of their wavelet expansion.

\begin{BH}
 Assume that we are working with the Haar wavelet. 
\begin{itemize}
\item[(i)] For all $\beta\in(0,1/2]$ and all $f\in L^2(\mathbb R)$, 
$$\dimp\big(\mathcal E^+(\beta,f)\big)\leq 1-2\beta;$$
\item[(ii)] For all functions $f$ in a residual subset of $L^2(\mathbb R)$, for all $\beta\in (0,1/2]$, 
$$\dimh\big(E^-(\beta,f)\big)=1-2\beta;$$
\item[(iii)] There exists a function $f\in L^2(\mathbb R)$ such that, for all $\beta\in(0,1/2]$, 
$$\dimh\big(E(\beta,f)\big)=\dimp\big(E(\beta,f)\big)=1-2\beta.$$
\end{itemize}
\end{BH}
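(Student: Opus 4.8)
The plan is to prove the three items in increasing order of difficulty: (i) is a soft covering estimate, the upper bounds in (ii) and (iii) follow from it together with the $d=1$, $p=2$ case of Aubry's theorem, and the substance is the construction of saturating functions.

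\textbf{Part (i) and the reductions.} For the Haar system one has $P_jf(x)=2^j\int_{I_j(x)}f$, where $I_j(x)$ is the dyadic interval of length $2^{-j}$ containing $x$. Writing
$$\mathcal E^+(\beta,f)=\bigcup_{n\geq1}\bigcup_{J\geq1}F_{n,J},\qquad F_{n,J}=\big\{x:\ |P_jf(x)|\geq 2^{j(\beta-1/n)}\text{ for all }j\geq J\big\},$$
and using that the packing dimension is countably stable and dominated, on each bounded piece, by the upper box dimension, it is enough to bound $\dboxsup\big(F_{n,J}\cap[m,m+1]\big)$. For $j\geq J$ this set is covered by the dyadic intervals $I\subset[m,m+1]$ of length $2^{-j}$ with $\big|\int_I f\big|\geq 2^{-j(1-\beta+1/n)}$; by Cauchy--Schwarz each such $I$ satisfies $\|f\mathbf 1_I\|_2^2\geq 2^{j(2\beta-1-2/n)}$, so disjointness gives at most $\|f\|_2^2\,2^{j(1-2\beta+2/n)}$ of them, whence $\dboxsup\leq 1-2\beta+2/n$ and, letting $n\to\infty$, item (i). The same computation (or Aubry's theorem) gives $\dimh(\mathcal E^-(\beta,f))\leq 1-2\beta$ for every $f\in L^2(\RR)$; since $E^-(\beta,f)\subset\mathcal E^-(\beta,f)$ and $E(\beta,f)\subset\mathcal E^-(\beta,f)\cap\mathcal E^+(\beta,f)$, this supplies the upper bounds in (ii) and (iii), and reduces (iii) to proving $\dimh(E(\beta,f))\geq 1-2\beta$ (then $\dimh\leq\dimp$ and (i) force equality everywhere).

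\textbf{Construction for (iii).} I would build one $f$ via its Haar coefficients on $[0,1]$. Fix $\veps_l\downarrow 0$ with $\veps_l l\to\infty$, an integer sequence $Q_l\to\infty$ growing slowly, and at each scale $l$ a grid $G_l\subset(0,1/2]$ of $Q_l$ points, with $G_l$ refining $G_{l-1}$ so that the grids exhaust $(0,1/2]$. For each $\gamma\in G_l$ reserve a collection $\mathcal T_l^\gamma$ of dyadic intervals of length $2^{-l}$ with $\#\mathcal T_l^\gamma\approx 2^{(1-2\gamma)l}$, organised into a tree (intervals of $\mathcal T_l^\gamma$ lying under those of $\mathcal T_{l-1}^{\gamma'}$ for the parent grid value $\gamma'$) so that $K^\gamma=\bigcap_l\bigcup_{I\in\mathcal T_l^\gamma}I$ carries the natural measure $\mu^\gamma$ with $\mu^\gamma(I)\approx 2^{-(1-2\gamma)l}$ on generation-$l$ intervals. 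On each $I\in\mathcal T_l^\gamma$ place a single Haar coefficient of modulus $2^{(\gamma-1/2-\veps_l)l}$ with signs aligned along branches, so that $|P_lf(x)|\approx 2^{(\gamma-\veps_l)l}$ for $x$ in the generation-$l$ part of $K^\gamma$ (contributions from the other grid values vanish at $x$, and the carry-over from earlier scales is of lower order). Then $\|f\|_2^2\approx\sum_l Q_l\,2^{-2\veps_l l}<\infty$, so $f\in L^2(\RR)$. For any $\gamma\in(0,1/2]$, steering $x$ at scale $l$ into $K^{\gamma_l}$ with $\gamma_l\to\gamma$ (possible because the grids refine) gives $\log|P_lf(x)|/(l\log2)\to\gamma$, i.e. $x\in E(\gamma,f)$; the set of such $x$ is a Cantor set of Frostman exponent $1-2\gamma$, so by the mass distribution principle $\dimh(E(\gamma,f))\geq 1-2\gamma$, which with the upper bounds yields (iii).

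\textbf{Part (ii).} The blocks of the previous paragraph feed a Baire category argument in $L^2(\RR)$. For $\beta$ rational in $(0,1/2]$ and $k\geq1$, declare that $f$ \emph{has a stage-$k$ witness} if $f$ coincides on some finite union of dyadic intervals with a small rescaled translate of one of the building blocks, producing a stage-$k$ approximation of a Cantor set of dimension $>1-2\beta-1/k$ inside $E^-(\beta,f)$; this condition is open, and it is dense because any $g\in L^2(\RR)$ can be perturbed by less than $\veps$ in $L^2$-norm by adding such a rescaled block supported away from a prescribed large portion of $g$. Intersecting over $k$ gives a dense $G_\delta$ of $f$ with $\dimh(E^-(\beta,f))\geq 1-2\beta$; intersecting further over the countably many $\beta$ and over families of blocks targeting finer and finer grids (so as to reach every real $\beta\in(0,1/2]$ by the same squeeze as in the construction) produces the residual set of (ii), the matching upper bound being the one recorded in Part (i).

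\textbf{Main obstacle.} The whole difficulty sits in the construction: one must realise the exponent \emph{equal to} $\beta$, not merely $\geq\beta$, on a set of dimension \emph{exactly} $1-2\beta$, while keeping $f\in L^2$ --- a genuine tension, since the "honest" choice of $\approx 2^{(1-2\beta)l}$ coefficients of modulus $\approx 2^{(\beta-1/2)l}$ contributes order $1$ to $\|f\|_2^2$ at every scale. The cure is the $\veps_l$-shaving together with the refining grids, and the delicate part is arranging these so that the squeeze $\gamma_l\to\gamma$ pins the exponent down to \emph{every} prescribed value simultaneously while preserving both the $L^2$-summability and the Frostman lower bounds on the resulting family of Cantor sets.
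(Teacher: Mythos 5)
This theorem is quoted in the paper from \cite{BH17} rather than reproved; what the paper does prove is the generalization (Theorem \ref{thm:main1}) via Propositions \ref{prop:dimh}, \ref{prop:packing1} and the Cantor-set machinery of Section \ref{sec:existence}. Measured against that, your Part (i) is correct and genuinely more elementary: for the Haar system $P_jf$ is an averaging operator, so a single scale $j$ already localizes the $L^2$ mass, and your Cauchy--Schwarz-plus-disjointness count gives $\dboxsup(F_{n,J})\leq 1-2\beta+2/n$ directly, whereas the paper must pass to the blocks $Q_l$ for $l$ in a window around $j$ (Lemma \ref{lem:packing0}) and work with localized norms, and in return only controls $\dimp(E(\beta,f))$ rather than $\dimp(\mathcal E^+(\beta,f))$ --- indeed Theorem \ref{thm:badpacking} shows the $\mathcal E^+$ bound fails for general wavelets when $s>0$, so your argument is exactly the Haar-specific phenomenon. (Minor slip: the displayed $\mathcal E^+(\beta,f)=\bigcup_n\bigcup_J F_{n,J}$ should be ``$\mathcal E^+(\beta,f)\subset\bigcup_J F_{n,J}$ for each fixed $n$''; your concluding sentence uses the correct logic.) Your ``signs aligned along branches'' is the Haar shortcut for what the paper's Proposition \ref{prop:bigcantorset} achieves for general compactly supported wavelets.

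Two points in (ii)--(iii) need repair. First, in (ii) the condition ``$f$ coincides on some finite union of dyadic intervals with a rescaled block'' is a closed, nowhere dense condition in $L^2$, not an open one; the openness must instead come from finitely many strict inequalities $|P_jg(x)|>2^{(\beta-1/k)j}$ holding for all $x$ in a \emph{compact} set, which are stable under perturbation of $g$ by continuity and compactness (this is exactly Lemma \ref{lem:residualfixedlevel}), with density supplied by perturbing a dense sequence by small multiples of the saturating function. Second, and more substantively, your route to the \emph{exact} exponent in (iii) --- the $\veps_l$-shaving plus the squeeze $\gamma_l\to\gamma$ along refining grids, simultaneously for every $\gamma\in(0,1/2]$ --- is precisely the ``delicate part'' you flag and is left unexecuted; verifying that each squeeze set has Frostman exponent $1-2\gamma$ and that the upper bound on $|P_lf(x)|$ holds along every squeeze is where the work lies. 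The paper avoids this tension entirely: it builds $f$ so that $\liminf_j\log|P_jf(x)|/(j\log 2)\geq\beta$ on a compact $F_\beta$ with $\mathcal H^{1-2\beta}(F_\beta)>0$, and then pins the limit down to exactly $\beta$ for free via the decomposition $F_\beta=\big(F_\beta\cap E(\beta,f)\big)\cup\bigcup_{\gamma>\beta}\big(F_\beta\cap E^-(\gamma,f)\big)$ together with $\mathcal H^{1-2\beta}\big(\mathcal E^-(\gamma,f)\big)=0$ for $\gamma>\beta$ (Proposition \ref{prop:dimh}). Adopting that device would let you discard the $\veps_l$-shaving and the per-point squeeze analysis and would close the main gap in your construction.
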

It should be pointed out that, to deduce this result from the general results proved in \cite{BH17}, specific properties of the Haar basis were needed, in
particular the positivity of the projections $P_j$. These specific properties were also important for the proof of the second half of Aubry's theorem.

\smallskip

\noindent $\blacktriangleright${\bf Esser-Jaffard results.} Very recently, Esser and Jaffard undertake in \cite{EJ17} a multifractal analysis of the divergence
of general wavelet series belonging to Besov spaces $B_{p}^{s,q}(\RR^d)$. From now on, wavelets are assumed to be smooth enough, say, with at least derivatives up to order $\lfloor s\rfloor+1$
having fast decay. To overcome the difficulty of working in a general context, Esser and Jaffard do not study the behaviour of $|P_jf(x)|$, but that of the coefficients
$2^{dj}\langle f,\psi_{j,k}^{(i)}\rangle\psi_{j,k}^{(i)}(x)$. More precisely, let us define, for $\beta\in\RR$, 
\begin{eqnarray*}
 \mathcal F^{-}(\beta,f)&=&\left\{x\in\RR^d;\ \limsup_j \frac{\log \sup_{i,k}|2^{dj}\langle f,\psi_{j,k}^{(i)}\rangle\psi_{j,k}^{(i)}(x)|}{j\log 2}\geq\beta\right\}\\
 F^{-}(\beta,f)&=&\left\{x\in\RR^d;\ \limsup_j \frac{\log \sup_{i,k}|2^{dj}\langle f,\psi_{j,k}^{(i)}\rangle\psi_{j,k}^{(i)}(x)|}{j\log 2}=\beta\right\}.
\end{eqnarray*}
It can be easily observed (see \cite[Proposition 2.1]{EJ17}) that, for all $0<\gamma<\beta$, $\mathcal E^-(\beta,f)\subset \mathcal F^-(\gamma,f)$ (heuristically speaking,
if the sum is large, at least one of the coefficients
should be large). 

With this terminology, we can state their main theorem as follows.
\begin{EJ}
 Let $s\geq 0$, $p,q\in (0,+\infty)$. 
 \begin{itemize}
  \item[(i)] For all $f\in \besov$, for all $\beta\in \left[-s,\frac dp-s\right]$, $\dimh\big(\mathcal F^-(\beta,f)\big)\leq d-sp-\beta p$.
  \item[(ii)] For all $f$ in a residual and prevalent subset of $\besov$, for all $\beta\in \left[-s,\frac dp-s\right]$, $\dimh\big(F^-(\beta,f)\big)=d-sp-\beta p$.
 \end{itemize}
\end{EJ}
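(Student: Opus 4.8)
The plan is to prove (i) by a covering argument and (ii) by combining that bound with a multifractal construction.

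\textbf{Part (i).} I would use the wavelet characterization of $\besov$: writing $c_{j,k}^{(i)}=2^{dj}\langle f,\psi_{j,k}^{(i)}\rangle$, membership in $\besov$ is equivalent to $\sum_{j\ge 0}2^{j(s-d/p)q}\big(\sum_{i,k}|c_{j,k}^{(i)}|^p\big)^{q/p}<\infty$; in particular $|c_{j,k}^{(i)}|\le C\|f\|\,2^{j(d/p-s)}$ and $\sum_{i,k}|c_{j,k}^{(i)}|^p\le C_j\,2^{j(d-sp)}$ with $C_j\to 0$. Fix $\beta$ in the range and $\gamma<\beta$. If $x\in\mathcal F^-(\beta,f)$, then for infinitely many $j$ there are $i,k$ with $|c_{j,k}^{(i)}\psi^{(i)}(2^jx-k)|\ge 2^{\gamma j}$, so by the fast decay $|\psi^{(i)}(y)|\le C_N(1+|y|)^{-N}$ and the pointwise bound on the coefficients one gets $|2^jx-k|\le C\,2^{\veps_Nj}$ with $\veps_N=(d/p-s-\gamma)/N\to 0$, while $\max_i|c_{j,k}^{(i)}|\ge c\,2^{\gamma j}$. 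Hence $\mathcal F^-(\beta,f)\subset\limsup_j A_j$, where $A_j$ is covered by balls of radius $\asymp 2^{-j(1-\veps_N)}$ about the points $2^{-j}k$ with $\max_i|c_{j,k}^{(i)}|\ge c\,2^{\gamma j}$, of which there are at most $C\,2^{j(d-sp-\gamma p)}$ by Chebyshev. Summing $\sum_{\ell\ge j}2^{\ell(d-sp-\gamma p)}2^{-\ell(1-\veps_N)\eta}$ gives $\dimh(\limsup_j A_j)\le(d-sp-\gamma p)/(1-\veps_N)$, and letting $N\to\infty$, $\gamma\uparrow\beta$ proves (i). The same estimate shows, more precisely, that for every $J$ the set $\mathcal F^-(\beta,f)$ lies in a union of $\le C\,2^{j(d-sp-\gamma p)}$ balls of radius $\asymp 2^{-j}$ over all $j\ge J$; this refined form is used below.

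\textbf{Part (ii), upper bound and building block.} Since $F^-(\beta,f)\subset\mathcal F^-(\beta,f)$, (i) gives $\dimh(F^-(\beta,f))\le d-sp-\beta p$ for every $f$, so only the reverse inequality remains, for generic and for prevalent $f$. The engine is a \emph{building block} $g\in\besov$ — most conveniently a random wavelet series, with coefficients at scale $j$ that are independent, suitably normalized and heavy-tailed enough that $\#\{k:\max_i|c_{j,k}^{(i)}(g)|\ge 2^{\beta j}\}\asymp 2^{j(d-sp-\beta p)}$ for every $\beta\in[-s,d/p-s]$ — whose full multifractal spectrum is $\beta\mapsto d-sp-\beta p$. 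I would prove this in the usual two halves: the upper half is part (i) applied to $g$, and the lower half is a mass distribution argument producing, for each $\beta$, a measure $\nu_\beta$ carried by $\{x:\beta_g(x)=\beta\}$, where $\beta_g(x):=\limsup_j\frac{\log\sup_{i,k}|c_{j,k}^{(i)}(g)\psi_{j,k}^{(i)}(x)|}{j\log 2}$, that is Frostman of exponent $d-sp-\beta p-\delta$ for every $\delta>0$. Concretely $\nu_\beta$ is the natural mass on the nested Cantor family of cubes carrying a coefficient $\asymp 2^{\beta j_n}$ along a lacunary subsequence $(j_n)$ and small coefficients in between: lacunarity forces the $\limsup$ to equal $\beta$ \emph{exactly} on this set, and spreading the surviving cubes gives the Frostman bound. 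A deterministic Jaffard-type analogue of $g$ can be used instead when randomness is not wanted.

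\textbf{Part (ii), the $\mathcal F^-\!\to\! F^-$ passage, prevalence and residuality.} Add $g$ (or a rescaled copy) to a given $f$. On the support of $\nu_\beta$ one has $\beta_g=\beta$, hence $\beta_{f+g}\le\max(\beta_f,\beta)$, so it suffices to discard the points where $\beta_f>\beta$. The set $\{x:\beta_f(x)>\beta\}=\bigcup_{m\ge 1}\mathcal F^-(\beta+\tfrac1m,f)$ may well have dimension $d-sp-\beta p$, but it is a \emph{countable} union of sets of dimension $\le d-sp-(\beta+\tfrac1m)p$, each of which is $\mathcal H^{d-sp-\beta p-\delta}$-null for $\delta<p/m$; since $\nu_\beta$ is Frostman of every exponent $<d-sp-\beta p$, it gives this union measure $0$. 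Thus $\beta_{f+g}=\beta$ on a set of positive $\nu_\beta$-measure — once one also rules out the cancellation $c_{j_n,k}(f)\approx -c_{j_n,k}(g)$ at all large $n$, which is automatic in the random model (independent signs) and can be arranged in the deterministic one — so $\dimh F^-(\beta,f+g)\ge d-sp-\beta p$. For \emph{prevalence} I would take $\mu$ to be the law of the random $g$ above, a compactly supported probability measure on $\besov$: for every $f$, $\mu$-almost every $g$ makes $\dimh F^-(\beta,f+g)=d-sp-\beta p$ for all $\beta$ at once, the ``for all $\beta$'' being part of the a.s.\ validity of the spectrum of $g$. For \emph{residuality} I would write the good set as $\bigcap_n\mathcal O_n$, where $\mathcal O_n$ is the set — open because the coefficient functionals are continuous on $\besov$ — of $f$ whose coefficients reproduce, up to a small error and for some admissible placement at scales $>n$, the first $n$ levels of a fixed deterministic full-spectrum $g$; density holds because such a finite block has arbitrarily small Besov norm and can be placed off the finitely many non-negligible coefficients of $f$ at each scale involved, and for $f\in\bigcap_n\mathcal O_n$ all of $g$ (with its measures $\nu_\beta$) reappears inside $f$, so the argument above applies verbatim.

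\textbf{Endpoints and main obstacle.} The two endpoints are easier and handled directly: $\beta=d/p-s$ (target dimension $0$) only asks for one point whose coefficients are as large as $\besov$ allows along a subsequence, and $\beta=-s$ (target dimension $d$) asks that for generic and prevalent $f$ Lebesgue-a.e.\ $x$ satisfies $\beta_f(x)=-s$, where $\le$ is (i) and $\ge$ follows from the generic saturation $\sum_{i,k}|c_{j,k}^{(i)}(f)|^p\asymp 2^{j(d-sp)}$ along infinitely many scales. The step I expect to resist most is the $\mathcal F^-\to F^-$ reduction together with the construction and spectrum computation of $g$: producing a single building block whose Frostman measures $\nu_\beta$ exist simultaneously for all $\beta$, and checking that adding an arbitrary $f\in\besov$ does not destroy them (the cancellation and the Fubini-type argument over the random parameters), is where essentially all the work concentrates.
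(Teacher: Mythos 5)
This statement is not proved in the paper: it is quoted as a known theorem of Esser and Jaffard (\cite{EJ17}), and the paper only uses it (and adapts its part (i) argument). With that caveat, your part (i) is correct and is exactly the covering argument the paper itself reproduces in Proposition \ref{prop:dimh} for the $R_j$-sets: the cubes carrying a coefficient $\geq 2^{\gamma j}$ number at most $C2^{(d-sp-\gamma p)j}$ by Chebyshev, the fast decay of $\psi^{(i)}$ confines any $x$ with a large term to a $2^{-(1-\veps)j}$-neighbourhood of such a cube, and the $\limsup$ of these enlarged cubes has dimension at most $(d-sp-\gamma p)/(1-\veps)$. Your refinement that $\mathcal F^-(\beta,f)$ is $\mathcal H^{d-sp-\beta p}$-null once $\gamma>\beta$ is also exactly what is needed later, and is how the paper itself passes from $\mathcal E^-$ to $E^-$ and $E$ in Section \ref{sec:existence}.

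For part (ii) your route is the standard Fraysse--Jaffard/Esser--Jaffard one and no step of it is wrong, but be aware that essentially the entire content of the theorem is concentrated in the object you only postulate: a single $g$ carrying, \emph{simultaneously for all} $\beta$, a Frostman measure $\nu_\beta$ of every exponent $<d-sp-\beta p$ supported where the coefficient-exponent equals $\beta$ exactly, together with the verification that for every fixed $f$ the perturbation $f+g$ (or $f+cg$ for a.e.\ $c$) does not cancel the relevant coefficients. Your $\mathcal H^{d-sp-\beta p}$-null argument for discarding $\{x:\beta_f(x)>\beta\}$ is the right way to convert $\mathcal F^-$ into $F^-$, and your observation that the union over $m$ of sets of dimension $d-sp-\beta p-p/m$ must be handled at the level of Hausdorff \emph{measure} (not dimension) is the correct subtlety. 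Note finally that the reason the present paper cannot simply reuse this machinery for its Theorems \ref{thm:main1} and \ref{thm:main2} is that for the partial sums $P_jf(x)$ one must prevent cancellation \emph{between} wavelets at the same and at different scales, which is why Section \ref{sec:existence} builds the Cantor set of Proposition \ref{prop:bigcantorset} forcing $\psi^{(1)}_{\mu(\lambda)}\geq 1$; for the coefficient sets $\mathcal F^-(\beta,f)$ this issue does not arise, and your lighter construction is appropriate.
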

Prevalence is an extension of the notion of almost everywhere in infinite-dimensional vector spaces. We shall use only the following properties (which appear e.g. in \cite{Chr72} or in \cite{HSY})
where $X$ is any Banach space:
\begin{itemize}
 \item the countable intersection of prevalent subsets of $X$ remains a prevalent subset of $X$;
 \item if $A\subset B\subset X$ and $A$ is a prevalent subset of $X$, then $B$ is a prevalent subset of $X$;
 \item in order to prove that $Y\subset X$ is prevalent, it is enough to find a finite-dimensional subspace $V$ of $X$ such that, 
 for all $f\in X$, for almost all $v\in V$ (with respect to the Lebesgue measure on $V$), $f+v\in Y$. In that case, we say that $Y$ is $\dim V$-prevalent.
\end{itemize}

\smallskip

\noindent $\blacktriangleright${\bf Our results.} In the present paper, we come back to the study of the divergence of $(P_jf(x))$, which seems more delicate since compensations can come into play. We also investigate the $\liminf$ and $\lim$ sets, namely $\mathcal E^+(\beta,f)$, $E^+(\beta,f)$ and $E(\beta,f)$, which need very careful constructions since we want to control $(P_j)$ for all $j$ and not only for some $j$. Our first result is a full generalization of the results of Aubry and Bayart/Heurteaux to all wavelet basis with compact support and to Besov and Sobolev spaces
admitting functions whose wavelet expansion diverges at some point (namely when $d-sp>0$). 
\begin{theorem}\label{thm:main1}
 Let $s\geq 0$, $p,q\in [1,+\infty)$ and $X=\besov$ or $X=\sobolev$. Assume that the wavelets have compact support.
 \begin{itemize}
  \item[(i)] For all $f\in X$, for all $\beta\in \left(0,\frac dp-s\right]$,
  \begin{eqnarray*}
   \dimh\big(\mathcal E^-(\beta,f)\big)&\leq&d-sp-\beta p\\
      \dimp\big( E(\beta,f)\big)&\leq&d-sp-\beta p.
  \end{eqnarray*}
\item[(ii)] For all  $f$ in a residual and prevalent subset of $X$, for all $\beta\in\left(0,\frac dp-s\right]$, 
$$\dimh\big(E^{-}(\beta,f)\big)=d-sp-\beta p.$$ 
\item[(iii)] There exists $f\in X$ such that for all $\beta\in\left(0,\frac dp-s\right]$,
$$\dimh\big(E(\beta,f)\big)=\dimp\big(E(\beta,f)\big)=d-sp-\beta p.$$
 \end{itemize}
\end{theorem}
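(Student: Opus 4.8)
Two estimates do the work. First, compact support of the wavelets gives, for fixed $x$, that only boundedly many cubes $\lambda$ of each scale $2^{-l}$ meet the support of $\psi_\lambda$ at $x$, so $|P_{l+1}f(x)-P_lf(x)|\lesssim 2^{dl}\max\{|\langle f,\psi_\lambda\rangle| : |\lambda|=2^{-l},\ \psi_\lambda(x)\neq 0\}$ and hence $|P_jf(x)|\lesssim 1+\sum_{l<j}2^{dl}\max_\lambda|\langle f,\psi_\lambda\rangle|$ on any fixed cube. Second, $X\hookrightarrow\besov$ with $q=\infty$ (valid for every Besov and Sobolev space in the statement) yields $\#\{\lambda : |\lambda|=2^{-l},\ |\langle f,\psi_\lambda\rangle|\geq\delta\}\lesssim\delta^{-p}2^{-pl(s+d-d/p)}$. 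Combining these, a point of $\mathcal E^-(\beta,f)$ lies, for infinitely many $j$ and some scale $2^{-l}$ with $c_\beta j<l<j$, in the dilated support of a cube $\lambda$ with $2^{dl}|\langle f,\psi_\lambda\rangle|\gtrsim 2^{j(\beta-\varepsilon)}$, and the number of such cubes is $\lesssim 2^{l(d-sp)}2^{-jp(\beta-\varepsilon)}$. Covering $\mathcal E^-(\beta,f)$ (intersected with a fixed cube) by these dilated supports over all $j\geq J$, summing first the geometric series in $j$ and then the $t$-th powers of the diameters, leaves $\sum_l 2^{l(d-sp-t-p(\beta-\varepsilon))}$, whose tail vanishes as $J\to\infty$ once $t>d-sp-p\beta$; hence $\dimh(\mathcal E^-(\beta,f))\leq d-sp-\beta p$. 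For $\dimp(E(\beta,f))$ one uses in addition that on $E(\beta,f)$ one has $|P_jf(x)|\leq 2^{j(\beta+\varepsilon)}$ for large $j$: comparison with the telescoping sum forces the scales $2^{-l}$ that carry a coefficient large enough to matter at scale $j$ to satisfy $(1-\delta_\varepsilon)j<l<j$ with $\delta_\varepsilon\to0$. Writing $E(\beta,f)$ as a countable union of sets on which all these bounds hold uniformly from some scale on and covering each by cubes of side $2^{-j}$ for one large $j$, the covering number is $\lesssim\sum_{(1-\delta_\varepsilon)j<l<j}2^{l(d-sp)}2^{-jp(\beta-\varepsilon)}2^{d(j-l)}\lesssim 2^{j(d-sp-p\beta+o(1))}$, so each piece has upper box dimension $\leq d-sp-\beta p+o(1)$; countable stability of the packing dimension finishes it.

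\textbf{Part (ii).} Since $E^-(\beta,f)\subset\mathcal E^-(\beta,f)$, item (i) already gives ``$\leq$'', so only the lower bound is at stake. The mechanism: if $f$ has, at infinitely many scales $j$, coefficients of modulus $\simeq 2^{-j(s+d-d/p)-j\theta/p}$ with $\theta=d-sp-\beta p$ on about $2^{j\theta}$ cubes well spread inside every dyadic cube of a much coarser scale, then along a lacunary subsequence of such scales one threads through these cubes a homogeneous Cantor set of dimension $\theta$ (mass distribution principle for the lower bound on its dimension); on it the surviving coefficients make $|P_jf|\simeq 2^{j(\beta-o(1))}$ along the subsequence, so $\limsup_j\frac{\log|P_jf|}{j\log 2}\geq\beta$, while discarding at every scale the cubes carrying a coefficient larger than $2^{-j(s+d-d/p)-\eta j}$ — a proportion that is exponentially small precisely because $q<\infty$ forces the scale-$j$ Besov energy to tend to $0$ — keeps $\limsup$ from exceeding $\beta$, so the Cantor set sits inside $E^-(\beta,f)$. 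Residuality: the set of $f\in X$ for which this pattern occurs, for each exponent in a fixed countable dense subset of $(0,d/p-s]$ and each tolerance $1/m$, at arbitrarily large scales, is a countable intersection of open dense sets — open because the maps $f\mapsto\langle f,\psi_\lambda\rangle$ are continuous on $X$, dense because one can add a small wavelet polynomial living at a single high scale. Prevalence: the same set is shown prevalent by translating along one well-chosen $g_0\in X$ that carries all these patterns, using a Fubini-plus-second-moment estimate over the translation parameter, in the spirit of \cite{EJ17}, to see that $f+tg_0$ retains the patterns for Lebesgue-almost every $t$. Finally, to reach every $\beta$ (not only those in the net) while still landing in $E^-$ and not merely in $\mathcal E^-$, one builds the Cantor set directly with exponents $\gamma_j\uparrow\beta$ drawn from the net, so that $\limsup$ on it equals exactly $\beta$.

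\textbf{Part (iii).} For the single multifractal function I would give a deterministic hierarchical construction. Inside one (or countably many pairwise separated) dyadic region(s), arrange that at every scale $j$ the wavelet coefficients take, for each $\gamma$ in a $(1/j)$-net of $[1/\log j,\ d/p-s]$, modulus $\simeq 2^{j(\gamma-d)}j^{-c}$ (with $c>1/p+1/q$, which makes the Besov or Sobolev energy summable) on about $2^{j(d-sp-\gamma p)}$ cubes laid out hierarchically and with coherent signs, the cubes of distinct exponents being kept well separated. For a given $\beta$, thread through these a homogeneous Cantor set using at scale $j$ the cubes of exponent $\gamma_j\uparrow\beta$; a point $x$ of this set then feels only its $\gamma_j$-coefficient at scale $j$, so by coherence of signs, slow variation of $\gamma_j$, and compact support localizing the computation inside the region, $|P_jf(x)|\simeq 2^{j\gamma_j+o(j)}$ for \emph{all} large $j$, i.e.\ $\lim_j\frac{\log|P_jf(x)|}{j\log 2}=\beta$. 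The Cantor set has $\dimh=\dimp=\dboxsup=d-sp-\beta p$, so with (i) this forces $\dimh(E(\beta,f))=\dimp(E(\beta,f))=d-sp-\beta p$. The essential difference with the $\limsup$ case is that here the construction must be active at essentially every scale, not lacunarily, so that $|P_jf|$ has the right size at all $j$ and not only along a subsequence — this is the source of the ``very careful constructions'' alluded to in the introduction for the $\liminf$ and $\lim$ sets.

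\textbf{Main obstacle.} The delicate point, felt in (ii) and acute in (iii), is to force divergence from below without overshooting: to make $\limsup$ (resp.\ $\lim$) exactly $\beta$ on a set whose dimension already reaches the optimal value $d-sp-\beta p$. Because $P_jf(x)$ mixes all scales $\leq j$, a single unexpectedly large coefficient at an intermediate scale can lift $\limsup$ above $\beta$, and destructive interference among the scale-$l$ terms can pull the partial sums below $2^{j(\beta-\varepsilon)}$; the two tools that make the threading arguments work are the hypothesis $q<\infty$ — which sends the scale-$j$ energy to $0$ and thereby confines the ``dangerous'' coefficients to an exponentially small fraction of cubes — and, for the $\lim$ sets, the requirement that the construction be present at almost every scale so that the partial sums keep the correct order uniformly in $j$.
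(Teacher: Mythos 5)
Your Part (i) is essentially the paper's argument: the Hausdorff bound by covering $\mathcal E^-(\beta,f)$ with (enlarged) supports of cubes carrying large coefficients and counting them via the $B_p^{s,\infty}$-norm, and the packing bound by using the two-sided control on $E(\beta,f)$ to force a large increment $Q_l f(x)$ at some scale $l$ comparable to $j$, then box-counting the $j$-cubes that can see such an increment and invoking countable stability of $\dimp$. That part is sound.

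In Parts (ii) and (iii) there is a genuine gap: your lower bounds $|P_jf(x)|\gtrsim 2^{j\beta}$ on a set of dimension $d-sp-\beta p$ do not follow from the construction you describe. Each wavelet has vanishing mean, hence changes sign and vanishes inside its support; so placing one large coefficient per selected cube and invoking ``coherent signs'' cannot give a uniform lower bound on $|P_jf|$ over the whole threaded Cantor set --- at the zeros of the selected wavelet the contribution disappears, and at nearby points the boundedly many overlapping terms from the same and from coarser scales can cancel it. The paper's central device, which your proposal lacks, is a bespoke self-similar Cantor set $K$ (Proposition \ref{prop:bigcantorset}): after rescaling so that $\psi^{(1)}\geq 1$ on a fixed dyadic subcube, $K$ is built so that to every constituent cube $\lambda$ of generation $n$ one can attach a wavelet $\psi^{(1)}_{\mu(\lambda)}$ that is $\geq 1$ on $\lambda\cap K_n$ and $\equiv 0$ on the rest of $K_n$; this forces $P_jf\geq 0$ on $K$ and a definite lower bound at every scale, which is exactly what makes the $\liminf$ and $\lim$ statements accessible. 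Two further points where your mechanism diverges from what is needed: (a) the upper control $\limsup\leq\beta$ is not obtained by discarding cubes with large coefficients (controlling one coefficient per scale does not control the partial sum), but by choosing the target sets $F_\beta\subset K$ with $\mathcal H^{d-sp-\beta p}(F_\beta)>0$ and observing via Part (i) that $\mathcal H^{d-sp-\beta p}\big(E^-(\gamma,f)\big)=0$ for $\gamma>\beta$, so the overshooting points form a null set inside $F_\beta$; (b) for prevalence, a single probe function $g_0$ is insufficient, since the union bound over the $2^{\kappa j}$ localization cubes beats the measure gain $2^{(\beta-\gamma)j}$ from one parameter --- the paper uses a $(J-1)$-dimensional probe space of scale-separated functions $f_1,\dots,f_{J-1}$ with $(J-1)(\gamma-\beta)>\kappa$, together with the modulus-of-continuity estimate for $Q_j$ (Lemma \ref{lem:wavelet3}), to make the bad parameter set summably small.
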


Theorem \ref{thm:main1} does not cover all natural cases. Indeed, in Besov spaces, wavelet series are convergent at many points (and 
even at all points if $d-sp<0$). For such a point, one is interested in the speed of decay to zero of the remainder
$$R_j f(x)=\sum_{l\geq j}\sum_{i=1}^{2^d-1}\sum_{k\in\ZZ^d}2^{dl}\langle f,\psi_{l,k}^{(i)}\rangle\psi_{l,k}^{(i)}(x).$$
This motivates us to introduce, for $\beta<0$, the following sets:
\begin{eqnarray*}
\mathcal E^-(\beta,f)&=&\left\{x\in\RR^d;\ P_jf(x)\textrm{ converges and }\limsup_j \frac{\log |R_jf(x)|}{j\log 2}\geq \beta\right\}\\
E^-(\beta,f)&=&\left\{x\in\RR^d;\  P_jf(x)\textrm{ converges and }\limsup_j \frac{\log |R_jf(x)|}{j\log 2}= \beta\right\}\\
\mathcal E^+(\beta,f)&=&\left\{x\in\RR^d;\  P_jf(x)\textrm{ converges and }\liminf_j \frac{\log |R_jf(x)|}{j\log 2}\geq \beta\right\}\\
E^+(\beta,f)&=&\left\{x\in\RR^d;\  P_jf(x)\textrm{ converges and }\liminf_j \frac{\log |R_jf(x)|}{j\log 2}= \beta\right\}\\
E(\beta,f)&=&\left\{x\in\RR^d;\  P_jf(x)\textrm{ converges and }\lim_j \frac{\log |R_jf(x)|}{j\log 2}= \beta\right\}.
\end{eqnarray*}
We get the following version of Theorem \ref{thm:main1} for these convergence sets.

\begin{theorem}\label{thm:main2}
 Let $s\geq 0$, $p,q\in [1,+\infty)$ and $X=\besov$ or $X=\sobolev$. Assume that the wavelets have compact support.
 \begin{itemize}
  \item[(i)] For all $f\in X$, for all $\beta\in \left[-s,\min\left(0,\frac dp-s\right)\right]\backslash\{0\}$,
  \begin{eqnarray*}
   \dimh\big(\mathcal E^-(\beta,f)\big)&\leq&d-sp-\beta p\\
      \dimp\big( E(\beta,f)\big)&\leq&d-sp-\beta p.
  \end{eqnarray*}
\item[(ii)] For all  $f$ in a residual and prevalent subset of $X$, for all $\beta\in\left[-s,\min\left(0,\frac dp-s\right)\right]\backslash\{0\}$, 
$$\dimh\big(E^{-}(\beta,f)\big)=d-sp-\beta p.$$ 
\item[(iii)] There exists $f\in X$ such that for all $\beta\in\left[-s,\min\left(0,\frac dp-s\right)\right]\backslash\{0\}$,
$$\dimh\big(E(\beta,f)\big)=\dimp\big(E(\beta,f)\big)=d-sp-\beta p.$$
 \end{itemize}
\end{theorem}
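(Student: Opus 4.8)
The plan is to reduce everything to Theorem \ref{thm:main1} by a suitable transformation that exchanges divergence of $P_jf$ at speed $\beta>0$ with decay of the remainder $R_jf$ at speed $\beta<0$. First I would observe that on the wavelet side the relationship is symmetric: if $f$ has wavelet coefficients $c_{l,k}^{(i)}=2^{dl}\langle f,\psi_{l,k}^{(i)}\rangle$, then $R_jf(x)=\sum_{l\geq j}\sum_{i,k}c_{l,k}^{(i)}\psi_{l,k}^{(i)}(x)$ is, up to the finitely many low-frequency scaling terms, just a ``tail'' version of the object $P_jf(x)=\sum_{l<j}\sum_{i,k}c_{l,k}^{(i)}\psi_{l,k}^{(i)}(x)$. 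The key point is that the membership condition for $\besov$ or $\sobolev$ only constrains the weighted $\ell^p$-type norms of the sequences $(2^{l(s-d/p+d/2)}\langle f,\psi_{l,k}^{(i)}\rangle)$ scale by scale, hence is invariant under the frequency-reflection $l\mapsto -l$ after renormalisation. Concretely, I would introduce the map that sends a coefficient sequence $(c_{l,k}^{(i)})_{l\geq 0}$ to the sequence supported on $l<0$ obtained by reflecting the scale index, appropriately rescaled so that the Besov/Sobolev norm is preserved (with the exponent $s$ turning into $-s$ in the rescaling weight, which is exactly why the admissible range $[-s,\min(0,d/p-s)]\setminus\{0\}$ for $\beta$ in Theorem \ref{thm:main2} matches the range $(0,d/p-s]$ in Theorem \ref{thm:main1} after the substitution $\beta\mapsto-\beta-s$ or similar).

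Once this dictionary is set up, each of (i), (ii), (iii) transfers almost mechanically. For (i), the upper bounds $\dimh(\mathcal E^-(\beta,f))\leq d-sp-\beta p$ and $\dimp(E(\beta,f))\leq d-sp-\beta p$ follow because the corresponding sets for $R_j$ are, under the transformation, precisely the sets $\mathcal E^-(\beta',f')$, $E(\beta',f')$ of Theorem \ref{thm:main1}(i) with $\beta'$ the transformed parameter and $f'$ the transformed function lying in the same space $X$; one checks $d-sp-\beta' p=d-sp-\beta p$ under the chosen normalisation. For (ii), residuality and prevalence are preserved because the transformation is a linear homeomorphism of $X$ (a prevalent or residual set pulls back to a prevalent or residual set), and similarly for the existence statement (iii) one simply takes the image under the inverse transformation of the multifractal function produced in Theorem \ref{thm:main1}(iii). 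The only subtlety is handling the low-frequency scaling coefficients $\langle f,\varphi(\cdot-k)\rangle$: these contribute a bounded, smooth term to $R_0f$ but no term to $R_jf$ for $j\geq 1$, so they are harmless for the $\limsup$/$\liminf$/$\lim$ as $j\to\infty$ and can be absorbed or ignored.

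The main obstacle I anticipate is not the dimension arithmetic but making the reduction genuinely rigorous for $(P_jf(x))$ rather than for the ``wavelet leader''-type quantities $\mathcal F^-$, $F^-$ that Esser and Jaffard used. Because $P_jf$ (and hence $R_jf$) involves a genuine sum over scales, not a supremum, one must control cancellations: for the upper bound direction this means showing that the reflected series actually realises the stated decay rate on a set no larger than claimed (which is where compact support of the wavelets is used, to make the sum at a point $x$ essentially finite scale-by-scale), and for the lower-bound / construction directions one must ensure that the reflection of the carefully built function of Theorem \ref{thm:main1} still controls $R_jf(x)$ for \emph{all} large $j$ simultaneously, not just along a subsequence. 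I expect that the constructions behind Theorem \ref{thm:main1}(iii) are already robust enough (they were designed to control $P_j$ for all $j$), so that the transformation preserves this robustness; verifying this compatibility carefully, together with checking that the endpoint $\beta=-s$ and the exclusion of $\beta=0$ are correctly matched on both sides, is the part that will require the most attention.
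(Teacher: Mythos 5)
Your reduction strategy does not work, and the gap is structural rather than technical. The sets $\mathcal E^-(\beta,f)$, $E(\beta,f)$ are subsets of $\RR^d$ defined by the \emph{pointwise} behaviour of the sums $\sum_{l}\sum_{i,k}c_{l,k}^{(i)}\psi_{l,k}^{(i)}(x)$. A ``frequency reflection'' $l\mapsto -l$ acts (at best) on coefficient sequences, not on points of $\RR^d$: the wavelets $\psi_{l,k}^{(i)}$ at different scales are different functions of $x$, so permuting or reindexing coefficients across scales completely changes the pointwise partial sums and tails, and there is no reason the set where $R_jf$ decays slowly should be carried to the set where $P_jf'$ grows for the transformed $f'$. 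Moreover, the decomposition used in the paper has no scales $l<0$ (only $j\geq 0$ plus the scaling-function layer), so the reflected object is not an element of $X$, and Theorem \ref{thm:main1} anyway requires $s\geq 0$, not the exponent $-s$ your rescaling produces. Finally, the dimension ranges are incompatible: in Theorem \ref{thm:main1} the values $d-sp-\beta p$ for $\beta\in\left(0,\frac dp-s\right]$ fill $[0,d-sp)$, whereas in Theorem \ref{thm:main2} they fill $(\max(0,d-sp),d]$; your claim that $d-sp-\beta'p=d-sp-\beta p$ forces $\beta'=\beta<0$, which lies outside the range of Theorem \ref{thm:main1}. In particular, near the endpoint $\beta=-s$ one must realize sets $E(\beta,f)$ of dimension arbitrarily close to (and equal to) $d$, which no transfer from the divergence case can produce, since there the constructed level sets all have dimension strictly below $d-sp$.

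What the paper actually does is rerun the machinery rather than transfer it. Part (i) needs no new work: Propositions \ref{prop:dimh} and \ref{prop:packing1} are stated for all $\beta\in\left[-s,\frac dp-s\right]\backslash\{0\}$ and already treat $\beta<0$ directly by estimating $R_jf$ (covering the points where some coefficient at scale $l\geq j$ exceeds $2^{\gamma l}$, and bounding the tail elsewhere via Lemmas \ref{lem:wavelet1} and \ref{lem:wavelet2}). For (ii) and (iii), the saturating construction of Theorem \ref{thm:saturatingdiv1} is modified into Theorem \ref{thm:saturatingconv1}: the amplitude exponent $\alpha'$ is now taken in $\big(\max(0,d-sp),d'\big)$, which simultaneously forces $(P_jf(x))$ to converge everywhere and makes the tails $R_jf(x)$ nonnegative on $K$ and bounded below by $n^{-2}2^{(Nn+t)(d-sp-\alpha')/p}$ on $G$; the residuality and prevalence arguments then go through with $P_j$ replaced by $R_j$, after splitting off the (low-dimensional, by Proposition \ref{prop:dimhdv}) set where the series diverges. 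The endpoint $\beta=-s$ requires a genuinely new construction: a Cantor set of full dimension built with slowly growing scale gaps $N_n=n+t$, a gauge function $\phi(s)=s\exp(\log^{3/4}(1/s))$, and the mass distribution principle to get $\mathcal H^\phi(K)>0$, hence $\dimh\big(E(-s,f)\big)=d$. If you want to salvage your plan, you would have to give up the reduction and instead check that each step of the divergence-case proof admits a tail analogue, which is exactly the paper's route.
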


If we look carefully at Part (i) of Theorems \ref{thm:main1} and \ref{thm:main2} and if we compare it with Part (i) of Bayart/Heurteaux theorem, or with the standard 
inequality on the local dimension of measures, we observe that
we only get an estimation of the packing dimension of $E(\beta,f)$ whereas it would be natural to expect the stronger inequality $\dimp\big(\mathcal E^+(\beta,f)\big)\leq d-sp-\beta p$.
Surprizingly, when $s>0$, this inequality is not satisfied by all functions when $d-sp>0$ whereas it is satisfied by all functions if $d-sp<0$. 
\begin{theorem}\label{thm:badpacking}
 Assume that the wavelets have compact support. 
 \begin{itemize}
  \item[(i)]If $s>0$, $d=1$ and $1-sp>0$, for all $\beta\in \left(-s,\frac 1p- s\right)\backslash\{0\}$, there exists $f\in B_p^{s,1}(\mathbb R)$ such that 
 \begin{equation*}
  \dimp\left(\mathcal E^+(\beta,f)\right)>1-sp-\beta p.
 \end{equation*}
  \item[(ii)]If $d-sp<0$, for all $\beta\in \left(-s,\frac dp- s\right)\backslash\{0\}$, for all $f\in B_p^{s,\infty}(\RR^d)$,
  \[\dimp\big(\mathcal E^+(\beta,f)\big)\leq d-sp-\beta p.\]
 \end{itemize}
\end{theorem}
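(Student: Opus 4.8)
The plan is to prove the two parts by completely different means: part~(ii) is an upper bound obtained by a covering argument, while part~(i) requires an explicit (and rather delicate) construction.

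\emph{Part (ii).} Write $c_{l,k}^{(i)}=2^{dl}\langle f,\psi_{l,k}^{(i)}\rangle$; the wavelet description of $B_p^{s,\infty}(\RR^d)$ gives $\sum_{k,i}|c_{l,k}^{(i)}|^p\lesssim 2^{(d-sp)l}$, whence, since $d-sp<0$, the uniform estimate $\|R_jf\|_\infty\lesssim 2^{(d/p-s)j}$. Here $\beta<0$, so fix a ball $B$ and $m\in\NN$ with $\beta-1/m<0$ and write $\mathcal E^+(\beta,f)\cap B\subset\bigcup_N\mathcal E_{N,m}$, where $\mathcal E_{N,m}=\{x:\ |R_jf(x)|\geq 2^{(\beta-1/m)j}\text{ for all }j\geq N\}$. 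I would bound $\dboxsup(\mathcal E_{N,m}\cap B)$ as follows. Pick $\rho>1$ with $\rho(d/p-s)<\beta-1/m$, which is possible because $\beta-1/m<d/p-s<0$. If $x\in\mathcal E_{N,m}\cap B$ and $J$ is large, then $|R_Jf(x)|\geq 2^{(\beta-1/m)J}$ while $|R_{\lfloor\rho J\rfloor}f(x)|\lesssim 2^{\rho(d/p-s)J}$ is much smaller, so $|(P_{\lfloor\rho J\rfloor}f-P_Jf)(x)|\geq\frac12 2^{(\beta-1/m)J}$; expanding this difference over the $O(J)$ scales $l\in[J,\rho J)$, each carrying $O(1)$ nonzero terms at $x$ because the wavelets have compact support, forces some $(l,k)$ with $l\in[J,\rho J)$, $x\in\mathrm{supp}\,\psi_{l,k}^{(i)}$ and $|c_{l,k}^{(i)}|\geq 2^{(\beta-2/m)J}$. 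Thus $\mathcal E_{N,m}\cap B$ is covered, at scale $2^{-J}$, by those $2^{-J}$‑cubes meeting such supports, whose number is at most a constant times $\sum_{l=J}^{\lfloor\rho J\rfloor}2^{(d-sp)l}2^{-(\beta-2/m)Jp}\lesssim 2^{(d-sp-(\beta-2/m)p)J}$, the sum over $l$ being geometric and dominated by $l=J$ precisely because $d-sp<0$. Hence $\dboxsup(\mathcal E_{N,m}\cap B)\leq d-sp-\beta p+2p/m$; letting $N$ vary, $m\to\infty$ and $B$ exhaust $\RR^d$, countable stability of the packing dimension gives $\dimp(\mathcal E^+(\beta,f))\leq d-sp-\beta p$.

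\emph{Part (i), construction ($d=1$).} I would build $f$ from wavelet coefficients carried by a lacunary set of scales $J_1<J_2<\cdots$ and placed at a nested, maximally spread family of positions, so that the set $E$ of points lying, for all large $n$, in some scale‑$J_n$ support is a homogeneous Moran set with $P_n=2^{q_n}$ intervals of length $2^{-J_n}$ at generation $n$. Between two consecutive such scales the partial sum $P_jf$ (when $\beta>0$) or the tail $R_jf$ (when $\beta<0$) is constant along $E$ and comparable to the size $v_n$ of the dominating coefficient; taking $v_n\asymp 2^{\beta J_{n+1}}$ if $\beta>0$, respectively $v_n\asymp 2^{\beta J_{n-1}}$ if $\beta<0$, one checks that $E\subset\mathcal E^+(\beta,f)$. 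The condition $f\in B_p^{s,1}(\RR)$ reduces to $\sum_n 2^{J_n(s-1/p)}2^{q_n/p}v_n<\infty$, which caps the branching $q_n$, while $\dimp E=\dboxsup E=\limsup_n\frac{q_n}{J_{n-1}+q_n-q_{n-1}}$ (a standard identity for such Moran sets, the box‑counting ratio peaking at scale $2^{-(J_{n-1}+q_n-q_{n-1})}$) has to be pushed above $1-sp-\beta p$. When $\beta<0$ the Besov bound reads $q_n\lesssim(1-sp)J_n-\beta p\,J_{n-1}-\veps J_n$ and \emph{both} penalising terms help, so letting the $J_n$ grow geometrically with a large ratio yields $q_n\approx(1-sp)J_n$ and $\dimp E$ as close to $1$ as desired, hence $>1-sp-\beta p$ since $1-sp-\beta p<1$. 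When $\beta>0$ the bound $q_n\lesssim(1-sp)J_n-\beta p\,J_{n+1}$ forces $J_{n+1}/J_n<\rho^\ast:=\tfrac{1-sp}{\beta p}$, and a self‑similar choice only attains $1-sp-\beta p$ in general; instead I would use a two‑regime scale sequence --- most steps with ratio $1+\veps$, along which $q_n$ is kept $\approx(1-sp-\beta p)J_n$, interspersed with a sparse subsequence of steps where the scale is multiplied by nearly $\rho^\ast$. At such a step the box‑counting ratio is $\approx\frac{B\rho^\ast}{1+B(\rho^\ast-1)}$ with $B=1-sp-\beta p$, and this exceeds $B$ exactly because $\rho^\ast>1$ and $B<1$; therefore $\dimp E\geq\frac{B\rho^\ast}{1+B(\rho^\ast-1)}>1-sp-\beta p$ for every admissible $\beta$.

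The main obstacle is precisely this balancing act when $\beta>0$: membership in $B_p^{s,1}$ simultaneously limits the branching rate of $E$ and the growth rate of the scales, and only a non‑self‑similar, ``burst'' choice of scales beats the expected value $1-sp-\beta p$. The remaining work is bookkeeping: tuning the $\veps$'s and the length of each slow regime, checking $q_n-q_{n-1}\leq J_n-J_{n-1}$ so that the subintervals fit inside the good part of their parent's support, verifying that the non‑dominating coefficients cannot cancel the main one (which holds because the $v_n$ are comparable to a geometric sequence), and handling integer rounding of the scales.
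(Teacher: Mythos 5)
Your part (ii) is correct and is a legitimate alternative to the paper's argument: instead of localizing the $\ell^p$-norm of the coefficients on the cubes of $\Theta_j$ (the paper's Lemmas \ref{lem:packingbest1} and \ref{lem:packingbest2}), you pigeonhole a single large coefficient out of the difference $R_Jf-R_{\lfloor\rho J\rfloor}f$ and cover by the supports of the large coefficients, in the spirit of Proposition \ref{prop:dimh}; both routes use the compact support and the hypothesis $d-sp<0$ at the same places. Likewise, in part (i) the overall strategy (a Moran set with $\dimh<\dimp$, coefficients normalized so that the ``current'' term dominates, positivity forced by placing the set in the region where $\psi\geq 1$) is the paper's strategy, and your $\beta<0$ computation is consistent.

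There is, however, a genuine gap in part (i) for $\beta>0$, precisely at the ``burst'' step which is the heart of the matter. At a jump generation $n$ with $J_{n+1}\approx\rho^*J_n$, your own Besov constraint $q_n\lesssim(1-sp)J_n-\beta p\,J_{n+1}$ forces $q_n\approx\big(1-sp-\beta p\rho^*\big)J_n\approx 0$ — and since $(q_m)$ is nondecreasing this throttles \emph{all} earlier branching — whereas your box-counting ratio $\frac{B\rho^*}{1+B(\rho^*-1)}$ is computed as if $q_n\approx BJ_n$ could be maintained right up to the jump. With the correct value $q_n\approx0$ the ratio at the critical scale is $\frac{B\rho}{1+B\rho}$, which exceeds $B$ only when $\rho>\frac{1}{1-B}$; since $\rho$ is capped by $\rho^*=\frac{1-sp}{\beta p}$, the construction beats $1-sp-\beta p$ exactly when $\rho^*>\frac1{1-B}$, i.e.\ when $sp(1-sp-\beta p)>0$. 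This is where the hypothesis $s>0$ must enter — your argument as written never uses it and would ``prove'' the statement for $s=0$, which the paper points out is false for the Haar basis. (A second, related bookkeeping point: one must also check the fitting condition $q_{n+1}-q_n\leq J_{n+1}-J_n$ at the jump, i.e.\ $B\rho\leq\rho-1$, which is again the inequality $\rho\geq\frac1{1-B}$.) So the mechanism is salvageable, but the quantitative claim and the identification of where $s>0$ is used must be redone; the paper's two-regime Cantor set with $u=1+\veps$, $v=1+(\frac1\alpha-1)\veps$ is exactly a correct implementation of this balancing act, and it only beats $1-sp-\beta p$ by a margin $sp\,\veps+o(\veps)$, not by the fixed amount your formula suggests.
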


The paper is organized as follows. In Section \ref{sec:preliminaries}, we introduce definitions and notations used throughout the paper.
Section \ref{sec:dimension} contains the proof of part (i) of Theorems \ref{thm:main1} and \ref{thm:main2} and even more: we do not need
the assumption that the wavelets have compact support here. 
Section \ref{sec:existence} is devoted to the proof of the remaining parts of these theorems. The main difficulty that we have to overcome is the nonpositivity of the projections $P_j$. We tackle it by the construction of a Cantor set
where we control the behaviour of the wavelets. In Section \ref{sec:packing}, we turn to a detailed study of the packing dimension of the
sets $\mathcal E^+(\beta,f)$. Here too, we need to construct a Cantor set with special properties to be able to define a function $f\in B_p^{s,1}(\RR)$
such that $\dimp \big(\mathcal E^+(\beta,f)\big)>1-sp-\beta p$. The last section contains additional remarks.

\subsection{Notations}
We shall use the following notations. For $p\in [1,+\infty]$, $p^*$ denotes its conjugate exponent, $1/p+1/p^*=1$.
The letter $C$ will denote
a constant (which usually depends on the parameters $p,\ q,\ s,\ d$ and on the wavelets $\psi^{(i)}$, but does not depend  on the level $j$ of the projection),
whose value may change from line to line. To emphasize that $C$ depends on $A$, 
we occasionaly write $C_A$. 

\section{Preliminaries}\label{sec:preliminaries}
\subsection{Dyadic cubes}
We shall index wavelets using dyadic cubes. For $k=(k_1,\dots,k_d)\in\ZZ^d$ and $j\geq 0$, $\lambda=(j,k)$ will denote the dyadic cube of the $j$-th generation
$$\lambda=(j,k):=\left[\frac{k_1}{2^j},\frac{k_1+1}{2^j}\right)\times\cdots\times \left[\frac{k_d}{2^j},\frac{k_d+1}{2^j}\right).$$
We will index wavelets and wavelet coefficients by $(i,j,k)$ or by $(i,\lambda)$, writing indifferently $\psi_\lambda^{(i)}$ or $\psi_{j,k}^{(i)}$. Furthermore, $\Lambda_j$ will denote the set of dyadic cubes of the $j$-th generation.
Any element $x\in\RR^d$ belongs to a unique $\lambda\in\Lambda_j$ which we will denote by $\lambda_j(x)$.
We take for norm on $\RR^d$ the supremum norm, so that the diameter of a dyadic cube of $\Lambda_j$ is exactly $2^{-j}$.

\subsection{Besov and Sobolev spaces}

We shall use the following definition for Besov spaces. We start with a MRA with scaling function $\varphi$ and wavelet basis $(\psi_\lambda^{(i)})$. Let $f\in L^p(\RR^d)$ and define, for $k\in\ZZ^d$, $i\in\{1,\dots,2^d-1\}$ and $\lambda$ a dyadic cube,
$$C_k=\int_{\RR^d}\overline{\varphi(x-k)}f(x)dx,$$
$$c_\lambda^{(i)}=\int_{\RR^d}2^{dj}\overline{\psi_\lambda^{(i)}(x)}f(x)dx.$$
Then we say that $f$ belongs to the Besov space $B_p^{s,q}(\RR^d)$ ($s\geq 0$, $p\in(0,+\infty]$, $q\in (0,+\infty]$) if $(C_k)$ belongs to $\ell^p$ and if, setting for all $j\geq 1$
$$\veps_j=2^{\left(s-\frac dp\right)j}\left(\sum_i \sum_{\lambda\in\Lambda_j}|c_\lambda^{(i)}|^p\right)^{1/p}$$
then the sequence $(\veps_j)$ belongs to $\ell^q$ (we shall use the $L^\infty$ normalization for wavelets).
The norm of $f$ in $B_p^{s,q}(\RR^d)$ is then defined as the sum of the $\ell^p$-norm of $(c_k)$ and the $\ell^q$-norm of $(\veps_j)$. 
 When the wavelets are smooth enough, 
an assumption that we make throughout the paper, this definition matches the classical definition of Besov spaces (see \cite{Mey90}). We also observe that we immediately get that, for all $f\in B_p^{s,\infty}(\RR^d)$ and all $\lambda\in\Lambda_j$, $|c_\lambda^{(i)}|\leq C2^{\left(\frac dp-s\right)j}$. 

Besov and Sobolev spaces are very close. It is well known (see for instance \cite{BerLof}) that 
$$B_p^{s,1}(\RR^d)\subset W^{p,s}(\RR^d)\subset B_p^{s,\infty}(\RR^d),$$
where $W^{p,s}(\RR^d)$ stands for the usual Sobolev space. We shall use these inclusions by producing saturating functions in $B_p^{s,1}(\RR^d)$ and by estimating the dimension of the level sets 
for functions in $B_p^{s,\infty}(\RR^d)$.

%

\subsection{Wavelets}
Throughout this work, we shall assume that the wavelets have fast decay, namely that, for all $N\geq 0$, there exists
a constant $C_N>0$ such that, for all $i=1,\dots,2^d-1$ and all $x\in\mathbb R^d$, 
\begin{equation}\label{eq:fastdecaywavelets}
\left|\psi^{(i)}(x)\right|\leq \frac{C_N}{(1+\|x\|)^N}.
\end{equation}

We shall use several times the following lemmas, which are easy consequences of \eqref{eq:fastdecaywavelets}.

\begin{lemma}\label{lem:wavelet1}
 There exists $C>0$ such that, for all $j\in \mathbb N$ and all $x\in\mathbb R^d$, 
 $$\sum_i \sum_{\lambda\in\Lambda_j} |\psi_\lambda^{(i)}(x)|\leq C.$$
\end{lemma}

\begin{lemma}\label{lem:wavelet2}
 Let $\veps>0$ and $\kappa>0$. There exists $C_{\veps,\kappa}$ such that, for all $x\in\RR^d$, for all $j\in\NN$, 
 $$\sum_i \sum_{\substack{\lambda=(j,k); \\ \|2^j x-k\|\geq 2^{\veps j}}} |\psi_{\lambda}^{(i)}(x)|\leq C_{\veps,\kappa} 2^{-\kappa j}.$$
\end{lemma}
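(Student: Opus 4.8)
The plan is to feed the fast-decay hypothesis \eqref{eq:fastdecaywavelets} with a polynomial exponent $N$ that we are free to choose large, and to sort the lattice points $k$ entering the sum into dyadic shells around $2^j x$, so that the (polynomial) volume growth of the shells is beaten by the decay of the wavelet.

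First I would fix $x\in\RR^d$ and $j\in\NN$ and set $y=2^jx$. Recalling that with the $L^\infty$ normalization $\psi_\lambda^{(i)}(x)=\psi^{(i)}(2^jx-k)$ for $\lambda=(j,k)$, and that there are only $2^d-1$ values of $i$, it suffices to bound, for each fixed $i$ and each $N\geq 0$,
$$S:=\sum_{\substack{k\in\ZZ^d\\ \|y-k\|\geq 2^{\veps j}}}\bigl|\psi^{(i)}(y-k)\bigr|\leq C_N\sum_{\substack{k\in\ZZ^d\\ \|y-k\|\geq 2^{\veps j}}}\frac{1}{(1+\|y-k\|)^{N}},$$
where $C_N$ depends only on $N$ and the wavelets.

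Next I would decompose the range of summation into dyadic annuli. Put $m_0=\lfloor\veps j\rfloor$ and $A_m=\{k\in\ZZ^d;\ 2^m\leq\|y-k\|<2^{m+1}\}$; then $\{k;\ \|y-k\|\geq 2^{\veps j}\}\subset\bigcup_{m\geq m_0}A_m$. Since $\|\cdot\|$ is the supremum norm, $A_m$ sits inside a cube of side $2^{m+2}$, so $\card(A_m)\leq C_d\,2^{md}$; and on $A_m$ one has $(1+\|y-k\|)^{-N}\leq 2^{-mN}$. Therefore
$$S\leq C_N\sum_{m\geq m_0}\card(A_m)\,2^{-mN}\leq C_dC_N\sum_{m\geq m_0}2^{-m(N-d)}.$$

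Finally I would choose $N$. Imposing $N>d$, the geometric series converges and is at most $C\,2^{-m_0(N-d)}\leq C'\,2^{-\veps j(N-d)}$, where $C'$ absorbs the factor $2^{N-d}$ coming from $m_0\geq\veps j-1$ and the denominator $1-2^{-(N-d)}$. Now take $N=\lceil d+\kappa/\veps\rceil$, so that $\veps(N-d)\geq\kappa$ and hence $S\leq C'2^{-\kappa j}$ for all $j\in\NN$. Summing over the $2^d-1$ indices $i$ and renaming the constant gives $C_{\veps,\kappa}$, which by construction depends only on $\veps$, $\kappa$, $d$ and the wavelets (through the choice of $N$). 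There is no real obstacle here: the only point deserving a little care is the bookkeeping of the shell decomposition and the observation that, fast decay granting an arbitrarily large polynomial exponent, the volume growth $2^{md}$ of the shells is automatically out-run once $N>d+\kappa/\veps$.
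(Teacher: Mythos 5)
Your proof is correct. The paper's own argument exploits exactly the same mechanism --- the fast decay \eqref{eq:fastdecaywavelets} with a polynomial exponent $N$ chosen large compared with $d$ and $\kappa/\veps$ --- but organizes the summation differently: instead of counting lattice points in dyadic shells around $2^jx$, it splits the decay factor as $(1+\|2^jx-k\|)^{-N}=(1+\|2^jx-k\|)^{-N/2}\,(1+\|2^jx-k\|)^{-N/2}$, bounds one of the two halves by $2^{-\veps jN/2}$ using the constraint $\|2^jx-k\|\geq 2^{\veps j}$ (this produces the $2^{-\kappa j}$ as soon as $N\geq 2\kappa/\veps$), and sums the remaining half over all $k\in\ZZ^d$ by the same standard-calculus estimate that underlies Lemma \ref{lem:wavelet1}, which requires $N/2\geq d+1$; hence the paper's choice $N\geq\max(2d+2,2\kappa/\veps)$. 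Your annulus decomposition, with the count $\card(A_m)\leq C_d2^{md}$ and the geometric series in $m\geq\lfloor\veps j\rfloor$, is equally elementary and if anything slightly more explicit in the bookkeeping, showing that $N$ of order $d+\kappa/\veps$ already suffices; both arguments yield a constant depending only on $\veps$, $\kappa$, $d$ and the wavelets, uniformly in $x$ and $j$, as the statement requires.
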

\begin{proof}
Lemma \ref{lem:wavelet1} follows immediately from \eqref{eq:fastdecaywavelets} with $N\geq d+1$ and standard calculus. To prove Lemma \ref{lem:wavelet2}, we write for $\|2^j x-k\|\geq 2^{\veps j}$, 
$$|\psi_\lambda^{(i)}(x)|\leq \frac{C_N}{\left(1+\|2^j x-k\|\right)^{N/2}}2^{-\veps N/2}$$
and we choose $N\geq \max(2d+2,2\kappa/\veps)$.
\end{proof}

Lemma \ref{lem:wavelet1} in turn easily implies that, for all $f\in B_p^{s,\infty}(\RR^d)$, for all $x\in\RR^d$, for all $j\in\NN$, $|P_j f(x)|\leq C2^{\left(\frac dp-s\right)j}$ if $d-sp>0$ and $|R_jf(x)|\leq 2^{\left(\frac dp-s\right)j}$
if $d-sp<0$, which justifies the restriction $\beta\leq \frac dp-s$ in Theorems \ref{thm:main1} and \ref{thm:main2}. 
They are also useful to prove the following result vthat quantifies how close are $P_j f(x)$ and $P_j f(y)$ if $x$ and $y$ are close.
We define, for $l\geq 1$,
\[Q_l f(x)=\sum_{i=1}^{2^d-1}\sum_{k\in\ZZ^d}2^{dl}\langle f,\psi_{l,k}^{(i)}\rangle\psi_{l,k}^{(i)}(x)=\sum_{i=1}^{2^d-1}\sum_{\lambda\in\Lambda_l}c_\lambda^{(i)}\psi_\lambda^{(i)}(x).\]

\begin{lemma}\label{lem:wavelet3}
 Let $s\geq 0$, $p,q\in[1,+\infty]$, $\beta\in\RR$. There exist $C_\beta>0$ and $\theta>0$ such that, for all $f\in B_p^{s,q}(\RR^d)$, for all $j\in\NN$, 
 for all $x,y\in\RR^d$ with $\|x-y\|<2^{-\theta j}$, $|Q_j f(x)-Q_j f(y)|\leq C_\beta 2^{\beta j}\|f\|_{B_{p}^{s,q}}$.
\end{lemma}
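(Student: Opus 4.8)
The plan is to split the sum
$Q_j f(x)-Q_j f(y)=\sum_i\sum_{\lambda\in\Lambda_j}c_\lambda^{(i)}\bigl(\psi_\lambda^{(i)}(x)-\psi_\lambda^{(i)}(y)\bigr)$
according to whether the cube $\lambda=(j,k)$ is close to $x$ or far from it, and to estimate the two pieces by unrelated arguments: a crude term-by-term bound for the few close cubes, using that the wavelets are Lipschitz, and Lemma \ref{lem:wavelet2} for the far ones. Throughout one uses the a priori estimate $|c_\lambda^{(i)}|\leq C2^{(d/p-s)j}\|f\|_{B_p^{s,q}}$ recorded after the definition of Besov spaces (valid since $B_p^{s,q}(\RR^d)\subset B_p^{s,\infty}(\RR^d)$). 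One first disposes of small $j$: for $j$ below any fixed threshold $j_0$, Lemma \ref{lem:wavelet1} and the coefficient bound give $|Q_j f(x)-Q_j f(y)|\leq C2^{(d/p-s)j}\|f\|_{B_p^{s,q}}\leq C_\beta 2^{\beta j}\|f\|_{B_p^{s,q}}$ after enlarging $C_\beta$, so only $j\geq j_0$ matters.

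Fix $\theta>1$ (to be adjusted) and assume $\|x-y\|<2^{-\theta j}$ with $j\geq j_0$, so that $2^j\|x-y\|<2^{(1-\theta)j}\leq 1$. Split $\Lambda_j$ into $A=\{(j,k):\|2^jx-k\|<2^j\}$ and its complement $B$. For $(j,k)\in B$ one also has $\|2^jy-k\|\geq\|2^jx-k\|-1\geq 2^{j-1}\geq 2^{j/2}$ (for $j_0$ large), so Lemma \ref{lem:wavelet2} applied at $x$ with $\veps=1$ and at $y$ with $\veps=1/2$ gives, for any prescribed $\kappa>0$, $\sum_i\sum_{\lambda\in B}\bigl(|\psi_\lambda^{(i)}(x)|+|\psi_\lambda^{(i)}(y)|\bigr)\leq C_\kappa 2^{-\kappa j}$; hence the $B$-part of the sum is at most $C_\kappa 2^{(d/p-s-\kappa)j}\|f\|_{B_p^{s,q}}$. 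The set $A$ contains at most $C2^{dj}$ cubes (integer points $k$ in a box of side $\sim 2^j$), and for each of them the mean value inequality, together with the boundedness of $\nabla\psi^{(i)}$ (a consequence of the smoothness assumption on the wavelets), gives $|\psi_\lambda^{(i)}(x)-\psi_\lambda^{(i)}(y)|\leq C2^j\|x-y\|\leq C2^{(1-\theta)j}$; hence the $A$-part is at most $C2^{(2d+d/p-s-\theta)j}\|f\|_{B_p^{s,q}}$.

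It remains to fix the parameters: take $\kappa=\max(1,\,d/p-s-\beta)$ so that the $B$-part is $\leq C2^{\beta j}\|f\|_{B_p^{s,q}}$, take $\theta=\max(2,\,2d+d/p-s-\beta)$ so that the $A$-part is $\leq C2^{\beta j}\|f\|_{B_p^{s,q}}$ and $\theta>1$ holds, and then choose $j_0$ large enough for the elementary inequalities above; summing the two estimates and adjusting $C_\beta$ for $j<j_0$ finishes the proof. I do not anticipate a real obstacle here; the only points needing care are that the far cubes must be kept far from $y$ as well as from $x$ (which is why $\theta>1$ is imposed), the bookkeeping of exponents in the choice of $\kappa$ and $\theta$, and the harmless observation that one genuinely uses boundedness of the first derivatives of the wavelets, not merely the decay \eqref{eq:fastdecaywavelets} of the wavelets themselves.
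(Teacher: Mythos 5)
Your proof is correct and follows essentially the same route as the paper's: split $\Lambda_j$ into the $O(2^{dj})$ cubes near $x$ (handled by the Lipschitz bound on the wavelets and a large choice of $\theta$) and the far cubes (handled by Lemma \ref{lem:wavelet2}), with the coefficients controlled by the a priori bound $|c_\lambda^{(i)}|\leq C2^{(d/p-s)j}\|f\|$. The only cosmetic differences — using the sup bound on coefficients instead of Hölder followed by the $\ell^{p^*}\hookleftarrow\ell^1$ estimate, and defining the far set via $x$ alone and then deducing the condition at $y$ — do not change the argument.
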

\begin{proof}
Let $f\in B_p^{s,q}(\RR^d)$ and let $x,y\in\RR^d$.
 By H\"older's inequality,
 \begin{align*}
  |Q_j f(x)-Q_jf(y)|&\leq \|f\|2^{-\left(s-\frac dp\right)j}\left(\sum_i\sum_{\lambda\in \Lambda_j}|\psi_\lambda^{(i)}(x)-\psi_\lambda^{(i)}(y)|^{p^*}\right)^{1/p^*}\\
  &\leq \|f\|2^{-\left(s-\frac dp\right)j}\sum_i\sum_{\lambda\in \Lambda_j} |\psi_\lambda^{(i)}(x)-\psi_\lambda^{(i)}(y)|\\
  &\leq \|f\|2^{-\left(s-\frac dp\right)j}\sum_i\left(\sum_{\lambda\in\Gamma_1}|\psi_\lambda^{(i)}(x)-\psi_\lambda^{(i)}(y)|+\sum_{\lambda\in\Gamma_2}|\psi_\lambda^{(i)}(x)-\psi_\lambda^{(i)}(y)|\right)
 \end{align*}
where $\Gamma_1=\{\lambda=(j,k)\in\Lambda_j;\ \|2^j x-k\|\geq 2^j\textrm{ and }\|2^j y-k\|\geq 2^j\}$ and $\Gamma_2=\Lambda_j\backslash \Gamma_1$.
By Lemma \ref{lem:wavelet2}, there exists some $C_\beta>0$ such that, for all $x,y\in\RR^d$, 
$$2^{-\left(s-\frac dp\right)j}\sum_i\sum_{\lambda\in\Gamma_1}|\psi_\lambda^{(i)}(x)-\psi_\lambda^{(i)}(y)|\leq C_\beta 2^{\beta j}.$$
On the other hand, since $\textrm{card}(\Gamma_2)\leq C 2^{dj}$,
\begin{align*}
 2^{-\left(s-\frac dp\right)j}\sum_i\sum_{\lambda\in\Gamma_2}|\psi_\lambda^{(i)}(x)-\psi_\lambda^{(i)}(y)|&\leq  C 2^{-\left(s-\frac dp\right)j}2^{dj}2^j\|x-y\|\\
 &\leq C2^{-\left(s-\frac dp-d-1+\theta\right)j}
\end{align*}
provided $\|x-y\|\leq 2^{-\theta j}$. A choice of $\theta>0$ large enough allows us to conclude.

\end{proof}

%
%


\section{Upper bounds for the dimension}\label{sec:dimension}
\subsection{Hausdorff dimension}
Our aim in this subsection is to prove the following proposition, which does not require that the wavelets have compact support.

\begin{proposition}\label{prop:dimh}
 Let $\beta\in \left[-s,\frac dp-s\right]\backslash\{0\}$ and $f\in B_{p}^{s,\infty}(\RR^d)$. Then $\dimh\big(\mathcal E^{-}(\beta,f)\big)\leq d-sp-\beta p$.
\end{proposition}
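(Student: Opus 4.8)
The plan is to use the standard "size of the sets where a sum of coefficients is large" strategy, adapted to the wavelet setting, combined with the Besov regularity to get a summable bound at each scale. Let me distinguish the two cases $\beta>0$ and $\beta<0$, since $\mathcal E^-(\beta,f)$ is defined differently in each.

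\emph{Case $0<\beta\leq \frac dp-s$.} Fix $\gamma\in(0,\beta)$. A point $x\in\mathcal E^-(\beta,f)$ satisfies $|P_jf(x)|\geq 2^{\gamma j}$ for infinitely many $j$. The first step is to localize: by Lemma \ref{lem:wavelet2}, the contribution to $P_jf(x)$ (equivalently, to the $Q_lf$ for $l<j$) coming from wavelets $\psi_\lambda^{(i)}$ whose index cube $\lambda=(l,k)$ is far from $x$ (say $\|2^lx-k\|\geq 2^{\veps l}$) is at most $C2^{(\frac dp-s)l}2^{-\kappa l}$, which is negligible. Hence if $|P_jf(x)|$ is large, one of the "close" coefficients $c_\lambda^{(i)}$ with $\lambda\ni$ (a small neighborhood of) $x$ and $l\le j$ must itself be of size $\gtrsim 2^{\gamma' l}$ for some $\gamma'$ slightly below $\gamma$; here I use the $L^\infty$-normalization $|\psi_\lambda^{(i)}(x)|\le C$ of Lemma \ref{lem:wavelet1} and the a priori bound $|c_\lambda^{(i)}|\le C2^{(\frac dp-s)l}$. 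This reduces matters to controlling the set of $x$ close to infinitely many cubes $\lambda$ carrying a large coefficient. Next, count such cubes: since $f\in B_p^{s,\infty}$, at scale $l$ we have $\sum_i\sum_{\lambda\in\Lambda_l}|c_\lambda^{(i)}|^p\le C2^{(\frac dp-s)pl}$, so the number $N_l$ of cubes $\lambda\in\Lambda_l$ with some $|c_\lambda^{(i)}|\ge 2^{\gamma' l}$ satisfies $N_l\le C2^{(\frac dp-s)pl}2^{-\gamma' pl} = C2^{(d-sp-\gamma' p)l}$. These cubes (slightly fattened) give, for each $J$, a cover of $\mathcal E^-(\beta,f)$ by $\sum_{l\ge J}N_l$ sets of diameter $\asymp 2^{-l}$. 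Therefore, for any $t>d-sp-\gamma' p$, the $t$-dimensional Hausdorff sum $\sum_{l\ge J}N_l\,2^{-tl}$ tends to $0$ as $J\to\infty$, giving $\dimh(\mathcal E^-(\beta,f))\le d-sp-\gamma' p$. Letting $\gamma'\uparrow\beta$ yields the claim.

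\emph{Case $-s\leq\beta<0$.} Now $x\in\mathcal E^-(\beta,f)$ means $P_jf(x)$ converges and $|R_jf(x)|\ge 2^{\gamma j}$ for infinitely many $j$, where $\beta<\gamma<0$. Write $R_jf(x)=\sum_{l\ge j}Q_lf(x)$. The localization step is the same (discard far cubes via Lemma \ref{lem:wavelet2}), and since $2^{\gamma j}$ decays, a point with $|R_jf(x)|$ that large for some $j$ must be close to a cube $\lambda\in\Lambda_l$ with $l\ge j$ and $|c_\lambda^{(i)}|\gtrsim 2^{\gamma' l}$ (again for $\gamma'$ slightly below $\gamma$; one absorbs the geometric series $\sum_{l\ge j}2^{-\delta l}$ into the constant). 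The counting bound $N_l\le C2^{(d-sp-\gamma' p)l}$ from Besov membership is identical, and the same Hausdorff-sum estimate gives $\dimh(\mathcal E^-(\beta,f))\le d-sp-\beta p$ after letting $\gamma'\uparrow\beta$. Note $\beta\le\frac dp-s$ is needed precisely so that the exponent $d-sp-\beta p\ge 0$ is a meaningful bound.

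\emph{Main obstacle.} The delicate point is the localization/reduction step: passing from "$|P_jf(x)|$ (or $|R_jf(x)|$) is large" to "some individual coefficient $c_\lambda^{(i)}$ at scale $\le j$ (resp. $\ge j$) near $x$ is large." Unlike the Haar case, the projections are not positive, so one cannot simply bound $|P_jf(x)|$ below by one term; instead one must argue that if \emph{all} nearby coefficients at all relevant scales were $\le 2^{\gamma' l}$ in modulus, then summing against $\sum_i\sum_{\lambda}|\psi_\lambda^{(i)}(x)|\le C$ (Lemma \ref{lem:wavelet1}) together with the far-cube tail bound (Lemma \ref{lem:wavelet2}) would force $|P_jf(x)|=o(2^{\gamma j})$ — but this requires the number of scales $\le j$ to not spoil the estimate, which is why one works with $\gamma'$ strictly between $\gamma$ and $\beta$ and tracks the loss carefully. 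Everything else is the routine covering argument for Hausdorff dimension.
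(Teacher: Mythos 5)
Your proof is correct and follows essentially the same route as the paper's: localize via the fast-decay lemma, count the cubes at scale $l$ carrying a coefficient of size $\ge 2^{\gamma l}$ using the Besov bound $\card(\Gamma_{l,\gamma})\le C2^{(d-sp-\gamma p)l}$, and cover $\mathcal E^-(\beta,f)$ by the limsup of these (fattened) cubes --- the paper writes this out in detail for $\beta<0$ and, for $\beta>0$, simply invokes the inclusion $\mathcal E^-(\beta,f)\subset\mathcal F^-(\gamma,f)$ together with the Esser--Jaffard theorem, whose proof is the same argument. One slip to fix: in the case $\beta<0$ you write ``$|R_jf(x)|\ge 2^{\gamma j}$ for infinitely many $j$, where $\beta<\gamma<0$'', but the auxiliary exponent must satisfy $\gamma<\beta$ (as your later ``letting $\gamma'\uparrow\beta$'' confirms), since $\limsup_j \log|R_jf(x)|/(j\log 2)\ge\beta$ only guarantees $|R_jf(x)|\ge 2^{\gamma j}$ infinitely often when $\gamma<\beta$.
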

\begin{proof}
 We first observe that in the case $\beta> 0$, this is already known. This follows indeed from the inclusion $\mathcal E^{-}(\beta,f)\subset \mathcal F^{-}(\gamma,f)$ for
 all $\gamma<\beta$ and from the corresponding result of Esser and Jaffard. For $\beta\in\left(-s,\min\left(0,\frac dp-s\right)\right)$ (the result is trivial if $\beta=-s$),
 the inclusion is reversed and we need to provide a proof (inspired by that of \cite{EJ17}).
 Let $\gamma\in(-s,\beta)$ and $\veps>0$. For $j\in\mathbb N$, we define
 \begin{eqnarray*}
  \Gamma_{j,\gamma}&=&\left\{\lambda\in\Lambda_j;\ \exists i,\ |c_\lambda^{(i)}|\geq 2^{\gamma j}\right\}\\
  E_{j,\gamma,\veps}&=&\bigcup_{\lambda\in\Gamma_{j,\gamma}} \lambda+B\big(0,2^{-(1-\veps)j}\big)\\
  E_{\gamma,\veps}&=&\limsup_{j\to+\infty}E_{j,\gamma,\veps}.
 \end{eqnarray*}
 Since $f$ belongs to $B_p^{s,\infty}(\RR^d)$, the cardinal number of $\Gamma_{j,\gamma}$ is less than $C2^{(d-sp-\gamma p)j}$. Thus, $E_{j,\gamma,\veps}$ 
 is composed of at most $C 2^{(d-sp-\gamma p)j}$ cubes of width $C 2^{-(1-\veps)j}$. Using these cubes for $j$ large as a covering of $E_{\gamma,\veps}$ yields
 $$\dimh\big(E_{\gamma,\veps}\big)\leq\frac{d-sp-\gamma p}{1-\veps}.$$
 Letting $\gamma$ to $\beta$ and $\veps$ to 0, we get the conclusion if we prove that $\mathcal E^{-}(\beta,f)\subset E_{\gamma,\veps}$.
 Therefore, assume that $x\notin E_{\gamma,\veps}$. Let $J\in\NN$ be such that,
 for all $j\geq J$, $x\notin E_{j,\gamma,\veps}$. For $j\geq J$ one may write
 $$|R_j f(x)|\leq\sum_{l\geq j}\left(\sum_{\lambda\in \Lambda_l\backslash \Gamma_{l,\gamma}}\sum_i|c_\lambda^{(i)}|\cdot|\psi_\lambda^{(i)}(x)|+
 \sum_{\lambda\in \Gamma_{l,\gamma}}\sum_i|c_\lambda^{(i)}|\cdot|\psi_\lambda^{(i)}(x)|\right).$$
 Now, let $\lambda=(l,k)\in \Gamma_{l,\gamma}$. Since $x\notin E_{l,\gamma,\veps}$, $\|2^l x-k\|\geq 2^{\veps l}$. Moreover, $|c_\lambda^{(i)}|\leq C 2^{\left(\frac dp-s\right)l}$.
 Using Lemma \ref{lem:wavelet2}
 with a sufficiently large $\kappa$, we get
 $$\sum_{l\geq j}\sum_{\lambda\in \Gamma_{l,\gamma}}\sum_i |c_\lambda^{(i)}|\cdot |\psi_\lambda^{(i)}(x)|\leq C 2^{\gamma j}.$$
Furthermore, 
\begin{eqnarray*}
 \sum_{l\geq j}\sum_{\lambda\in\Lambda_l\backslash \Gamma_{l,\gamma}}\sum_i |c_\lambda^{(i)}|\cdot|\psi_\lambda^{(i)}(x)|&\leq&\sum_{l\geq j}\sum_{\lambda\in\Lambda_l}\sum_i2^{\gamma l}|\psi_\lambda^{(i)}(x)| \\
 &\leq&C2^{\gamma j}
\end{eqnarray*}
by Lemma \ref{lem:wavelet1}. Hence $x\notin\mathcal E^{-}(\beta,f)$.
\end{proof}

A small variant of the above proof implies the following result, which will be needed later.
\begin{proposition}\label{prop:dimhdv}
 Let $f\in B_p^{s,\infty}(\RR^d)$. Then 
 $$\dimh\big(\big\{x\in\RR^d;\ (P_j f(x))\textrm{ diverges}\big\}\big)\leq d-sp.$$
\end{proposition}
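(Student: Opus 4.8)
The plan is to mimic the proof of Proposition~\ref{prop:dimh} almost verbatim, pushing the threshold exponent up to (slightly below) $0$. First I would dispose of the trivial cases: when $d-sp<0$, Lemma~\ref{lem:wavelet1} together with $|c_\lambda^{(i)}|\leq C2^{(d/p-s)j}$ gives $|Q_jf(x)|\leq C2^{(d/p-s)j}$ for every $x$, so $\sum_jQ_jf(x)$ converges everywhere and the divergence set is empty; and when $s=0$ the bound $\dimh\leq d$ is automatic since we work in $\RR^d$. So one may assume $s>0$ and $d-sp\geq 0$.

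Next I would fix $\gamma\in(-s,0)$ and $\veps>0$ and introduce, exactly as in Proposition~\ref{prop:dimh},
\[\Gamma_{j,\gamma}=\left\{\lambda\in\Lambda_j;\ \exists i,\ |c_\lambda^{(i)}|\geq 2^{\gamma j}\right\},\qquad E_{j,\gamma,\veps}=\bigcup_{\lambda\in\Gamma_{j,\gamma}}\lambda+B\big(0,2^{-(1-\veps)j}\big),\qquad E_{\gamma,\veps}=\limsup_{j\to+\infty}E_{j,\gamma,\veps}.\]
Since $f\in B_p^{s,\infty}(\RR^d)$, the constraint $\gamma>-s$ guarantees $\card(\Gamma_{j,\gamma})\leq C2^{(d-sp-\gamma p)j}$ with $d-sp-\gamma p\in[0,d]$, so covering $E_{\gamma,\veps}$ by the cubes composing $E_{j,\gamma,\veps}$ for $j$ large yields $\dimh(E_{\gamma,\veps})\leq\frac{d-sp-\gamma p}{1-\veps}$.

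It then remains to check that $(P_jf(x))$ converges as soon as $x\notin E_{\gamma,\veps}$. Since $(P_jf(x))$ diverges exactly when the series $\sum_j Q_jf(x)$ does not converge, and $\gamma<0$, it is enough to show that $|Q_jf(x)|\leq C2^{\gamma j}$ for all large $j$. For this I would split $Q_jf(x)$ into the sum over $\Lambda_j\backslash\Gamma_{j,\gamma}$, bounded using $|c_\lambda^{(i)}|<2^{\gamma j}$ and Lemma~\ref{lem:wavelet1}, and the sum over $\Gamma_{j,\gamma}$, where $x\notin E_{j,\gamma,\veps}$ forces $\|2^jx-k\|\geq 2^{\veps j}$ for every $\lambda=(j,k)$, so that Lemma~\ref{lem:wavelet2} with $\kappa$ large together with $|c_\lambda^{(i)}|\leq C2^{(d/p-s)j}$ bounds it by $C2^{(d/p-s-\kappa)j}\leq C2^{\gamma j}$. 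This is word for word the argument in Proposition~\ref{prop:dimh}. Hence $\{x\in\RR^d;\ (P_jf(x))\text{ diverges}\}\subset E_{\gamma,\veps}$, and letting $\gamma\to 0^-$ and then $\veps\to 0^+$ gives $\dimh\leq d-sp$. I expect no genuine obstacle beyond the previous proposition; the only point requiring care is to keep $\gamma$ strictly between $-s$ and $0$, so that the cardinality estimate for $\Gamma_{j,\gamma}$ remains meaningful while $2^{\gamma j}$ stays summable.
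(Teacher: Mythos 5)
Your proposal is correct and is essentially the paper's own argument: the paper proves Proposition \ref{prop:dimhdv} by the one-line observation that, with the notation of Proposition \ref{prop:dimh}, $(P_jf(x))$ converges whenever $x\notin E_{\gamma,\veps}$ for $\gamma<0$, and then lets $\gamma\to 0^-$ and $\veps\to 0^+$. Your more detailed write-up (including the preliminary reduction to $s>0$, $d-sp\geq 0$, and the explicit bound $|Q_jf(x)|\leq C2^{\gamma j}$ off $E_{\gamma,\veps}$) fills in exactly the steps the paper leaves implicit.
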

\begin{proof}
 Keeping the same notation, it suffices to observe that, for any $\gamma<0$ and any $\veps>0$, $(P_jf(x))$ converges provided $x\notin E_{\gamma,\veps}$.
\end{proof}

\subsection{Packing dimension}\ 
 We now prove the statement about the packing dimension (again, our proof does not require that the wavelets are compactly supported).
\begin{proposition}\label{prop:packing1}
 Let $\beta\in \left[-s,\frac dp-s\right]\backslash\{0\}$ and $f\in B_p^{s,\infty}(\RR^d)$. Then $\dimp\big( E(\beta,f)\big)\leq d-sp-\beta p$.
\end{proposition}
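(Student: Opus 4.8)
The plan is to use the standard fact that $\dimp(E)\leq t$ whenever $E=\bigcup_{n}E_n$ with $\dboxsup(E_n)\leq t$ for all $n$ (packing dimension is countably stable and dominated on bounded sets by upper box dimension). It is thus enough to produce, for every quadruple $\gamma'<\gamma_1<\beta<\gamma_2$ and every $\veps>0$, a countable cover of $E(\beta,f)$ by bounded sets whose upper box dimension is at most $\frac{1}{\mu}\cdot\frac{d-sp-\gamma'p}{1-\veps}$ for some $\mu\in(0,1)$ that may be taken arbitrarily close to $\gamma_1/\gamma_2$; letting the parameters tend to $\beta$ (so $\gamma_1/\gamma_2\to 1$, $\mu\to 1$) and $\veps\to 0$ then gives $\dimp\big(E(\beta,f)\big)\leq d-sp-\beta p$. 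I will write the proof for $\beta>0$ (so all four parameters may be chosen positive); the case $\beta<0$ is identical after replacing $P_j$ by the remainder $R_j$ and the ratio $\gamma_1/\gamma_2\in(0,1)$ by $\gamma_1/\gamma_2>1$ (all parameters then negative).

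Keep from the proof of Proposition \ref{prop:dimh} the sets $\Gamma_{l,\gamma'}=\{\lambda\in\Lambda_l:\exists i,\ |c_\lambda^{(i)}|\geq 2^{\gamma' l}\}$ and $E_{l,\gamma',\veps}=\bigcup_{\lambda\in\Gamma_{l,\gamma'}}\big(\lambda+B(0,2^{-(1-\veps)l})\big)$; recall $\card(\Gamma_{l,\gamma'})\leq C2^{(d-sp-\gamma'p)l}$ (so each $E_{l,\gamma',\veps}$ is a \emph{finite} union of balls of radius $2^{-(1-\veps)l}$) and $|c_\lambda^{(i)}|\leq C2^{(d/p-s)l}$. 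Fix $\mu\in(0,\gamma_1/\gamma_2)$ and set $c=1/\mu>1$. The key step is the following \textbf{localisation property}: for every $x\in E(\beta,f)$ there is $J(x)$ such that, for all $j\geq J(x)$, one has $x\in E_{l,\gamma',\veps}$ for some integer $l\in[\mu j,j)$. Indeed, pick $J(x)$ so large that $2^{\gamma_1 j}\leq|P_jf(x)|$ and $2^{\gamma_2\lceil\mu j\rceil}\leq\tfrac12 2^{\gamma_1 j}$ for $j\geq J(x)$ (the latter because $\gamma_2\mu<\gamma_1$). Then
\[\Big|\sum_{\lceil\mu j\rceil\leq l<j}Q_lf(x)\Big|=\big|P_jf(x)-P_{\lceil\mu j\rceil}f(x)\big|\geq|P_jf(x)|-|P_{\lceil\mu j\rceil}f(x)|\geq\tfrac12\,2^{\gamma_1 j}.\]
On the other hand, split each $Q_lf(x)$ into the part coming from $\Lambda_l\setminus\Gamma_{l,\gamma'}$, which is $\leq 2^{\gamma'l}\sum_i\sum_{\lambda\in\Lambda_l}|\psi_\lambda^{(i)}(x)|\leq C2^{\gamma'l}$ by Lemma \ref{lem:wavelet1}, and the part coming from $\Gamma_{l,\gamma'}$; if $x\notin E_{l,\gamma',\veps}$ then every $(l,k)\in\Gamma_{l,\gamma'}$ satisfies $\|2^lx-k\|\geq 2^{\veps l}$, so using $|c_\lambda^{(i)}|\leq C2^{(d/p-s)l}$ and Lemma \ref{lem:wavelet2} this part is $\leq C_{\veps,\kappa}2^{(d/p-s-\kappa)l}$. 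Choosing $\gamma'\in(0,\gamma_1)$ and $\kappa>d/p-s$ and summing over $l$, if $x$ belonged to \emph{no} $E_{l,\gamma',\veps}$ with $\lceil\mu j\rceil\leq l<j$ we would get $\big|\sum_{\lceil\mu j\rceil\leq l<j}Q_lf(x)\big|\leq C2^{\gamma'j}+C<\tfrac12 2^{\gamma_1 j}$ for $j$ large, contradicting the previous display. This proves the localisation property.

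It remains to convert this into a box-counting bound. Call $l$ \emph{active} for $x$ if $x\in E_{l,\gamma',\veps}$; the localisation property says that for $j$ large the interval $[\mu j,j)$ meets the set of active levels of $x$, hence $x$ has infinitely many active levels and two consecutive ones $l<l'$ satisfy $l'\leq cl+2$ (take $j$ just above $l/\mu$). Fix a large scale $2^{-m}$ and let $L=L(x,m)$ be the smallest active level of $x$ with $(1-\veps)L\geq m$; comparing with the largest active level $<m/(1-\veps)$ one gets $L< cm/(1-\veps)+2$, and $x$ lies at distance $\leq 2^{-(1-\veps)L}\leq 2^{-m}$ from a cube of $\Gamma_{L,\gamma'}$. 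Hence, for $m$ large,
\[x\in\mathcal H_m:=\bigcup_{\lceil m/(1-\veps)\rceil\leq L<cm/(1-\veps)+2}E_{L,\gamma',\veps},\]
a finite union of at most $\sum_L\card(\Gamma_{L,\gamma'})\leq C\,2^{(d-sp-\gamma'p)cm/(1-\veps)}$ balls of radius $\leq 2^{-m}$, so $N_{2^{-m}}(\mathcal H_m)\leq C\,2^{(d-sp-\gamma'p)cm/(1-\veps)}$. Writing $E(\beta,f)\subseteq\bigcup_{M\in\NN}A_M$ with $A_M=\{x:\ x\in\mathcal H_m\text{ for all }m\geq M\}$, each $A_M$ is bounded (it lies in $\mathcal H_M$) and satisfies $N_{2^{-m}}(A_M)\leq N_{2^{-m}}(\mathcal H_m)$ for $m\geq M$, hence $\dboxsup(A_M)\leq c(d-sp-\gamma'p)/(1-\veps)$. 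Therefore $\dimp\big(E(\beta,f)\big)\leq c(d-sp-\gamma'p)/(1-\veps)$, and letting $\gamma',\gamma_1,\gamma_2\to\beta$ (which lets $\mu\to1$, $c\to1$) and $\veps\to 0$ gives the result.

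The delicate point — and the reason the statement concerns $E(\beta,f)$ rather than $\mathcal E^+(\beta,f)$ — is the localisation property with the \emph{narrow} window $[\mu j,j)$, $\mu$ close to $1$: it rests on the upper bound $|P_jf(x)|\leq 2^{\gamma_2 j}$, which forces the ``new mass'' $P_jf(x)-P_{\lceil\mu j\rceil}f(x)$ to be of size $\gtrsim 2^{\gamma_1 j}$. With only a lower bound available (as for $\mathcal E^+$) one would instead use the universal estimate $|P_{j'}f(x)|\leq C2^{(d/p-s)j'}$, which gives only the wide window $[\gamma_1 p(d-sp)^{-1}j,\,j)$ and merely $\dimp\leq(d-sp-\beta p)\cdot\frac{d-sp}{\beta p}$ — in line with the failure of the stronger inequality recorded in Theorem \ref{thm:badpacking}. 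The remaining verifications are routine: the scaling part $\sum_k C_k\varphi(x-k)$ of $P_jf$ is bounded uniformly in $x$ (since $(C_k)\in\ell^p\subset\ell^\infty$ and $\sum_k|\varphi(x-k)|\leq C$), $P_jf(x)-P_{\lceil\mu j\rceil}f(x)=\sum_{\lceil\mu j\rceil\leq l<j}Q_lf(x)$, and $\dboxsup$ may be computed along the dyadic scales $2^{-m}$.
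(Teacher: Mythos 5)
Your proof is correct, and it takes a genuinely different route from the paper's. The first move is the same in spirit as the paper's Lemma \ref{lem:packing0}: the two-sided control coming from the existence of the limit forces the increment $P_jf(x)-P_{\lceil\mu j\rceil}f(x)=\sum_{\lceil\mu j\rceil\le l<j}Q_lf(x)$ (resp.\ the increment of $R_j$) to be of size $\gtrsim 2^{\gamma_1 j}$ over a window of ratio close to $1$. After that the two arguments diverge. The paper extracts a single large $Q_lf(x)$, converts it via H\"older into a lower bound on the localized norm $\|f\|_{l,\lambda_j(x),\veps}$ (Lemma \ref{lem:packing1}), and then bounds the number of cubes $N_j$ by summing the $p$-th powers of these localized norms over all cubes of $\Lambda_j$, using the bounded-overlap property of the families $\Lambda_{l,\lambda_0,\veps}$. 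You instead recycle the sets $E_{l,\gamma',\veps}$ from the Hausdorff bound (Proposition \ref{prop:dimh}): since the contribution of coefficients below $2^{\gamma' l}$ and of large coefficients located at distance $\ge 2^{-(1-\veps)l}$ from $x$ is negligible, $x$ must belong to some $E_{l,\gamma',\veps}$ with $l$ in the window; the resulting ``consecutive active levels have ratio $\le c$'' property then yields, at every dyadic scale $2^{-m}$, a covering of $A_M$ by the $O\bigl(2^{(d-sp-\gamma'p)cm/(1-\veps)}\bigr)$ fattened cubes coming from levels $L\in[m/(1-\veps),cm/(1-\veps)+2)$, and countable stability of $\dimp$ finishes the proof. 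Your version is more elementary (no H\"older, no localized norms, no overlap counting) and it unifies the Hausdorff and packing upper bounds around the same covering sets; it also isolates cleanly why the two-sided control is indispensable, consistently with Theorem \ref{thm:badpacking}. The price is that you use only one large coefficient per active level rather than the full local $\ell^p$ mass, but the cardinality bound $\mathrm{card}(\Gamma_{l,\gamma'})\le C2^{(d-sp-\gamma'p)l}$ already carries the needed information, so nothing is lost for this statement. Two small points you should make explicit in a final write-up: the case $\beta=-s$ is trivial since the claimed bound is then $d$, and the $\beta<0$ case, which you only sketch, does require rechecking that the geometric sums $\sum_l 2^{\gamma'l}$ over the window are dominated by their first term and that $\gamma'$ is taken strictly below the exponent $\beta-\delta$ appearing in the lower bound for $|R_jf(x)|$.
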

We need to introduce some notations. For $\lambda_0\in\Lambda$, $\veps>0$ and $l\in\mathbb N$, we denote
$$\Lambda_{l,\lambda_0,\veps}=\left\{\lambda\in\Lambda_l;\ \big(\lambda+B(0,2^{-(1-\veps)l})\big)\cap\lambda_0\neq\varnothing\right\}.$$
It is not difficult to observe that, $j,l,\veps$ being kept fixed, any $\lambda\in\Lambda_l$ belongs to at most $C_d2^{d(j-l)+\veps dl}$ different sets $\Lambda_{l,\lambda_0,\veps}$
for $\lambda_0$ describing $\Lambda_j$ and that, for a fixed $\lambda_0\in\Lambda_j$, 
$$\textrm{card}(\Lambda_{l,\lambda_0,\veps})\leq C_d\left(2^{\veps dl}+2^{d(l-j)}\right).$$
The cubes which are not in $\Lambda_{l,\lambda_0,\veps}$ are cubes with few interaction with $\lambda_0$. In particular, if $x\in\lambda_0$ and $\lambda=(l,k)\notin\Lambda_{l,\lambda_0,\veps}$, then
\begin{equation}
 \|2^l x-k\|\geq 2^{\veps l}. \label{eq:packing1}
\end{equation}
Let us also set
\begin{eqnarray*}
 \|f\|_{l,\lambda_0,\veps}&=&\left(\sum_i\sum_{\lambda\in\Lambda_{l,\lambda_0,\veps}} |c_\lambda^{(i)} 2^{\left(s-\frac dp\right)l}|^p\right)^{1/p}\\
 \|f\|_{l}&=&\left(\sum_i\sum_{\lambda\in\Lambda_{l}} |c_\lambda^{(i)} 2^{\left(s-\frac dp\right)l}|^p\right)^{1/p}\\
\end{eqnarray*}
It follows from the above discussion that, for all $\lambda_0\in\Lambda_j$ and for all $l\in\mathbb N$, 
$$\left(\sum_{\lambda_0\in \Lambda_j}\|f\|_{l,\lambda_0,\veps}^p\right)^{1/p}\leq  C_d 2^{d(j-l)+\veps dl} \|f\|_l.$$

Our starting point is to say that if we control the behaviour of all $P_jf(x)$ or all $R_j f(x)$,
then we control the behaviour of at least one $Q_l f(x)$ for $l$ close to $j$.

\begin{lemma}\label{lem:packing0}
 Let $\beta\in\RR\backslash\{0\}$, $\veps\in (0,1)$ and $f\in B_p^{s,\infty}(\RR^d)$. Then 
 $$E(\beta,f)\subset\left\{x\in\RR^d;\ \liminf_{j\to+\infty} \sup_{l\in[(1-\veps)j,(1+\veps)j]}\frac{\log |Q_lf(x)|}{j\log 2}\geq\beta\right\}.$$
\end{lemma}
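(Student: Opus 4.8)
The plan is to argue by contradiction, using the telescoping identities $Q_lf(x)=P_{l+1}f(x)-P_lf(x)$ (always valid) and $Q_lf(x)=R_lf(x)-R_{l+1}f(x)$ (valid wherever the expansion of $f$ converges), which give, for integers $a<b$,
\[P_bf(x)-P_af(x)=\sum_{l=a}^{b-1}Q_lf(x),\qquad R_af(x)-R_bf(x)=\sum_{l=a}^{b-1}Q_lf(x).\]
So I would fix $x\in E(\beta,f)$ and suppose that the $\liminf$ in the statement is strictly smaller than $\beta$; then there are $\delta>0$ and integers $j_n\to+\infty$ such that $|Q_lf(x)|\le 2^{(\beta-\delta)j_n}$ for every $l\in[(1-\veps)j_n,(1+\veps)j_n]$. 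Setting $a_n=\lceil(1-\veps)j_n\rceil$ and $b_n=\lfloor(1+\veps)j_n\rfloor$, the telescoping sum from $a_n$ to $b_n$ then has at most $2\veps j_n+1$ terms, each of modulus $\le 2^{(\beta-\delta)j_n}$, hence is bounded by $Cj_n2^{(\beta-\delta)j_n}$.

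For $\beta>0$ I would exploit the \emph{outer} scale $b_n$. Since $\lim_j\frac{\log|P_jf(x)|}{j\log2}=\beta$, for any small $\eta>0$ and $n$ large one has $|P_{b_n}f(x)|\ge 2^{(\beta-\eta)b_n}$ and $|P_{a_n}f(x)|\le 2^{(\beta+\eta)a_n}$, so
\[2^{(\beta-\eta)b_n}\le |P_{a_n}f(x)|+Cj_n2^{(\beta-\delta)j_n}\le 2^{(\beta+\eta)a_n}+Cj_n2^{(\beta-\delta)j_n}.\]
Using $a_n\le(1-\veps)j_n+1$ and $b_n\ge(1+\veps)j_n-1$, the left side grows (up to a constant) at least like $2^{(\beta-\eta)(1+\veps)j_n}$, while the right side is $O\bigl(j_n2^{\max((\beta+\eta)(1-\veps),\,\beta-\delta)j_n}\bigr)$. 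At $\eta=0$ the left-hand exponent $\beta(1+\veps)$ strictly exceeds both $\beta(1-\veps)$ and $\beta-\delta$, so by continuity the same strict inequalities persist for $\eta>0$ small; then the displayed inequality fails for $n$ large, a contradiction.

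For $\beta<0$ the remainders decay, so the roles of the endpoints reverse and the \emph{inner} scale $a_n$ carries the large term. Using $\lim_l\frac{\log|R_lf(x)|}{l\log2}=\beta$ (which makes sense because $P_jf(x)$ converges), for $\eta>0$ small and $n$ large one has $|R_{a_n}f(x)|\ge 2^{(\beta-\eta)a_n}$ and $|R_{b_n}f(x)|\le 2^{(\beta+\eta)b_n}$ (both exponents negative), whence $2^{(\beta-\eta)a_n}\le 2^{(\beta+\eta)b_n}+Cj_n2^{(\beta-\delta)j_n}$. Comparing exponents as before: at $\eta=0$ the left-hand exponent $\beta(1-\veps)$ strictly exceeds $\beta(1+\veps)$ (since $\beta<0$) and $\beta-\delta$, and by continuity this survives for $\eta>0$ small, again yielding a contradiction.

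The only point that really needs care is this dichotomy: growth ($\beta>0$) forces one to work at the outermost scale $b_n$, while decay ($\beta<0$) forces the innermost scale $a_n$, and in the second case one must keep track of the signs of the exponents $\beta\pm\eta$ when turning the $\log$-limits into two-sided bounds. The integer-part replacements $a_n,b_n$, the polynomial factor $j_n$, and the ($\eta\to0$)-continuity argument are all routine.
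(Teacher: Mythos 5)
Your proof is correct and is essentially the paper's argument in contrapositive form: the paper directly bounds $|P_{\lfloor(1+\veps)j\rfloor}f(x)-P_{\lfloor(1-\veps)j\rfloor}f(x)|$ (resp.\ the difference of remainders for $\beta<0$) from below and divides by the number $2\veps j+1$ of terms $Q_l$, while you telescope the same difference and derive a contradiction from the assumption that all these $Q_l$ are small along a subsequence. The choice of scales (outer for $\beta>0$, inner for $\beta<0$) and the exponent comparisons with small $\eta$ (the paper's $\delta$) match the paper's conditions exactly.
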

\begin{proof}
 We first assume $\beta>0$. Let $\delta>0$ and pick $x\in E(\beta,f)$. Then, provided $j$ is large enough, we have simultaneously
 \begin{eqnarray*}
  |P_{\lfloor j(1+\veps)\rfloor}f(x)|&\geq& 2^{(1+\veps)(\beta-\delta)j}\\
|P_{\lfloor j(1-\veps)\rfloor}f(x)|&\leq& 2^{(1-\veps)(\beta+\delta)j}.
 \end{eqnarray*}
Hence, for $j$ large enough,
$$\left|P_{\lfloor j(1+\veps)\rfloor}f(x)-P_{\lfloor j(1-\veps)\rfloor}f(x)\right|\geq \frac{1}2 2^{(1+\veps)(\beta-\delta)j}\geq 2^{\beta j}$$
provided
$$(1+\veps)(\beta-\delta)>(1-\veps)(\beta+\delta)\textrm{ and }(1+\veps)(\beta-\delta)>\beta.$$
Both conditions are satisfied if $\delta$ is sufficiently close to $0$. Since
$$\sup_{l\in[(1-\veps)j,(1+\veps)j]} |Q_lf(x)|\geq\frac{1}{2\veps j+1}\left|P_{\lfloor j(1+\veps)\rfloor}f(x)-P_{\lfloor j(1-\veps)\rfloor}f(x)\right|$$
we get the conclusion. The proof for $\beta<0$ is similar, but working now with $R_j$ instead of $P_j$. Indeed, provided $j$ is large enough,
we have simultaneously 
\begin{eqnarray*}
  |R_{\lfloor j(1+\veps)\rfloor}f(x)|&\leq& 2^{(1+\veps)(\beta+\delta)j}\\
|R_{\lfloor j(1-\veps)\rfloor}f(x)|&\geq& 2^{(1-\veps)(\beta-\delta)j}
 \end{eqnarray*}
 and we choose $\delta>0$ such that
$$(1+\veps)(\beta+\delta)<(1-\veps)(\beta-\delta)\textrm{ and }(1-\veps)(\beta-\delta)>\beta.$$
\end{proof}

The next lemma is crucial. It essentially says that if $|Q_lf(x)|$ is large, then the localized norm $\|f\|_{l,\lambda_j(x),\veps}$ is also large.
\begin{lemma}\label{lem:packing1}
 Let $f\in B_p^{s,\infty}(\RR^d)$, $x\in\RR^d$, $\kappa\in\RR$, $\veps\in (0,1)$ and $j,l\in\mathbb N$ with $l\in [(1-\veps)j,(1+\veps)j]$.  Then 
 $$|Q_l f(x)|\leq C 2^{\left(\frac dp-s+\theta(\veps)\right)j} \|f\|_{l,\lambda_j(x),\veps}+C2^{\kappa j}$$
 where $\theta:(0,+\infty)\to(0,+\infty)$ satisfies $\lim_{0^+}\theta=0$.
\end{lemma}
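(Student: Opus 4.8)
The plan is to split the finite sum $Q_l f(x)=\sum_i\sum_{\lambda\in\Lambda_l}c_\lambda^{(i)}\psi_\lambda^{(i)}(x)$ according to whether the cube $\lambda$ is close to $\lambda_0:=\lambda_j(x)$ or not, i.e.\ into the cubes belonging to $\Lambda_{l,\lambda_0,\veps}$ and those belonging to $\Lambda_l\backslash\Lambda_{l,\lambda_0,\veps}$, and to estimate each piece by a different mechanism: the exponential decay of the wavelets for the far part, and H\"older's inequality for the near part.

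For the far part I would invoke \eqref{eq:packing1}: since $x\in\lambda_0$, every $\lambda=(l,k)\notin\Lambda_{l,\lambda_0,\veps}$ satisfies $\|2^l x-k\|\geq 2^{\veps l}$. Combining the a priori estimate $|c_\lambda^{(i)}|\leq C2^{(\frac dp-s)l}$, valid for $f\in B_p^{s,\infty}(\RR^d)$, with Lemma \ref{lem:wavelet2} applied with a parameter $\kappa'$, one gets
$$\sum_i\sum_{\lambda\in\Lambda_l\backslash\Lambda_{l,\lambda_0,\veps}}|c_\lambda^{(i)}|\,|\psi_\lambda^{(i)}(x)|\leq C2^{(\frac dp-s)l}2^{-\kappa' l}.$$
Since $l\geq(1-\veps)j$, it suffices to choose $\kappa'$ large enough (in terms of $\kappa,\veps,s,p,d$) so that $(\frac dp-s-\kappa')l\leq\kappa j$ for every admissible $l$; this bounds the far part by $C2^{\kappa j}$.

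For the near part I would apply H\"older's inequality with exponents $p$ and $p^*$ after writing $c_\lambda^{(i)}\psi_\lambda^{(i)}(x)=\big(c_\lambda^{(i)}2^{(s-\frac dp)l}\big)\cdot\big(2^{(\frac dp-s)l}\psi_\lambda^{(i)}(x)\big)$. The $\ell^p$--factor is exactly $\|f\|_{l,\lambda_0,\veps}$, while the $\ell^{p^*}$--factor equals $2^{(\frac dp-s)l}\big(\sum_i\sum_{\lambda\in\Lambda_{l,\lambda_0,\veps}}|\psi_\lambda^{(i)}(x)|^{p^*}\big)^{1/p^*}$; since $p^*\geq 1$ and the summands are nonnegative, $\big(\sum|\psi_\lambda^{(i)}(x)|^{p^*}\big)^{1/p^*}\leq\sum_i\sum_{\lambda\in\Lambda_l}|\psi_\lambda^{(i)}(x)|\leq C$ by Lemma \ref{lem:wavelet1} (when $p=1$ one uses instead the uniform bound $|\psi_\lambda^{(i)}(x)|\leq C$ coming from \eqref{eq:fastdecaywavelets}, so that the $\ell^\infty$--factor is still $\leq C2^{(\frac dp-s)l}$). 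Hence the near part is at most $C2^{(\frac dp-s)l}\|f\|_{l,\lambda_0,\veps}$, and since $l\in[(1-\veps)j,(1+\veps)j]$ one has $2^{(\frac dp-s)l}\leq 2^{(\frac dp-s+\theta(\veps))j}$ with, say, $\theta(\veps)=(1+|\frac dp-s|)\veps$, which is positive and tends to $0$ as $\veps\to 0^+$. Adding the two contributions yields the stated inequality with $\lambda_0=\lambda_j(x)$.

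No step here is genuinely hard; the only points requiring care are to arrange the exponents so that the exponential decay of Lemma \ref{lem:wavelet2} beats the growth $2^{(\frac dp-s)l}$ of the coefficients \emph{uniformly} over $l\in[(1-\veps)j,(1+\veps)j]$, and to check that the resulting loss $\theta(\veps)$ in the exponent indeed vanishes with $\veps$ — both of which are immediate consequences of the two-sided bound $(1-\veps)j\le l\le(1+\veps)j$.
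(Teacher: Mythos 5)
Your proof is correct and follows essentially the same route as the paper: the same split of $Q_lf(x)$ into the cubes of $\Lambda_{l,\lambda_j(x),\veps}$ and their complement, Lemma \ref{lem:wavelet2} together with \eqref{eq:packing1} and $|c_\lambda^{(i)}|\leq C2^{(\frac dp-s)l}$ for the far part, and H\"older's inequality for the near part. The only (harmless) difference is in the $\ell^{p^*}$ factor: the paper bounds each $|\psi_\lambda^{(i)}(x)|$ by $\|\psi_\lambda^{(i)}\|_\infty$ and uses $\mathrm{card}(\Lambda_{l,\lambda_0,\veps})\leq C(2^{\veps dl}+2^{d(l-j)})$, whereas you keep the pointwise values and use Lemma \ref{lem:wavelet1}; your variant is in fact slightly sharper, absorbing only the loss $2^{|\frac dp-s|\veps j}$ into $\theta(\veps)$ instead of also the $2^{\veps dj/p^*}$ term.
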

\begin{proof}
 We write
 \begin{eqnarray*}
  |Q_l f(x)|&\leq&\sum_i\sum_{\lambda\in\Lambda_{l,\lambda_j(x),\veps}}|c_\lambda^{(i)}|\cdot \|\psi_\lambda^{(i)}\|_\infty +\sum_i\sum_{\lambda\notin\Lambda_{l,\lambda_j(x),\veps}}|c_\lambda^{(i)}|\cdot |\psi_\lambda(x)|.
 \end{eqnarray*}
 We deduce from Lemma \ref{lem:wavelet2}, \eqref{eq:packing1} and the inequality $|c_\lambda^{(i)}|\leq C 2^{\left(\frac dp-s\right)l}$
  that the last term is majorized by $C 2^{\kappa j}$.
Therefore, H\"older's inequality yields
\begin{eqnarray*}
|Q_l f(x)|&\leq& \left(\sum_i \sum_{\lambda\in \Lambda_{l,\lambda_j(x),\veps} }|c_\lambda^{(i)}|^p\right)^{1/p}\left(\sum_{i}\sum_{\lambda\in \Lambda_{l,\lambda_j(x),\veps}}1\right)^{1/p^*}+C2^{\kappa j}\\
&\leq&C 2^{-\left(s-\frac dp\right)l}\left(2^{{\veps dl}}+2^{\veps dj}\right)^{1/p^*} \|f\|_{l,\lambda_j(x),\veps}+C2^{\kappa j}.
\end{eqnarray*}
Taking into account that $|j-l|\leq\veps j$, we get the result.
\end{proof}
\begin{proof}[Proof of Proposition \ref{prop:packing1}]
Let us fix $\beta\in\left(-s,\frac dp-s\right]\backslash\{0\}$ (the statement is trivial for $\beta=-s$). Let $\gamma\in(-s,\beta)$ and $\veps\in(0,1)$. Then 
 $$ E(\beta,f)\subset\mathcal G^+(\gamma,\veps,f):=\left\{x\in\RR^d;\ \exists J\in\NN,\ \forall j\geq J,\ \sup_{l\in[(1-\veps)j,(1+\veps)j]}|Q_lf(x)|\geq 2^{\gamma j}\right\}.$$
Set $\mathcal G_J^+(\gamma,\veps,f):=\left\{x\in\RR^d;\ \forall j\geq J,\  \sup_{l\in[(1-\veps)j,(1+\veps)j]}|Q_lf(x)|\geq  2^{\gamma j}\right\}.$
We intend to show that, for each $J\geq 1$, $\dboxsup \left(\mathcal G_J^+(\gamma,\veps,f)\right)\leq d-sp-\gamma p+\omega(\veps)$ where $\lim_0^+\omega=0$.
Since $\mathcal G^+(\gamma,\veps,f)\subset\bigcup_J \mathcal G_J^+(\gamma,\veps,f)$, it will follow from \cite[Section 3.3 and 3.4]{Fal03}) that 
$\dimp(E(\beta,f))\leq d-sp-\gamma p+\omega(\veps)$. Letting $\gamma$ to $\beta$ and
 $\veps$ to 0 will then yield the result.

Let $j\geq J$ be large and let $\Theta_j$ be the dyadic cubes of the $j$-th generation intersecting $\mathcal G_J^+(\gamma,\veps,f)$. Let $N_j$ be the cardinal number of $\Theta_j$.
Then, for any $\lambda_0\in\Theta_j$, Lemma \ref{lem:packing1} applied with $\kappa<\gamma$ to some $x\in \lambda_0\cap \mathcal G_J^+(\gamma,\veps,f)$ 
implies that there exists $l_0\in [(1-\veps)j,(1+\veps)j]$ with
$$2^{\gamma pj}\leq |Q_{l_0}f(x)|^p\leq C2^{(d-sp+\theta(\veps)p)j}\sup_{l\in [(1-\veps)j,(1+\veps)j]}\|f\|_{l,\lambda_0,\veps}^p+\frac 12 2^{\gamma p j}.$$
Summing this over all $\lambda_0\in\Theta_j$ we get
\begin{eqnarray*}
 N_j2^{\gamma pj}&\leq&C 2^{(d-sp+\theta(\veps)p)j}\sum_{\lambda_0\in\Theta_j}\sup_{l\in [(1-\veps)j,(1+\veps)j]}\|f\|^p_{l,\lambda_0,\veps}\\
 &\leq&C 2^{(d-sp+\theta(\veps)p)j}\sum_{l\in [(1-\veps)j,(1+\veps)j]}\sum_{\lambda_0\in\Lambda_j}\|f\|^p_{l,\lambda_0,\veps}.
 \end{eqnarray*}
 Now since any $\lambda\in\Lambda_l$ with $|l-j|\leq\veps j$ belongs to at most $C_d 2^{\veps dj+\veps(1+\veps)dj}$ different sets $\Lambda_{l,\lambda_0,\veps}$ for $\lambda_0$ describing $\Lambda_j$,
 we have that for any such $l$
 \[\sum_{\lambda_0\in\Lambda_j} \|f\|_{l,\lambda_0,\veps}^p\leq C 2^{\veps dpj+\veps(1+\veps)dpj}\|f\|_l.\]
 This in turn implies
 \begin{align*}
 N_j2^{\gamma pj}&\leq C 2^{(d-sp+\theta(\veps)p)j}2^{\veps dpj+\veps(1+\veps)dpj}\sum_{l\in [(1-\veps)j,(1+\veps)j]}\|f\|_l^p\\
 &\leq (2\veps j+1)2^{(d-sp+\theta(\veps)p)j}2^{\veps dpj+\veps(1+\veps)dpj} \|f\|^p.
\end{align*}
Thus,
$$\limsup_{j\to+\infty}\frac{\log N_j}{j\log 2}\leq d-sp-\gamma p+\theta(\veps)p+\veps dp+\veps(1+\veps)d,$$
which allows us to conclude.
\end{proof}

\section{Existence of multifractal functions}\label{sec:existence}

Throughout this section, we assume that the wavelets have compact support.
\subsection{A Cantor set with prescribed behaviour of the wavelets}\label{sec:compactset}
The nonpositivity of the wavelets (more precisely, the nonpositivity of $P_j$) add substantial difficulties to the construction of a saturating function $f$ such that $P_j f(x)$ is large for all $j$ and all $x$ in a big set. Our strategy is to force positivity by the construction of a big Cantor set
where we control the behaviour of many $\psi_\lambda^{(1)}$.

\begin{proposition}\label{prop:bigcantorset}
 Let $d'\in (0,d)$. There exist an autosimilar and compact set $K\subset\RR^d$ satisfying the open set condition and two integers $t,N$ such that
 \begin{itemize}
  \item $\dimh(K)=\dimp(K)\geq d'$.
  \item $K$ is the decreasing intersection of compact sets $K_n$, where each $K_n$ is the union of closed dyadic cubes of width $2^{-(t+Nn)}$. We denote
  by $\Theta_n$ the set of closed dyadic cubes of width $2^{-(t+Nn)}$ such that $K_n=\bigcup_{\lambda\in\Theta_n}\lambda$.
  \item To each $\lambda\in \Theta_n$, we may associate a closed dyadic cube $\mu(\lambda)$ of width $2^{-Nn}$ such that, if $\lambda\neq\lambda'\in\Theta_n$, then 
  $\mu(\lambda)\neq \mu(\lambda')$.
  \item For all $x\in K_n$ and all $\lambda\in \Theta_n$, 
  $$
  \begin{cases}
   \psi_{\mu(\lambda)}^{(1)}(x)\geq 1&\textrm{if }x\in \lambda\\
   \psi_{\mu(\lambda)}^{(1)}(x)=0&\textrm{otherwise.}
  \end{cases}$$
 \end{itemize}
\end{proposition}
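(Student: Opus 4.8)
The plan is to obtain $K$ as the attractor of a single explicit similarity IFS, tuned so that at each stage $n$ every surviving piece $\lambda$ carries, on a dyadic cube $\mu(\lambda)$ of width $2^{-Nn}$, exactly one dilated and translated copy of $\psi^{(1)}$ which is $\ge 1$ on $\lambda$ and identically $0$ on the rest of $K_n$.

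\emph{A local bump of $\psi^{(1)}$.} Since $\psi^{(1)}$ is a nonzero continuous function of zero mean, after the harmless normalisation that multiplies it by a suitable nonzero constant and possibly by $-1$ (this only rescales the functions built from $K$ later on), and using that replacing $\psi^{(1)}$ by an integer translate merely relabels the indexed family $(\psi^{(1)}_{j,k})$, I may fix an integer $t\ge 1$ and $\ell_0\in\{0,\dots,2^{t}-1\}^{d}$ such that $\psi^{(1)}\ge 1$ on the closed dyadic cube $C_0$ of generation $t$ with lower corner $\ell_0 2^{-t}$. Fix also $R\in\NN$ with $\mathrm{supp}\,\psi^{(1)}\subset[-R,R]^{d}$ and a sufficiently large integer $D$ depending only on $R,t$. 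The observation driving the whole proof is this: for any generation-$j$ dyadic cube $(j,m)$, if $\lambda$ is the generation-$(t+j)$ cube of index $k_\lambda:=2^{t}m+\ell_0$, then $2^{j}\lambda-m=C_0$ as subsets of $\RR^{d}$, hence $\psi^{(1)}_{j,m}\ge 1$ on $\lambda$; and since $\psi^{(1)}_{j,m}$ is supported in the $\ell^\infty$-ball of radius $R2^{-j}$ about $m2^{-j}$, it vanishes on every generation-$(t+j)$ cube whose index differs from $k_\lambda$ by at least $D$ in the $\ell^\infty$-norm. Consequently, as soon as $K_n$ is realised as a union of generation-$(t+Nn)$ cubes all of whose indices are $\equiv\ell_0\pmod{2^{t}}$ and pairwise at $\ell^\infty$-distance $\ge D$, the choice $\mu(\lambda):=\bigl(Nn,\,(k_\lambda-\ell_0)/2^{t}\bigr)$ is legitimate and yields at once: $\mu$ injective on $\Theta_n$, $\psi^{(1)}_{\mu(\lambda)}\ge 1$ on $\lambda$, and $\psi^{(1)}_{\mu(\lambda)}\equiv 0$ on $K_n\setminus\lambda$.

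\emph{The self-similar set.} Fix a large integer $N>t$ (chosen at the very end). Take $\lambda_\ast:=C_0$ as seed. The generation-$(t+N)$ dyadic subcubes of $\lambda_\ast$ with index $\equiv\ell_0\pmod{2^{t}}$ form a regular grid of $2^{(N-t)d}$ cubes spaced $2^{t}$ apart in index; among them I select a subset $\mathcal M$ that is $D$-separated in index and stays away from $\partial\lambda_\ast$, with $\card(\mathcal M)\ge c\,2^{(N-t)d}$ for an absolute $c=c(D,t,d)>0$. I attach to each selected cube the similarity of ratio $2^{-N}$ mapping $\lambda_\ast$ affinely onto it, let $K$ be the attractor of the resulting IFS, $K_n$ its $n$-th approximation and $\Theta_n$ the associated generation-$(t+Nn)$ cubes. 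A short base-$2^{N}$ computation shows that iterating these maps preserves both "index $\equiv\ell_0\pmod{2^{t}}$" and "distinct pieces $D$-separated in index" at every generation, so Step 1 applies at all levels with $\mu(\lambda)$ as defined there. Then $K$ is self-similar, $K=\bigcap_n K_n$, each $K_n$ is a finite union of closed dyadic cubes of width $2^{-(t+Nn)}$, and the prescribed behaviour of $\psi^{(1)}_{\mu(\lambda)}$ holds. The images of $\lambda_\ast$ being pairwise disjoint inside $\lambda_\ast$, the open set condition holds with open set $\mathrm{int}\,\lambda_\ast$, whence, by the classical dimension theory of such self-similar sets (e.g. \cite{Fal03}), $\dimh(K)=\dimp(K)$ equals the similarity dimension $s$ given by $\card(\mathcal M)\,2^{-Ns}=1$. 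Since $\card(\mathcal M)\ge c\,2^{(N-t)d}$, we get $s\ge d-\tfrac1N(td-\log_2 c)$; as $d'<d$, choosing $N$ large enough gives $s\ge d'$.

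\emph{Main difficulty.} The crux is to reconcile three competing constraints. For $\psi^{(1)}_{\mu(\lambda)}\ge 1$ on $\lambda$ one needs $\lambda$ in a fixed position relative to $\mu(\lambda)$ --- i.e.\ index $\equiv\ell_0\pmod{2^{t}}$ --- which both pins the contraction ratio to be exactly $2^{-N}$ and discards a factor $2^{td}$ of the candidate subcubes; for $\psi^{(1)}_{\mu(\lambda)}\equiv 0$ off $\lambda$ one needs the $D$-separation, which removes all but a fixed positive fraction of what remains; and one still needs $\dimh(K)\ge d'$. This works because both losses are fixed multiplicative constants (depending only on $\psi^{(1)}$) paid once per block of $N$ scales, hence only a $2^{-O(1)}$ correction to $\card(\mathcal M)^{1/N}$: keeping $t,R,D$ frozen and letting $N\to\infty$ pushes $s$ arbitrarily close to $d$, in particular above $d'$. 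The one routine but essential verification hidden above is that the residue condition modulo $2^{t}$ and the $D$-separation survive iteration of the similarities, which follows from a digit computation in base $2^{N}$ using $N>t$.
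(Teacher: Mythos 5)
Your construction is correct and essentially the same as the paper's: a self-similar family of dyadic cubes at scales $2^{-(t+Nn)}$ whose indices are fixed modulo $2^t$, with $\mu(\lambda)$ obtained by stripping the last $t$ binary digits of the index, preservation of the residue condition checked by a base-$2^N$ digit computation, and similarity dimension $\tfrac{N-t}{N}d+o(1)\to d$ as $N\to\infty$. The only (harmless) divergence is that the paper first rescales $\psi^{(1)}$ so that its support lies in $[0,1]^d$, which lets it keep all $2^{(N-t)d}$ admissible subcubes with no separation condition, whereas you keep the support in $[-R,R]^d$ and thin to a $D$-separated subfamily at the cost of a fixed multiplicative constant.
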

\begin{proof}
 To simplify the notations, we will only provide a proof for the one-dimensional case.
 Rescaling $\psi=\psi^{(1)}$ if necessary, we may assume that $\psi\geq 1$ on some dyadic interval $\left[\frac k{2^t},\frac{k+1}{2^t}\right]$
 and that $\psi=0$ outside $[0,1]$. Let $N\geq t$ be a very large integer and 
 $$\Omega=\left\{2^{N-t}k,2^{N-t}k+1,\dots,2^{N-t}k+2^{N-t}-1\right\}.$$
 For $m\in\Omega$, let $s_m$ be the similarity $s_m(x)=\frac{1}{2^N}x+\frac{m}{2^N}$. We start from $K_0=\left[\frac k{2^t},\frac{k+1}{2^t}\right]$
 and we observe that the choice of $\Omega$ is done in order to ensure that $s_m(K_0)\subset K_0$ for all $m\in\Omega$. 
 Define inductively $K_n=\bigcup_{m\in\Omega}s_m(K_{n-1})$ and $K=\bigcap_{n\geq 0}K_n$. The compact set $K$ satisfies
 the open set condition, namely there exists a nonempty bounded open set $V$ such that $V\supset \bigcup_{m\in\Omega}s_m(V)$ where the union is disjoint.
 For instance, the set $\left(\frac k{2^t},\frac{k+1}{2^t}\right)$ does the job since
 $$s_m\left(\left(\frac k{2^t},\frac{k+1}{2^t}\right)\right)=\left(\frac{k+m2^t}{2^{N+t}},\frac{k+m2^t+1}{2^{N+t}}\right).$$
 It follows from the standard theory of autosimilar sets (see e.g. \cite{Fal03}) that
 $\dimh(K)=\dimp(K)=\kappa$ where $\kappa$ is the solution of 
 $$\textrm{card}(\Omega)\times\frac{1}{2^{N\kappa}}=1\iff \frac{2^{N-t}}{2^{N\kappa}}=1.$$
 Letting $N$ to infinity, we may be sure that $\kappa$ is as close to 1 as we want.
 
 Each $K_n$ consists of closed dyadic intervals of width $2^{-(t+Nn)}$. We denote by $\Theta_n$ the set of these intervals.
 We prove by induction on $n$ that any $\lambda\in\Theta_n$
 can be written (uniquely) $\lambda=\left[\frac{k+l2^t}{2^{t+Nn}},\frac{k+l2^t+1}{2^{t+Nn}}\right]$. 
 This is true for $n=0$. If we assume that this is true up to $n$, then any $\lambda\in\Theta_{n+1}$ is equal to 
 \begin{align*}
  \lambda&=s_m\left(\left[\frac{k+l2^t}{2^{t+Nn}},\frac{k+l2^t+1}{2^{t+Nn}}\right]\right)\\
  &=\left[\frac{k+2^t(l+2^{Nn}m)}{2^{t+N(n+1)}},\frac{k+2^t(l+2^{Nn}m)+1}{2^{t+N(n+1)}}\right]
 \end{align*}
 for some $l,m$. 
 
 We then define
 $\mu(\lambda)=\left[\frac{l}{2^{Nn}},\frac{l+1}{2^{Nn}}\right]$ so that, for $\lambda\neq\lambda'\in\Theta_n$,
 we indeed have $\mu(\lambda)\neq\mu(\lambda')$. Finally, if $x$ belongs to $\lambda=\left[\frac{k+l2^t}{2^{t+Nn}},\frac{k+l2^t+1}{2^{t+Nn}}\right]$,
 then $\psi_{\mu(\lambda)}(x)=\psi(2^{Nn}x-l)$ and it is easy to check that $2^{Nn}x-l\in\left[\frac k{2^t},\frac{k+1}{2^t}\right]$
 so that $\psi_{\mu(\lambda)}(x)\geq 1$. On the other hand, if $x\in \lambda'=\left[\frac{k+l'2^t}{2^{t+Nn}},\frac{k+l'2^t+1}{2^{t+Nn}}\right]$
 with $\lambda\neq \lambda'$, then $2^{Nn}x-l\notin [0,1]$, so that $\psi_{\mu(\lambda)}(x)=0$.
\end{proof}

\subsection{The saturating functions - case of divergence}

To prove part (ii) and (iii) of Theorem \ref{thm:main1}, we begin with the construction of one function whose wavelet series diverges fast on a set with given upper box dimension and which is moreover
nonnegative. We also assume that $s>0$ so that the interval $(d-sp,d)$ is not empty. 

\begin{theorem}\label{thm:saturatingdiv1}
Let $d'\in (d-sp,d)$ and let $K$ be given by Proposition \ref{prop:bigcantorset}. For all $\alpha\in(0,d-sp)$,
for all $G\subset K$ with $\dboxsup(G)<\alpha$, there exists $f\in B_p^{s,1}(\RR^d)$, $\|f\|\leq 1$ such that
\begin{itemize}
\item for all $x\in K$, for all $j\in\NN$, $P_j f(x)\geq 0$;
\item for all $x\in G$, $\liminf_j \frac{\log |P_j f(x)|}{j\log 2}\geq\frac{d-sp-\alpha}{p}$.
\end{itemize}
\end{theorem}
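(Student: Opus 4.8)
The plan is to build $f$ as an explicit wavelet series supported on the wavelets $\psi^{(1)}_{\mu(\lambda)}$ attached by Proposition~\ref{prop:bigcantorset} to the cubes $\lambda\in\Theta_n$, choosing the coefficients so that at each relevant scale the contribution of $Q_{Nn}f$ to a point $x\in G$ is of order $2^{\frac{d-sp-\alpha}{p}Nn}$, while the $B_p^{s,1}$-norm stays bounded. Concretely, I would set
\[
f=\sum_{n\geq 1}\ \sum_{\lambda\in\Theta_n'} a_n\,2^{-\left(s-\frac dp\right)Nn}\,\psi^{(1)}_{\mu(\lambda)},
\]
where $\Theta_n'\subset\Theta_n$ is a subfamily of those cubes of generation $t+Nn$ that meet $G$ (covering $G$ by cubes of $\Theta_n$), and $a_n>0$ is a decaying weight to be tuned. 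Since the $\mu(\lambda)$, $\lambda\in\Theta_n$, are pairwise distinct dyadic cubes of generation $Nn$, each scale $Nn$ receives at most $\card(\Theta_n)$ wavelet coefficients each of modulus $a_n 2^{-(s-d/p)Nn}$; but we only keep those indexed by $\Theta_n'$, and $\card(\Theta_n')\leq C 2^{(d'+o(1))Nn}$ if we cover $G$ efficiently — in fact, because $\dboxsup(G)<\alpha$, we can cover $G$ (for $n$ large) by at most $2^{\alpha Nn}$ cubes of $\Theta_n$. Then the scale-$Nn$ block of the Besov sequence has $\ell^p$ size $\big(\card(\Theta_n') a_n^p\big)^{1/p}\leq a_n 2^{\alpha Nn/p}$, so $\veps_{Nn}\leq a_n 2^{\alpha Nn/p}$ and all other $\veps_l$ vanish; choosing $a_n=c\,2^{-\alpha Nn/p}/n^2$ makes $(\veps_l)\in\ell^1$ with small norm, so $\|f\|_{B_p^{s,1}}\leq 1$ after normalizing $c$.

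Next I would verify the two pointwise properties. For positivity: fix $x\in K$ and $j\in\NN$. By the third and fourth bullets of Proposition~\ref{prop:bigcantorset}, for each $n$ with $Nn<j$ there is at most one $\lambda\in\Theta_n$ with $x\in\lambda$ (namely $\lambda=\lambda_{t+Nn}(x)$ restricted to $K_n$), and $\psi^{(1)}_{\mu(\lambda)}(x)\geq 1\geq 0$ for that one, while $\psi^{(1)}_{\mu(\lambda')}(x)=0$ for all other $\lambda'\in\Theta_n$; hence each block $Q_{Nn}f(x)=a_n 2^{-(s-d/p)Nn}\psi^{(1)}_{\mu(\lambda)}(x)\geq 0$ (or is $0$ if $x\notin\bigcup_{\lambda\in\Theta_n'}\lambda$), and since the other $Q_l f$ with $l$ not a multiple of $N$ vanish, $P_jf(x)=\sum_{Nn<j}Q_{Nn}f(x)\geq 0$. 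For the lower bound on the divergence rate: fix $x\in G$. For each large $n$, $x$ lies in some $\lambda\in\Theta_n$, and by construction of $\Theta_n'$ (which I choose to contain a cube of $\Theta_n$ covering each point of $G$) we have $x\in\lambda$ with $\lambda\in\Theta_n'$, so $\psi^{(1)}_{\mu(\lambda)}(x)\geq 1$ and $Q_{Nn}f(x)\geq a_n 2^{-(s-d/p)Nn}=c\,n^{-2}2^{\frac{d-sp-\alpha}{p}Nn}$. Because all blocks are $\geq 0$ at $x$, $P_{Nn+1}f(x)\geq Q_{Nn}f(x)$, and interpolating over $j\in[Nn, N(n+1))$ (using that intermediate partial sums only add nonnegative terms, so $P_jf(x)\geq P_{Nn+1}f(x)$ for $j>Nn$) gives $\liminf_j \frac{\log P_jf(x)}{j\log 2}\geq \frac{d-sp-\alpha}{p}$.

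The main obstacle I expect is the bookkeeping that makes the positivity of $P_j$ work \emph{for every} $j$, not just for $j$ a multiple of $N$ — this is exactly the point the introduction flags. The saving grace, and the reason for routing everything through Proposition~\ref{prop:bigcantorset}, is that $f$ only involves wavelets at scales $Nn$, so $Q_l f\equiv 0$ unless $N\mid l$, and at a scale $Nn$ the proposition guarantees that at each $x\in K_n$ at most one surviving wavelet is nonzero and it is $\geq 0$ there; thus $P_jf(x)$ is always a finite sum of nonnegative blocks. A secondary technical point is ensuring $G\subset K$ can indeed be covered at generation $t+Nn$ by $\leq 2^{(\alpha+o(1))Nn}$ cubes of $\Theta_n$: this is immediate from $\dboxsup(G)<\alpha$ since $\Theta_n$-cubes have width $2^{-(t+Nn)}$ and $G\subset K_n$, so any near-optimal covering of $G$ by dyadic cubes of that generation consists of cubes lying in $\Theta_n$. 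Finally one checks the normalization constant $c$ can be taken independent of everything but $(d,s,p,q,N,\psi)$, giving $\|f\|\leq 1$.
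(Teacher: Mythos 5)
Your construction is essentially identical to the paper's: the paper likewise sets $f=\sum_n n^{-2} f_n$ with $f_n=2^{(Nn+t)\frac{d-sp-\alpha'}{p}}\sum_{\lambda\in\Gamma_n}\psi^{(1)}_{\mu(\lambda)}$, where $\Gamma_n$ are the cubes of $\Theta_n$ meeting $G$ (counted via $\dboxsup(G)<\alpha'<\alpha$), and deduces positivity of every $P_jf(x)$ on $K$ and the lower bound at points of $G$ from the sign property of Proposition \ref{prop:bigcantorset} exactly as you do. The argument is correct and matches the paper's proof.
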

\begin{proof}
 Let $\alpha'\in (\dboxsup(G),\alpha)$. Let $\Gamma_n\subset\Theta_n$ be the dyadic balls of width $2^{-(t+Nn)}$ intersecting $G$. One knows that $\textrm{card}(\Gamma_n)\leq C_G 2^{(t+Nn)\alpha'}$. 
 Define
 $$f_n=2^{(Nn+t)\times\frac{d-sp-\alpha'}p}\sum_{\lambda\in \Gamma_n}\psi_{\mu(\lambda)}^{(1)}$$
 so that, since for $\lambda\in\Gamma_n$, $\mu(\lambda)$ is a cube of the $Nn$-th generation,
 \begin{eqnarray*}
  \|f_n\|&\leq &2^{(Nn+t)\times\frac{d-sp-\alpha'}p} \left(\textrm{card}(\Gamma_n)\right)^{1/p} 2^{Nn\times\frac{sp-d}p}\\
  &\leq&C_{G}.
 \end{eqnarray*}
 We then set $f=\sum_{n\geq 1}n^{-2}f_n$. For $x\in K$ and $j\in (Nn,N(n+1)]$, 
 $$P_jf(x)=\sum_{l\leq n}l^{-2}2^{(Nl+t)\times\frac{d-sp-\alpha'}{p}}\sum_{\lambda\in \Gamma_l} \psi_{\mu(\lambda)}^{(1)}(x)$$
 and this is always nonnegative. Moreover, if $x$ belongs to $G$, then $x$ belongs to some $\lambda\in\Gamma_{n}$ so that 
 $$P_j f(x)\geq n^{-2}2^{\big(Nn+t\big)\times\frac{d-sp-\alpha'}{p}}.$$
 This shows that 
 $$\liminf_j \frac{\log P_j f(x)}{j\log 2}\geq\frac{d-sp-\alpha'}p\geq\frac{d-sp-\alpha}p.$$
\end{proof}

For our proof of prevalence, we will need a variant of the previous result.

\begin{theorem}\label{thm:saturatingdiv2}
Let $d'\in (d-sp,d)$ and let $K,N,t$ be given by Proposition \ref{prop:bigcantorset}. Let also $J\geq 1$. For all $\alpha\in(0,d-sp)$,
for all $G\subset K$ with $\dboxsup(G)<\alpha$, there exist $f_0,\dots,f_{J-1}\in B_p^{s,1}(\RR^d)$, $\|f_k\|\leq 1$ such that
\begin{itemize}
\item for all $x\in K$, for all $j\in\NN$, for all $k\in\{0,\dots,J-1\}$, $P_j f_k(x)\geq 0$;
\item for all $x\in G$, for all $j\in\NN$, for all $k\in\{0,\dots,J-1\}$, for all $l\in\{0,\dots,JN-1\}$ with $kN\neq l$, $Q_{jJN+l}f_k(x)=0$;
\item for all $k\in\{0,\dots,J-1\}$, $\liminf_j \inf_{x\in G} \frac{\log |Q_{jJN+kN} f_k(x)|}{jJN\log 2}\geq\frac{d-sp-\alpha}{p}$.
\end{itemize}
\end{theorem}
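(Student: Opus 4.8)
The plan is to run the construction from the proof of Theorem~\ref{thm:saturatingdiv1} separately on each of the $J$ residue classes of scales modulo $JN$, assigning one class to each $f_k$. Fix $\alpha'\in\big(\dboxsup(G),\alpha\big)$ and, exactly as before, let $\Gamma_n\subset\Theta_n$ be the dyadic cubes of width $2^{-(t+Nn)}$ that meet $G$, so that $\card(\Gamma_n)\leq C_G 2^{(t+Nn)\alpha'}$. Put
\[g_n=2^{(Nn+t)\frac{d-sp-\alpha'}{p}}\sum_{\lambda\in\Gamma_n}\psi_{\mu(\lambda)}^{(1)},\]
and, for $k\in\{0,\dots,J-1\}$, define $f_k=c\sum_{n\geq 1,\ n\equiv k\ (\mathrm{mod}\ J)}n^{-2}g_n$, where $c>0$ is a small constant fixed below. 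The structural point is that $g_n$ is a finite linear combination of wavelets $\psi^{(1)}_\mu$ with $\mu$ of generation $Nn$ (and has no scaling-function component), so $Q_m g_n=g_n$ when $m=Nn$ and $Q_m g_n=0$ when $m\neq Nn$. Hence the only nonzero blocks of $f_k$ occur at scales $m\equiv kN\ (\mathrm{mod}\ JN)$, and $Q_{(jJ+k)N}f_k=c(jJ+k)^{-2}g_{jJ+k}$ for every $j$.

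Granting this, the three assertions follow as in Theorem~\ref{thm:saturatingdiv1}. First, the computation carried out there gives $\|g_n\|_{B_p^{s,1}}\leq C_G$ (all $\veps_j$ vanish except $\veps_{Nn}\leq C_G$, and there is no scaling part), so $\|f_k\|\leq cC_G\sum_n n^{-2}\leq 1$ once $c$ is small enough; the value of $c$ is irrelevant for the two $\liminf$ statements. Next, since $G\subset K\subset K_n$ for every $n$, Proposition~\ref{prop:bigcantorset} gives $\psi^{(1)}_{\mu(\lambda)}(x)\geq 0$ for all $x\in K$ and $\lambda\in\Theta_n$; thus each $g_n$ is nonnegative on $K$ and $P_jf_k(x)=c\sum_{Nn<j,\ n\equiv k\ (J)}n^{-2}g_n(x)\geq 0$ for every $x\in K$ and $j\in\NN$, which is the first bullet. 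For the second bullet, if $l\in\{0,\dots,JN-1\}$ then $jJN+l\equiv l\ (\mathrm{mod}\ JN)$, and $l=kN$ is the unique element of that range congruent to $kN$; hence $Q_{jJN+l}f_k\equiv 0$ whenever $l\neq kN$. For the third bullet, fix $x\in G$ and $j$ large: then $x\in K_{jJ+k}$, so $x$ lies in a unique cube $\lambda_0\in\Theta_{jJ+k}$, which belongs to $\Gamma_{jJ+k}$ because $x\in G$. Since the summands of $g_{jJ+k}$ are $\geq 0$ at $x$ and $\psi^{(1)}_{\mu(\lambda_0)}(x)\geq 1$, we get, uniformly in $x\in G$,
\[|Q_{jJN+kN}f_k(x)|\geq c(jJ+k)^{-2}\,2^{(N(jJ+k)+t)\frac{d-sp-\alpha'}{p}}.\]
Taking logarithms, dividing by $jJN\log 2$, and letting $j\to+\infty$ (so that $(N(jJ+k)+t)/(jJN)\to 1$) yields $\liminf_j\inf_{x\in G}\frac{\log|Q_{jJN+kN}f_k(x)|}{jJN\log 2}\geq\frac{d-sp-\alpha'}{p}\geq\frac{d-sp-\alpha}{p}$.

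I do not expect a real obstacle here: Proposition~\ref{prop:bigcantorset} and Theorem~\ref{thm:saturatingdiv1} already contain the two hard ingredients (the positivity-forcing Cantor set and the amplitude calibration), and the present statement is a bookkeeping refinement whose only new content is the remark that $Q_m$ annihilates any function whose wavelet coefficients live at scales other than $m$, which is precisely what makes the $J$ perturbing directions act on disjoint families of blocks. The one point that needs a little care is that the divergence rate must equal $\tfrac{d-sp-\alpha}{p}$ for \emph{every} $k$ simultaneously; this is automatic because all the $f_k$ reuse the same amplitudes $2^{(Nn+t)(d-sp-\alpha')/p}$ and the same covering cubes $\Gamma_n$, merely along the sparser progression $n\equiv k\ (\mathrm{mod}\ J)$, and because the normalisation $jJN$ differs from the active scale $(jJ+k)N$ only by $kN$, a quantity bounded in $j$.
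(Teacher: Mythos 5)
Your construction is exactly the paper's: the paper proves this theorem by setting $f_k=\sum_{n\geq 1,\ n\equiv k\ (J)}n^{-2}f_n$ with the same blocks $f_n$ as in Theorem~\ref{thm:saturatingdiv1}, and your argument just fills in the (correct) details — in particular the observation that $Q_m$ annihilates wavelet blocks at scales other than $m$, which is what makes the three bullets work. No issues.
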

\begin{proof}
 The proof is identical to that of Theorem \ref{thm:saturatingdiv1} except that we now set 
 \[f_k = \sum_{\substack{n\geq 1;\\ n=k\ [J]}}n^{-2}f_n.\]
\end{proof}

\subsection{Existence of one strongly multifractal functions - the divergence case}\label{sec:prevalence}

We now go from the existence of one function $f$ with control of $P_j f(x)$ on a set of given upper box dimension
to the existence of a function $f$ with control of $P_j f(x)$ on a set of given packing dimension.
We recall that $X=B_p^{s,q}(\RR^d)$ or that $X=W^{p,s}(\RR^d)$ and we still assume that $s>0$.

\begin{lemma}\label{lem:strongly1}
Let $d'\in (d-sp,d)$ and let $K$ be given by Proposition \ref{prop:bigcantorset}. For all $\alpha\in(0,d-sp)$,
for all $F\subset K$ with $\dimp(F)=\alpha$, there exists $f\in X$, $\|f\|\leq 1$ such that
\begin{itemize}
\item for all $x\in K$, for all $j\in\NN$, $P_j f(x)\geq 0$;
\item for all $x\in F$, $\liminf_j \frac{\log |P_j f(x)|}{j\log 2}\geq\frac{d-sp-\alpha}{p}$.
\end{itemize}
\end{lemma}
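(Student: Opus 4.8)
The plan is to reduce the packing-dimension statement to the upper-box-dimension statement already proved in Theorem \ref{thm:saturatingdiv1}, by decomposing $F$ into countably many pieces, each of small upper box dimension, and then summing the corresponding functions. Recall the standard characterization of packing dimension: since $\dimp(F)=\alpha$, for every $\eta>0$ we can write $F=\bigcup_{m\geq 1}F_m$ with $\dboxsup(F_m)<\alpha+\eta$ for all $m$. Taking a sequence $\eta_r\to 0$ and using a diagonal argument, one produces a countable family $(G_r)_{r\geq 1}$ of subsets of $F$ (hence of $K$) with $\bigcup_r G_r\supset F$ and $\dboxsup(G_r)<\alpha_r$ where $\alpha_r$ is some sequence decreasing to $\alpha$; more simply, for each $r$ we cover $F$ by countably many sets of upper box dimension $<\alpha+1/r$, so altogether we get countably many sets $G_r\subset K$, each with $\dboxsup(G_r)<\alpha+1/r$, whose union contains $F$.

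Next, for each $r$, apply Theorem \ref{thm:saturatingdiv1} with $\alpha$ replaced by any $\alpha'_r\in(\dboxsup(G_r),\,\alpha+1/r)\cap(0,d-sp)$ (legitimate once $1/r$ is small enough that $\alpha+1/r<d-sp$; the finitely many exceptional $r$ are harmless and can be dropped) to obtain $g_r\in B_p^{s,1}(\RR^d)\subset X$ with $\|g_r\|_X\leq C\|g_r\|_{B_p^{s,1}}\leq C$, such that $P_j g_r(x)\geq 0$ for all $x\in K$ and all $j$, and $\liminf_j \frac{\log P_j g_r(x)}{j\log 2}\geq \frac{d-sp-\alpha'_r}{p}$ for all $x\in G_r$. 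Then set
\[
f=\sum_{r\geq 1} \theta_r\, g_r,
\]
where $\theta_r>0$ is chosen so that $\sum_r \theta_r\|g_r\|_X\leq 1$ (e.g. $\theta_r=c\,2^{-r}$ for suitable $c$); the series converges in $X$ and $\|f\|_X\leq 1$. Positivity is immediate: for $x\in K$ and any $j$, $P_j f(x)=\sum_r \theta_r P_j g_r(x)\geq 0$ since each term is nonnegative and $P_j$ commutes with the (norm-convergent) sum. For the lower bound at a point $x\in F$, pick $r$ with $x\in G_r$; then, again by nonnegativity of every term, $P_j f(x)\geq \theta_r P_j g_r(x)$, so
\[
\liminf_j \frac{\log P_j f(x)}{j\log 2}\;\geq\;\liminf_j \frac{\log P_j g_r(x)}{j\log 2}\;\geq\;\frac{d-sp-\alpha'_r}{p}\;\geq\;\frac{d-sp-\alpha}{p}-\frac{1}{rp}.
\]
This holds for the specific $r$ attached to $x$, so it does not yet give the clean bound $\frac{d-sp-\alpha}{p}$; the fix is to be slightly more careful in the construction of the $G_r$'s: arrange that for \emph{every} $x\in F$ one has $x\in G_r$ for infinitely many $r$ (for instance, from each countable cover indexed by $r$, keep \emph{all} pieces, and note $x$ lies in one piece of each cover), so that the inequality above holds with $r$ arbitrarily large, giving $\liminf_j \frac{\log P_j f(x)}{j\log 2}\geq \frac{d-sp-\alpha}{p}$ as required.

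The only genuinely delicate point is the bookkeeping in the first two steps: one must ensure simultaneously that the sets $G_r$ remain inside $K$ (so that the positivity conclusion of Theorem \ref{thm:saturatingdiv1} applies), that each $G_r$ has upper box dimension strictly below a level that tends down to $\alpha$, and that each $x\in F$ is caught in $G_r$ for arbitrarily large $r$; all three are consequences of the definition of packing dimension via countable covers by sets of controlled upper box dimension, combined with keeping every piece of every cover. The convergence of $f=\sum_r\theta_r g_r$ in $X$ and the interchange of $P_j$ with the sum are routine since $P_j$ is a bounded operator on $X$ (indeed continuous for the norm) and $\{g_r\}$ is bounded. Everything else is a direct transcription of the argument in Theorem \ref{thm:saturatingdiv1}.
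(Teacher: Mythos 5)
Your proposal is correct and follows essentially the same route as the paper: decompose $F$ via the countable-cover characterization of packing dimension into sets of upper box dimension close to $\alpha$, apply Theorem \ref{thm:saturatingdiv1} to each piece, and sum with summable weights, using the pointwise nonnegativity of each $P_j f_{l,u}$ both for the positivity claim and to drop to a single term when bounding $P_j f(x)$ from below. The paper handles the "infinitely many $r$" issue you flag by indexing the covers with two indices $(l,u)$ — one cover $\bigcup_u G_{l,u}\supset F$ for each level $\alpha_l\downarrow\alpha$ — which is exactly the bookkeeping fix you describe.
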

\begin{proof}
 Let $(\alpha_l)$ be a sequence decreasing to $\alpha$. Then there exists a sequence $(G_{l,u})$ of subsets of $(0,1)^d$ such that $F\subset\bigcap_l \bigcup_u G_{l,u}$ and $\dboxsup(G_{l,u})<\alpha_l$.
 We apply Theorem \ref{thm:saturatingdiv1} with $G=G_{l,u}\cap K$ and $\alpha=\alpha_l$ to get a function $f_{l,u}$ and we set $f=\sum_{l,u}2^{-(l+u)}f_{l,u}$. 
 Then, for any $x\in K$, 
 $$P_j f(x)\geq\sum_{l,u}2^{-(l+u)}P_j f_{l,u}(x)\geq 0.$$
 Moreover, let $l\geq 1$ and $x\in F$. There exists $u$ such that $x\in G_{l,u}$. Then from $P_j f(x)\geq 2^{-(l+u)}P_j f_{l,u}(x)$ we deduce that
 \[ \liminf_j \frac{\log |P_j f(x)|}{j\log 2}\geq \frac{d-sp-\alpha_l}{p}. \]
 Since $\alpha_l$ may be taken arbitrarily close to $\alpha$, we get the result.
\end{proof}

We are now ready for the proof of part (iii) of Theorem \ref{thm:main1}.
\begin{proof}[Proof of part (iii) of Theorem \ref{thm:main1}]
 Let $d'\in(d-sp,d)$ and let $K$ be the compact set given by Proposition \ref{prop:bigcantorset}.
Since $K$ is an autosimilar and compact set satisfying the open set condition with $\dimh(K)=d'>d-sp$, there exists $(F_\beta)_{\beta\in \left(0,\frac dp-s\right)}$, a decreasing family of compact 
subsets of $K$, such that, for all $\beta\in \left(0,\frac dp-s\right)$, 
$$\dimh(F_\beta)=\dimp(F_\beta)=d-sp-\beta p\textrm{ and }\mathcal H^{d-sp-\beta p}(F_\beta)>0$$
(see \cite{BAYMFF}).
Let $(\beta_k)$ be a dense sequence in  $\left(0,\frac dp-s\right)$. For any $k\geq 1$, Lemma \ref{lem:strongly1} yields the existence of a function $f_k\in X$, $\|f_k\|\leq 1$, such that,
for all $x\in F_{\beta_k}$, 
$$\liminf_j \frac{\log |P_j f_k(x)|}{j\log 2}\geq\beta_k$$
and $P_j f_k(y)\geq 0$ for all $y\in K$. We set $f=\sum_k 2^{-k}f_k$ and let $\beta\in  \left(0,\frac dp-s\right)$. 
Taking $(\beta_{\phi(k)})$ a subsequence of $(\beta_k)$ increasing to $\beta$, we get for all $k$ and all $x\in F_\beta\subset F_{\beta_{\phi(k)}}$,
$$\liminf_j \frac{\log |P_j f(x)|}{j\log 2}\geq\beta_{\phi(k)}.$$
Hence,
$$\liminf_j \frac{\log |P_j f(x)|}{j\log 2}\geq\beta.$$
Now, we can decompose $F_\beta$  into
$$F_\beta= \big(F_\beta\cap E(\beta,f)\big)\cup\bigcup_{\gamma>\beta} \big(F_\beta\cap E^-(\gamma,f)\big).$$
Because of Proposition \ref{prop:dimh}, $\mathcal H^{d-sp-\beta p}\big(E^-(\gamma,f)\big)=0$ for all $\gamma>\beta$ so that $\dimh\big(F_\beta\cap E(\beta,f)\big)=d-sp-\beta p$. This yields the conclusion, since
$$d-sp-\beta p\leq \dimh\big(F_\beta\cap E(\beta,f)\big)\leq \dimh\big(E(\beta,f)\big)
\leq \dimp\big(E(\beta,f)\big)\leq d-sp-\beta p.$$
Observe also that, because of part (i), there is nothing to do for $\beta=\frac dp-s$.
\end{proof}

\subsection{Residuality of multifractal functions - the divergence case}

We intend to prove the residual part of Theorem \ref{thm:main1} (ii). 
Again we assume $s>0$ and pick $d'\in(d-sp,d)$. Let us fix $K$ the compact set given by Proposition \ref{prop:bigcantorset}. 
 Our first step is to exhibit, for all compact sets $F\subset K$ with $\dimh(F)=\alpha$, 
 a residual set $\mathcal R_F$ such that, for all $f\in\mathcal R_F$, for all $x\in F$, 
$$\limsup_j \frac{\log |P_j f(x)|}{j\log 2}\geq\frac{d-sp-\alpha}p.$$
Although this construction can be carried on for all such subsets $F$ (and even without the restriction $F\subset K$),
we will impose that $\dimp(F)=\alpha$. In that case, the construction is simplified by the existence of one function satisfying the stronger property:
\begin{equation}\label{eq:residuality1}
\forall x\in F,\ \liminf_j \frac{\log |P_j f(x)|}{j\log 2}\geq\frac{d-sp-\alpha}p.
\end{equation}

\begin{lemma}\label{lem:residualfixedlevel}
Let $d'\in (d-sp,d)$ and let $K$ be given by Proposition \ref{prop:bigcantorset}. For all $\alpha\in(0,d-sp)$,
for all compact sets $F\subset K$ with $\dimp(F)=\alpha$, there exists a residual subset $\mathcal R_F\subset X$ such that, for all $f\in\mathcal R_F$, for all $x\in F$, 
$$\limsup_j \frac{\log |P_j f(x)|}{j\log 2}\geq\frac{d-sp-\alpha}p.$$
\end{lemma}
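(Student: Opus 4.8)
The plan is to exhibit $\mathcal R_F$ as a countable intersection of open dense sets, using a classical Baire-category argument built on the single saturating function supplied by Lemma \ref{lem:strongly1}. First I would fix the function $g\in X$ with $\|g\|\le 1$ given by Lemma \ref{lem:strongly1} applied to $F\subset K$ (with $\alpha=\dimp(F)$), so that $P_jg(x)\ge 0$ for all $x\in K$, $j\in\NN$, and $\liminf_j \frac{\log|P_jg(x)|}{j\log 2}\ge \frac{d-sp-\alpha}{p}$ for all $x\in F$. For integers $m,J\ge 1$ define
\[
\mathcal O_{m,J}=\left\{f\in X;\ \exists j\ge J,\ \forall x\in F,\ |P_jf(x)|> 2^{\left(\frac{d-sp-\alpha}{p}-\frac1m\right)j}\right\},
\]
and set $\mathcal R_F=\bigcap_{m,J}\mathcal O_{m,J}$. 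Any $f$ in this intersection satisfies $\limsup_j\frac{\log|P_jf(x)|}{j\log 2}\ge \frac{d-sp-\alpha}{p}-\frac1m$ for every $m$ and every $x\in F$, hence $\ge\frac{d-sp-\alpha}{p}$, which is exactly the conclusion.

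Next I would check that each $\mathcal O_{m,J}$ is open and dense. Openness: if $f\in\mathcal O_{m,J}$ with witness $j$, then since $F$ is compact and $P_j$ is, for fixed $j$, a bounded linear operator from $X$ into $C(\RR^d)$ uniformly on compact sets (this follows from Lemma \ref{lem:wavelet1}, which gives $\|P_j h\|_{L^\infty}\le C_j\|h\|_X$ with $C_j$ depending on $j$), the quantity $\inf_{x\in F}|P_jf(x)|$ varies continuously with $f$, so a small ball around $f$ stays inside $\mathcal O_{m,J}$. Density: given an arbitrary $f\in X$ and $\delta>0$, consider $f+\eta g$ for a small $\eta>0$. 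On $F$ we have $P_j(f+\eta g)(x)=P_jf(x)+\eta P_jg(x)$ with $P_jg(x)\ge 0$; by the $\liminf$ property of $g$, for infinitely many $j$ we have $P_jg(x)\ge 2^{\left(\frac{d-sp-\alpha}{p}-\frac1{2m}\right)j}$ for all $x\in F$ simultaneously (the bound in Lemma \ref{lem:strongly1} comes, through Theorem \ref{thm:saturatingdiv1}, from a uniform lower bound $n^{-2}2^{(Nn+t)(d-sp-\alpha')/p}$ valid for all $x\in G$ at once, so the $\liminf$ is uniform on $F$). Meanwhile $|P_jf(x)|$ grows at most like $2^{\left(\frac dp-s\right)j}$ in the worst case, but more to the point, along the same subsequence $j$ we may absorb it: choosing $\eta>0$ small gives $\|\eta g\|\le\delta$, and for $j$ large in that subsequence $\eta P_jg(x)$ dominates, so $|P_j(f+\eta g)(x)|\ge \tfrac12\eta\,2^{\left(\frac{d-sp-\alpha}{p}-\frac1{2m}\right)j}> 2^{\left(\frac{d-sp-\alpha}{p}-\frac1m\right)j}$ for all $x\in F$ once $j$ exceeds some threshold (which we can take $\ge J$). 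Hence $f+\eta g\in\mathcal O_{m,J}$, proving density.

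The main obstacle is the density step, specifically controlling the interference from $P_jf(x)$: one cannot in general bound $|P_jf(x)|$ from above by something smaller than $2^{\left(\frac dp-s\right)j}$, and if $\frac dp-s>\frac{d-sp-\alpha}{p}$ (which happens exactly when $\alpha>0$) the perturbation term can be swamped at some scales. The resolution is that we only need the inequality to hold at \emph{one} scale $j\ge J$, and we get to pick that scale along the subsequence where $P_jg(x)$ is genuinely large and uniform on $F$; at such scales, after fixing $\eta$, the term $\eta P_jg(x)$ is of order $2^{\left(\frac{d-sp-\alpha}{p}-\frac1{2m}\right)j}$ which for large $j$ beats $2^{\left(\frac{d-sp-\alpha}{p}-\frac1m\right)j}$ regardless of the (bounded, scale-$j$-depending) size of $P_jf(x)$ — wait, this is not automatic. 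The correct fix is to instead replace $g$ by a rescaled copy whose divergence set is still $F$ but which can be added with a \emph{fixed} small norm while making $P_jg$ as large as we like at suitable scales; concretely, one applies Lemma \ref{lem:strongly1} not to $F$ but iteratively, or one notes that the construction in Theorem \ref{thm:saturatingdiv1} can be run with the exponent pushed up to any value $<\frac dp-s$, so choosing that exponent larger than $\frac{d-sp-\alpha}{p}$ by a definite amount and then scaling down by $\delta$ still leaves, at the relevant scales, a perturbation of size $\gg 2^{\left(\frac{d-sp-\alpha}{p}-\frac1m\right)j}$. Carrying this bookkeeping carefully — choosing the perturbation's target exponent, its norm, and the threshold scale in the right order — is the delicate part; everything else is routine Baire category together with the already-established Lemmas \ref{lem:wavelet1} and \ref{lem:strongly1}.
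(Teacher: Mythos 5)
Your overall architecture (a residual set written as a countable intersection of ``for some $j\geq J$, $|P_jf|$ is uniformly large on $F$'' conditions, with openness via continuity of $h\mapsto P_jh$ on compacts and density via adding a small multiple of the saturating function from Lemma \ref{lem:strongly1}) is the right shape, and the openness step is fine. But the density step has a genuine gap, which you half-identify and then do not repair. When you perturb an \emph{arbitrary} $f\in X$ to $f+\eta g$, the term $P_jf(x)$ can have size up to $C\|f\|2^{(\frac dp-s)j}$ with arbitrary sign, while $\eta P_jg(x)$ is only of order $2^{\frac{d-sp-\alpha}{p}j}$; since $\alpha>0$ these exponents differ, so at every scale $j\geq J$ there may exist points $x\in F$ where $P_jf(x)$ cancels $\eta P_jg(x)$ down to something below the threshold. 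Nothing in your argument excludes this happening at some point of $F$ for every $j$. Your proposed fix --- rerunning Theorem \ref{thm:saturatingdiv1} with ``the exponent pushed up to any value $<\frac dp-s$'' --- is not available: on a set $F$ with $\dimp(F)=\alpha$ the $\liminf$ exponent cannot exceed $\frac{d-sp-\alpha}{p}$ (this is exactly the content of the upper box-counting estimate in the proof of Proposition \ref{prop:packing1}), and even if it were, a larger perturbation still does not prevent sign cancellation against an uncontrolled $P_jf(x)$ of comparable or larger size.

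The paper resolves this by \emph{not} perturbing an arbitrary $f$: it takes a dense sequence $(f_l)$ with $f_l\in V_l$, so that $P_jf_l=f_l$ for $j\geq l$ is a fixed bounded function independent of $j$, and sets $g_l=f_l+\frac1l f$ with $f$ the saturating function. Then $|P_jg_l(x)|\geq \frac1l|P_jf(x)|-\|f_l\|_\infty$, and the saturating term dominates for large $j$ with no cancellation issue; the residual set is $\bigcap_m\bigcup_{l\geq m}B_X(g_l,\delta_{l,m})$, where $\delta_{l,m}$ is chosen so that the uniform lower bound at one fixed scale $j_{l,m}\geq m$ survives the perturbation. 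This use of the density of $\bigcup_l V_l$ (hence the hypothesis $p,q<\infty$) is the missing idea in your write-up; without it, controlling the interference for general $f$ essentially forces you into the measure-theoretic (prevalence) argument of Lemma \ref{lem:prevalencesaturatingdiv}, which is a different and substantially harder proof. A secondary point: the uniformity over $F$ of the lower bound for $P_jg(x)$ is not immediate from Lemma \ref{lem:strongly1} either, since the constant $2^{-(l+u)}$ there depends on which $G_{l,u}$ contains $x$; one needs the compactness of $F$ together with continuity of $P_jg_l$ to extract a uniform threshold $J_l$, as the paper does.
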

\begin{proof}
Let $f\in X$ satisfying \eqref{eq:residuality1}. Let $(f_l)$ be a dense sequence in $X$ such that $f_l\in V_l$ for all $l$
(recall that we assume $p,q\neq\infty$). Let finally $(\alpha_l)$ be a sequence decreasing to $\alpha$. We define $g_l=f_l+\frac 1l f$. Then, for all $l\geq 1$ and all $x\in F$, there exists an integer $J_{l,x}$ such that, for all $j\geq J_{l,x}$, 
$$|P_j g_l(x)|>2^{\frac{d-sp-\alpha_l}p j}.$$
By compactness of $F$, $J_l:=\max\left\{J_{l,x};\ x\in F\right\}$ does exist. Let now $m\geq 1$ and set $j_{l,m}=\max(J_l,m)$.  There exists $\delta_{l,m}>0$ such that, for all $g\in B_X(g_l,\delta_{l,m})$, for all $x\in F$, 
$$|P_{j_{l,m}}g(x)|\geq  2^{\frac{d-sp-\alpha_l}p j_{l,m}}.$$
Define $\mathcal R_F=\bigcap_{m\geq 1}\bigcup_{l\geq m}B_X(g_l,\delta_{l,m})$ which is a residual subset of $X$. Pick $g\in\mathcal R_F$ and $m\geq 1$.
There exists $l\geq m$ such that $g\in B_X(g_l,\delta_{l,m})$. Then there exists $j\geq m$ such that, for all  $x\in F$, 
$$|P_{j}g(x)|\geq  2^{\frac{d-sp-\alpha_l}p j}\geq 2^{\frac {d-sp-\alpha_m}p j}.$$
Since $(\alpha_m)$ goes to $\alpha$, we are done.
\end{proof}
\begin{remark}
The functions in $\mathcal R_F$ have a stronger property. Indeed, they satisfy
$$\limsup_j \inf_{x\in F}\frac{\log |P_j f(x)|}{j\log 2}\geq \frac{d-sp-\alpha}{p}.$$
\end{remark}

We are now ready to prove the residual part of Theorem \ref{thm:main1} (ii).

\begin{proof}[Proof of Theorem \ref{thm:main1}, part (ii), residuality]
We start as in subsection \ref{sec:prevalence} by fixing $d'\in(d-sp,d)$, $K$ the compact set given by Proposition \ref{prop:bigcantorset}
and $(F_\beta)_{\beta\in \left(0,\frac dp-s\right)}$ a decreasing family of compact 
subsets of $K$, such that, for all $\beta\in \left(0,\frac dp-s\right)$, 
$$\dimh(F_\beta)=\dimp(F_\beta)=d-sp-\beta p\textrm{ and }\mathcal H^{d-sp-\beta p}(F_\beta)>0.$$
Let also $(\beta_k)$ be a dense sequence in  $\left(0,\frac dp-s\right)$. For any $k\geq 1$, Lemma \ref{lem:residualfixedlevel} yields the existence of a residual set $Y_k$
such that, for all $f\in Y_k$, for all $x\in F_{\beta_k}$, 
$$\limsup_j \frac{\log |P_j f(x)|}{j\log 2}\geq\beta_k.$$
Set $Y=\bigcap_{k\geq 1}Y_k$ (which remains residual) and let $\beta\in  \left(0,\frac dp-s\right)$, $f\in Y$.
Taking $(\beta_{\phi(k)})$ a subsequence of $(\beta_k)$ increasing to $\beta$, we get for all $k$ and all $x\in F_\beta\subset F_{\beta_{\phi(k)}}$,
$$\limsup_j \frac{\log |P_j f(x)|}{j\log 2}\geq\beta_{\phi(k)}.$$
Hence,
$$\limsup_j \frac{\log |P_j f(x)|}{j\log 2}\geq\beta.$$
Now, we can decompose $F_\beta$  into
$$F_\beta= \big(F_\beta\cap E^-(\beta,f)\big)\cup\bigcup_{\gamma>\beta} \big(F_\beta\cap E^-(\gamma,f)\big).$$
Again an application of Proposition \ref{prop:dimh} yields $\dimh\big(F_\beta\cap E(\beta,f)\big)=d-sp-\beta p$. This in turn implies the conclusion, 
since
$$d-sp-\beta p\leq \dimh\big(F_\beta\cap E^-(\beta,f)\big)\leq \dimh\big(E^-(\beta,f)\big)\leq  d-sp-\beta p.$$
\end{proof}

\subsection{Prevalence of multifractal functions - the divergence case}

We need a substitute to Lemma \ref{lem:residualfixedlevel} where we replace residuality by prevalence.

\begin{lemma}\label{lem:prevalencesaturatingdiv}
Let $d'\in(d-sp,d)$ and let $K$ be the compact set given by Proposition \ref{prop:bigcantorset}.
For all $\alpha\in(0,d-sp)$, for all $F\subset K$ with $\dimp(F)=\alpha$,  there exists a prevalent set $Y_F\subset X$ such that, for all $f\in Y_F$, for all $x\in F$, 
$$\limsup_j \frac{\log |P_j f(x)|}{j\log 2}\geq \frac{d-sp-\alpha}p.$$
\end{lemma}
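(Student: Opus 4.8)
The plan is to mimic the structure of the proof of Lemma \ref{lem:residualfixedlevel}, replacing the Baire-category machinery by the finite-dimensional translation argument that characterizes prevalence. First I would invoke Lemma \ref{lem:strongly1} to fix a single function $g\in X$ with $\|g\|\leq 1$ such that $P_j g(y)\geq 0$ for all $y\in K$ and all $j$, and such that $\liminf_j \frac{\log|P_j g(x)|}{j\log 2}\geq\frac{d-sp-\alpha}p$ for all $x\in F$. This $g$ plays the role of the "probe" direction: the finite-dimensional subspace witnessing prevalence will be (essentially) $V=\vect(g)$, one-dimensional, and I will show that for every $f\in X$, for Lebesgue-almost every $r\in\RR$, the perturbation $f+rg$ lies in the target set $Y_F$. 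Actually, since we need a $\limsup$ lower bound that is stable under a perturbation of size $r\neq 0$, it is cleaner to note that for $r\neq 0$ one has $P_j(f+rg)(x)=P_j f(x)+rP_j g(x)$, and since $P_j g(x)$ blows up at speed $\geq 2^{\frac{d-sp-\alpha}p j}$ along a subsequence while $\frac{\log|r|}{j\log 2}\to 0$, the term $rP_j g(x)$ dominates unless $P_j f(x)$ nearly cancels it.

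The key step, then, is a cancellation-avoidance argument: I would show that for each fixed $f$ and each $x\in F$, the set of $r\in\RR$ for which $\limsup_j \frac{\log|P_j(f+rg)(x)|}{j\log 2}<\frac{d-sp-\alpha}p$ is Lebesgue-null, and moreover that one can make this uniform in $x\in F$ by a compactness argument so as to get a single null set of "bad" $r$. The heuristic is standard (two distinct values $r_1\neq r_2$ cannot both produce near-cancellation at the same large scale $j$, since $(r_1-r_2)P_j g(x)$ is large), but turning the pointwise statement into a statement uniform over the compact set $F$ — which is what is needed to land inside a genuinely prevalent (translation-invariant) set rather than merely an "$x$-by-$x$" full-measure set — is where care is required. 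The clean way is to fix, for each $m\geq 1$, a dense sequence $(f_l)\subset X$ with $f_l\in V_l$, form $g_l = f_l + \text{(small multiple of } g)$ exactly as in Lemma \ref{lem:residualfixedlevel}, obtain via compactness of $F$ an integer $J_l$ and a radius $\delta_{l,m}$ such that $|P_{j_{l,m}}h(x)|\geq 2^{\frac{d-sp-\alpha_l}p j_{l,m}}$ for all $h\in B_X(g_l,\delta_{l,m})$ and $x\in F$, and then run the prevalence version of the argument on the set $Y_F=\bigcap_m\bigcup_{l\geq m}B_X(g_l,\delta_{l,m})$.

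The main obstacle I anticipate is precisely the interface between prevalence and the $\limsup$ quantifier that has to hold simultaneously for all $x\in F$: a residual set is closed under countable intersection and open neighborhoods make the Baire argument transparent, whereas for prevalence the natural object is "$f+v\in Y$ for a.e. $v\in V$", and one must check that the open-ball construction of Lemma \ref{lem:residualfixedlevel} is compatible with the translation structure — i.e. that $B_X(g_l,\delta_{l,m})$ can be reached from an arbitrary $f$ by adding a suitable element of the fixed finite-dimensional probe space. Since the $g_l$ already contain a multiple of the single probe function $g$ and the $f_l$ are dense, the resolution is to take the probe space to be $V=\vect(g)$ and absorb the approximation of $f$ by $f_l$ into the fact that $Y_F$, being a dense $G_\delta$ (hence prevalent by the residuality argument of Lemma \ref{lem:residualfixedlevel} only if $X$ is Polish — which it is) together with the inclusion property of prevalence, need not itself be revisited; concretely I would simply prove that $Y_F$ as defined is $1$-prevalent by verifying that for every $f\in X$ and a.e. $r\in\RR$, $f+rg\in\bigcap_m\bigcup_{l\geq m}B_X(g_l,\delta_{l,m})$, which reduces — after unwinding — to the one-dimensional cancellation lemma above applied along the subsequence of scales $j_{l,m}$.
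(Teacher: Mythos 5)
Your proposal contains a genuine gap, and it sits exactly at the point you yourself flag as delicate: passing from ``for each fixed $x\in F$, almost every $r$ is good'' to ``for almost every $r$, every $x\in F$ is good''. With a one-dimensional probe space $V=\vect(g)$ this passage fails quantitatively. To make the statement uniform over $F$ one covers $F$ (or the sets $G_u$ of controlled upper box dimension) at scale $j$ by roughly $2^{\kappa j}$ cubes; for each cube the cancellation-avoidance argument excludes an interval of parameters $r$ of length about $2^{(\beta-\gamma)j}$, so the total bad set at scale $j$ has measure up to $2^{\kappa j}2^{(\beta-\gamma)j}$, which does not tend to $0$ unless $\kappa<\gamma-\beta$ --- and $\kappa$ is forced on you by $\dboxsup(G)$ and the H\"older exponent $\theta$ of Lemma \ref{lem:wavelet3}, so it cannot be assumed small. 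This is precisely why the paper does \emph{not} use a single probe function: it chooses an integer $J$ with $(J-1)(\gamma-\beta)>\kappa$ and builds, via Theorem \ref{thm:saturatingdiv2}, $J$ functions $f_0,\dots,f_{J-1}$ whose wavelet coefficients live on disjoint arithmetic progressions of scales, so that differencing consecutive partial sums isolates a single block $c_kQ_{jJN+kN}f_k$. The bad parameter set for one cube is then contained in a cube of $[0,1]^{J-1}$ of side $C2^{(\beta-\gamma)jJN}$, hence of measure $C2^{(\beta-\gamma)(J-1)jJN}$, and the union over the $2^{\kappa jJN}$ cubes is summable. The dimension of the probe space is thus an essential, tunable parameter, not a convenience.

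Your fallback --- declaring that the residual set $Y_F=\bigcap_m\bigcup_{l\geq m}B_X(g_l,\delta_{l,m})$ from Lemma \ref{lem:residualfixedlevel} is $1$-prevalent because $X$ is Polish --- is not valid: residuality and prevalence are independent notions (a dense $G_\delta$ can be shy), and there is no reason the line $\{f+rg:\ r\in\RR\}$ should meet $\bigcup_{l\geq m}B_X(g_l,\delta_{l,m})$ in a set of full Lebesgue measure; the radii $\delta_{l,m}$ are not under your control. A secondary issue is that your cancellation lemma compares two parameters through witnesses $x\neq y$ lying in the same small cube, which requires a modulus-of-continuity estimate for the projections together with the isolation of a single $Q_l$; the paper handles this with Lemma \ref{lem:wavelet3} applied to $Q_l$ and with the block structure of the $f_k$, whereas your single $g$ from Lemma \ref{lem:strongly1} has no such structure.
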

\begin{proof}
Let us set, for $0<\beta\leq (d-sp-\alpha)/p$ and $L$ a subset of $(0,1)^d$, 
\[Y_{\beta,L}=\left\{f\in X;\ \forall x\in L,\ \limsup_j \frac{\log |P_j f(x)|}{j\log 2}\geq\beta\right\}.\]
Since $Y_{(d-sp-\alpha)/p,F}=\bigcap_{\beta<(d-sp-\alpha)/p}Y_{\beta,F}$, we just need to prove that each $Y_{\beta,F}$ is prevalent, for $\beta<(d-sp-\alpha)/p$.
We fix such a $\beta$.
Lemma \ref{lem:wavelet3} tells us that there exists $\theta>0$ such that, for all $j\in \NN$, all $x,y\in\RR^d$ with $\|x-y\|\leq 2^{-\theta j}$, all $f\in X$,
$$|Q_j f(x)-Q_j f(y)|\leq  C_\beta 2^{\beta j}\|f\|_X.$$

We also consider $\gamma\in(\beta,(d-sp-\alpha)/p)$. 
There exists a sequence $(G_u)$ of subsets of $K$ such that $F\subset \bigcup_u G_u$
and $\dboxsup(G_u)<(d-sp-\gamma p)/p$.

Observing that $Y_{\beta,F}\supset\bigcap_u Y_{\beta,G_u}$, we just need to prove that each $Y_{\beta,G}$ is prevalent, where $G\subset K$ is such that $\dboxsup(G)<(d-sp-\gamma p)/p$. 
We fix such a set $G$. For each $j\geq 1$, $G$ is the union of at most $2^{\kappa j}$ cubes of width $2^{-\theta j}$ where the exact value
of $\kappa$ is unimportant for us. These cubes will be denoted by $O_{j,l}$.

Let $J\geq 1$ be any integer satisfying $(J-1)(\gamma-\beta)>\kappa$. We apply Theorem \ref{thm:saturatingdiv2} with these values of $G$ and $J$ to get functions $f_0,\dots,f_{J-1}$. 
In particular, for all $j$ large enough, for all $x\in G$, for all $k\in\{1,\dots,J-1\}$, 
$$|Q_{jJN+kN}f_k(x)|\geq 2^{\gamma jJN}$$
(recall that $N$ is a fixed integer which is defined during the construction of $K$).
We fix $f\in X$ and set, for $j\geq 1$,
\begin{align*}
 U_{j,l}&=\Big\{(c_1,\dots,c_{J-1})\in [0,1]^{J-1};\ \exists x\in O_{jJN,l},\ \forall m\in\{0,\dots,JN-1\},\\
 &\quad\quad\quad\quad\quad |P_{jJN+m}(f+c_1 f_1+\dots+c_{J-1}f_{J-1})(x)|\leq 2^{\beta jJN}\Big\}\\
 U_j&=\bigcup_l U_{j,l}.
\end{align*}
We claim that $\lambda_{J-1}(\liminf_j U_j)=0$ where $\lambda_{J-1}$ is the Lebesgue measure on $\RR^{J-1}$. Accept this claim for a while. Then, for almost all $(c_1,\dots,c_{J-1})\in [0,1]^{J-1}$, 
$f+c_1 f_1+\cdots+c_{J-1}f_{J-1}$ belongs to $\limsup_j U_j^c$. In particular, for all $j_0\in\NN$, there exists $j\geq j_0$ such that, for 
all $l$ and all $x\in O_{jJN,l}$, there exists $m\in\{0,\dots,JN-1\}$ with
\[ |P_{jJN+m}(f+c_1 f_1+\dots+c_{J-1}f_{J-1})(x)|\geq 2^{\beta jJN}. \]
Since $G\subset\bigcup_l O_{jJN,l}$, this implies that for infinitely many $j$, for all $x\in G$, 
there exists $m\in\{0,\dots,JN-1\}$ with
\[ |P_{jJN+m}(f+c_1 f_1+\dots+c_{J-1}f_{J-1})(x)|\geq 2^{\beta jJN}. \]
In other words, $f+c_1 f_1+\cdots+f_{J-1} f_{J-1}$ belongs to $Y_{\beta,G}$ for almost all $(c_1,\dots,c_{J-1})\in [0,1]^{J-1}$,
proving that $Y_{\beta,G}$ is prevalent. 

Thus, it remains to prove the claim. We just need to prove that $\lambda_{J-1}(U_j)\leq C2^{-\omega j}$ for some $C,\omega>0$.
Since $\lambda_{J-1}(U_j)\leq 2^{\kappa jJN}\max_l \lambda_{J-1}(U_{j,l})$, it remains to show that $\lambda_{J-1}(U_{j,l})\leq 2^{-(\kappa+\omega)jJN}$ for all $j$ and $l$. 
Let $(c_0,\dots,c_{J-1})$ and $(d_0,\dots,d_{J-1})$ belonging to $U_{j,l}$. There exist $x,y\in O_{jJN,l}$ such that, for all $m\in\{0,\dots,JN-1\}$, 
\begin{align*}
 |P_{jJN+m}(f+c_1 f_1+\dots+c_{J-1}f_{J-1})(x)|&\leq 2^{\beta jJN} \\
 |P_{jJN+m}(f+d_1 f_1+\dots+d_{J-1}f_{J-1})(y)|&\leq 2^{\beta jJN}.
\end{align*}
We look at these two inequalities for $m=kN$ and $m=kN+1$, $k=1,\dots,J-1$. Taking the difference and using the triangle inequality, we get
\begin{align*}
|Q_{jJN+kN} f(x)+c_k Q_{jJN+kN}f_k(x)|&\leq 2\cdot 2^{\beta jJN}\\
|Q_{jJN+kN} f(y)+d_k Q_{jJN+kN}f_k(y)|&\leq 2\cdot 2^{\beta jJN}.
\end{align*}
Using another time the triangle inequality and writing
\begin{align*}
c_k Q_{jJN+kN}f_k(x)-d_k Q_{jJN+kN}f_k(y)&=(c_k-d_k)Q_{jJN+kN}f_k(x)+\\
&\quad d_k\big(Q_{jJN+kN} f_k(x)-Q_{jJN+kN}f_k(y)\big), 
\end{align*}
we get 
\begin{align*}
|c_k-d_k| \cdot |Q_{jJN+kN}f_k(x)|&\leq 4\cdot 2^{\beta jJ}+\big|Q_{jJN+kN}f(x)-Q_{jJN+kN}f(y)\big|+\\
&\quad\quad |d_k|\cdot\big|Q_{jJN+kN}f_k(x)-Q_{jJN+kN}f_k(y)\big|\\
&\leq C2^{\beta jJN}
\end{align*}
since $\|x-y\|\leq 2^{-\theta jJN}$.
Hence, $|c_k-d_k|\leq C2^{(\beta-\gamma)jJN}$. Therefore, the set of $c=(c_1,\dots,c_{J-1})\in[0,1]^{J-1}$ belonging to $U_{j,l}$ is contained in a cube of width $C2^{(\beta-\gamma)jJN}$.
Hence, $\lambda_{J-1}(U_{j,l})\leq C2^{(\beta-\gamma)(J-1) J Nj}$. Because $(\gamma-\beta)(J-1)>\kappa$, we are done.
\end{proof}

The proof of the prevalence part of Theorem \ref{thm:main1}, (ii), follows now from an argument similar to that of the residual part. We omit the details.

\begin{remark}\label{rem:prevalenceprecised}
 In fact, our proof of Lemma \ref{lem:prevalencesaturatingdiv} shows a stronger result: 
 since $\lambda_{J-1}(U_j)\leq 2^{-\delta j}$ for some $\delta>0$, we in fact have $\lambda_{J-1}(\liminf_j U_j)=0$.
 In particular, we have shown that, for all $G\subset K$ with $\dboxsup(G)<\alpha$, for all $\beta<(d-sp-\alpha)/p$, there exists $J\geq 1$ such that the set of functions
 $f\in X$ satisfying 
 $$\liminf_{j\to+\infty}\sup_{k=0,\dots,JN-1}\frac{\log |P_{jJN+k}f(x)|}{jJN\log 2}\geq \beta$$
 for all $x\in G$ is prevalent. We are not so far from the proof that the set of functions satisfying (iii) in Theorem \ref{thm:main1} is prevalent. See also Section \ref{sec:remarks}.
\end{remark}

\subsection{The case of convergence}

We now indicate briefly how to modify the previous work to obtain Theorem \ref{thm:main2} for $\beta\neq -s$. 
We only consider the most difficult case (the existence of a prevalent set of multifractal functions) the other cases being left to the reader.
The analogue  of Theorem \ref{thm:saturatingdiv1} reads:

\begin{theorem}\label{thm:saturatingconv1}
Let $d'\in (0,d)$ and let $K$ be given by Proposition \ref{prop:bigcantorset}. For all $\alpha\in(\max(0,d-sp),d')$,
for all $G\subset K$ with $\dboxsup(G)<\alpha$, there exists $f\in B_p^{s,1}(\RR^d)$, $\|f\|\leq 1$ such that
\begin{itemize}
\item for all $x\in K$, $(P_j f(x))$ converges.
\item for all $x\in K$, for all $j\in\NN$, $R_j f(x)\geq 0$;
\item for all $x\in G$, $\liminf_j \frac{\log |R_j f(x)|}{j\log 2}\geq\frac{d-sp-\alpha}{p}$.
\end{itemize}
\end{theorem}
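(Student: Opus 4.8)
The plan is to mimic the proof of Theorem~\ref{thm:saturatingdiv1}, replacing the partial reconstruction operators $P_j$ by the remainders $R_j$, and using the constraint $\alpha>\max(0,d-sp)$ to guarantee that the series we build actually converges at every point of $K$. Let $\alpha'\in(\dboxsup(G),\alpha)$ and let $\Gamma_n\subset\Theta_n$ be the dyadic cubes of width $2^{-(t+Nn)}$ meeting $G$, so that $\card(\Gamma_n)\leq C_G2^{(t+Nn)\alpha'}$. As before, put
\[f_n=2^{(Nn+t)\times\frac{d-sp-\alpha'}{p}}\sum_{\lambda\in\Gamma_n}\psi_{\mu(\lambda)}^{(1)},\]
which has $\|f_n\|_{B_p^{s,1}}\leq C_G$ exactly as in Theorem~\ref{thm:saturatingdiv1} (the $\mu(\lambda)$ are distinct cubes of generation $Nn$, and $d-sp-\alpha'<0$ plays no role in this norm bound — the $B_p^{s,1}$ norm of a single block $f_n$ is controlled by its $\ell^p$ content alone). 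Now, however, because $d-sp-\alpha'<0$, the normalizing exponents $\frac{d-sp-\alpha'}{p}(Nn+t)$ tend to $-\infty$, so the tail $\sum_{n\geq j}f_n$ is summable; this is precisely what lets us work with $R_j$ rather than $P_j$.

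The key point is the decomposition of the remainder at a point $x\in K$. Since each $f_n$ is built from wavelets $\psi_{\mu(\lambda)}^{(1)}$ of generation exactly $Nn$, we have $R_jf(x)=\sum_{n\geq n_0(j)}c_n\,Q_{Nn}f(x)$-type contributions; more precisely, for $j\in(Nn,N(n+1)]$,
\[R_jf(x)=\sum_{l\geq n+1}l^{-2}\,2^{(Nl+t)\times\frac{d-sp-\alpha'}{p}}\sum_{\lambda\in\Gamma_l}\psi_{\mu(\lambda)}^{(1)}(x).\]
Each term is nonnegative because $\psi_{\mu(\lambda)}^{(1)}(x)\geq 0$ for $x\in K$ by Proposition~\ref{prop:bigcantorset} (either $\geq 1$ or $=0$), which gives the second bullet; and the series converges absolutely by the geometric decay of $2^{(Nl)\frac{d-sp-\alpha'}{p}}$ against $\card(\Gamma_l)\leq C_G2^{Nl\alpha'}$ and the estimate $\sum_{\lambda\in\Gamma_l}\psi_{\mu(\lambda)}^{(1)}(x)\leq C$ (or, since for fixed $x$ at most one $\lambda\in\Theta_l$ contains $x$, the inner sum is at most $\|\psi\|_\infty$), which gives the first bullet. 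For the third bullet, fix $x\in G$; then for every $l$ there is a (unique) $\lambda\in\Gamma_l$ with $x\in\lambda$, hence $\psi_{\mu(\lambda)}^{(1)}(x)\geq 1$, so the single $l=n+1$ term already gives
\[R_jf(x)\geq (n+1)^{-2}2^{(N(n+1)+t)\times\frac{d-sp-\alpha'}{p}},\]
and since $d-sp-\alpha'<0$ this lower bound is positive but tends to $0$, with $\log|R_jf(x)|/(j\log 2)\to \frac{d-sp-\alpha'}{p}\geq\frac{d-sp-\alpha}{p}$ as $j\to\infty$ (here $j\asymp N(n+1)$). Letting $\alpha'\downarrow\dboxsup(G)$ is not needed since $\alpha'<\alpha$ already suffices.

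The one genuinely new thing to check, compared with the divergence case, is that $(P_jf(x))$ \emph{converges} for every $x\in K$ — not just for $x\in G$. This follows from the same absolute-convergence estimate: for arbitrary $x\in K$ the inner sum $\sum_{\lambda\in\Gamma_l}\psi_{\mu(\lambda)}^{(1)}(x)$ is bounded (by Lemma~\ref{lem:wavelet1}, or simply because at most one cube of $\Theta_l$ contains $x$), while $l^{-2}2^{Nl\frac{d-sp-\alpha'}{p}}\card(\Gamma_l)\leq C_Gl^{-2}2^{Nl\frac{\alpha'-\alpha'}{p}}$\,---\,wait, more carefully $l^{-2}2^{Nl\frac{d-sp-\alpha'}{p}}\cdot 2^{Nl\alpha'}=l^{-2}2^{Nl\frac{d-sp-\alpha'+p\alpha'}{p}}=l^{-2}2^{Nl\frac{d-sp+(p-1)\alpha'}{p}}$; for $p\geq1$ this exponent can be positive, so one must instead not bound the inner sum by $\card(\Gamma_l)$ but keep it as $O(1)$ for the fixed point $x$ (only finitely many, in fact at most one, cube of $\Theta_l$ contains a given $x$), giving a convergent series $\sum_l l^{-2}2^{Nl\frac{d-sp-\alpha'}{p}}\|\psi\|_\infty$. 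Thus $\sum_n n^{-2}f_n(x)$ and all its tails converge for every $x\in K$, so $P_jf(x)$ has a limit there. I expect this convergence bookkeeping — distinguishing the pointwise bound ``at most one active cube'' from the global bound ``$C_G2^{Nl\alpha'}$ active cubes'' — to be the only subtle point; everything else is a verbatim transcription of the proof of Theorem~\ref{thm:saturatingdiv1}. The variant analogous to Theorem~\ref{thm:saturatingdiv2} (splitting $f=\sum_k f_k$ along residues mod $J$) then carries over without change, and feeds into the prevalence argument exactly as in Section~\ref{sec:prevalence}.
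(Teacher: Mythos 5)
Your proof is correct and is essentially the paper's own argument: the same blocks $f_n$, the same norm bound, and the same key observation that for a fixed $x\in K$ at most one (and in general boundedly many) of the translated wavelets in each block is active, which together with $d-sp-\alpha'<0$ gives absolute convergence of the tails and hence of $(P_jf(x))$. The only adjustment needed is your choice of $\alpha'$: take $\alpha'\in\big(\max(\dboxsup(G),d-sp),\alpha\big)$ rather than just $\alpha'\in(\dboxsup(G),\alpha)$, since the inequality $d-sp-\alpha'<0$ that you invoke throughout is not automatic from $\alpha'>\dboxsup(G)$; this interval is nonempty precisely because $\alpha>\max(0,d-sp)$.
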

\begin{proof}
 Let $\alpha'\in \big(\max(\dboxsup(G),d-sp),\alpha\big)$. Keeping the notations of the proof of Theorem \ref{thm:saturatingdiv1}, 
 we still set 
 $$f_n=2^{(Nn+t)\times\frac{d-sp-\alpha'}p}\sum_{\lambda\in \Gamma_n}\psi_{\mu(\lambda)}^{(1)}$$
 and $f=\sum_{n\geq 1}n^{-2}f_n$. The convergence of $(P_jf(x))$ for all $x\in\RR^d$ is ensured by the inequality $\alpha'>d-sp$
 (recall that the wavelets have compact support, so that $\psi_{\mu(\lambda)}^{(1)}(x)\neq 0$ for a finite number of $\lambda\in\Gamma_n$,
 this bound being uniform in $n$ and $x$). Moreover, 
for $x\in K$ and $j\in (Nn,N(n+1)]$, 
 $$R_jf(x)=\sum_{l\geq n}l^{-2}2^{(Nl+t)\times\frac{d-sp-\alpha'}{p}}\sum_{\lambda\in \Gamma_l} \psi_{\mu(\lambda)}^{(1)}(x)$$
 and this is always nonegative. Finally, if $x$ belongs to $G$, then $x$ belongs to some $\lambda\in\Gamma_{n}$ so that 
 $$R_j f(x)\geq n^{-2}2^{(Nn+t)\times\frac{d-sp-\alpha'}{p}}.$$
 This shows that 
 $$\liminf_j \frac{\log R_j f(x)}{j\log 2}\geq\frac{d-sp-\alpha'}p\geq\frac{d-sp-\alpha}p.$$
\end{proof}

We then deduce the following lemma.

\begin{lemma}\label{lem:prevalencesaturatingconv}
Let $d'\in (0,d)$, let $K$ be given by Proposition \ref{prop:bigcantorset} and let $\alpha\in\big[\max(0,d-sp),d'\big)\backslash\{0\}$. For all compact subsets $F$ of $K$ with $\dimp(F)=\alpha$, there exists a prevalent set
$Y_F\subset X$ such that, for all $f\in Y_F$, for all $x\in F$, either $(P_jf(x))$ diverges or $(P_jf(x))$ converges and
$$\limsup_j \frac{\log |R_j f(x)|}{j\log 2}\geq \frac{d-sp-\alpha}p.$$
\end{lemma}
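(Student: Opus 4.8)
The plan is to follow the proof of Lemma~\ref{lem:prevalencesaturatingdiv} step by step, with $R_j$ in place of $P_j$, but to run the whole argument on the blocks $Q_l$ rather than on $R_j$: indeed $Q_lf(x)$ is defined for every $f$, whereas $R_jf(x)$ only makes sense where $(P_jf(x))$ converges, and this is exactly what forces the dichotomy in the statement. As a first step I would record the convergence analogue of Theorem~\ref{thm:saturatingdiv2}: given $d'\in(0,d)$, $K,N,t$ from Proposition~\ref{prop:bigcantorset}, an integer $J\geq 1$, a number $\alpha_0\in(\max(0,d-sp),d')$ and $G\subset K$ with $\dboxsup(G)<\alpha_0$, there exist $f_0,\dots,f_{J-1}\in B_p^{s,1}(\RR^d)$ with $\|f_k\|\leq 1$ such that $(P_jf_k(x))$ converges for every $x$, $R_jf_k(x)\geq 0$ for all $x\in K$, $j\in\NN$ and $k$; such that $Q_{jJN+l}f_k\equiv 0$ for every $l\in\{0,\dots,JN-1\}\setminus\{kN\}$; and such that $\liminf_j\inf_{x\in G}\frac{\log|Q_{jJN+kN}f_k(x)|}{jJN\log 2}\geq\frac{d-sp-\alpha_0}{p}$. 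This is deduced from Theorem~\ref{thm:saturatingconv1} exactly as Theorem~\ref{thm:saturatingdiv2} was deduced from Theorem~\ref{thm:saturatingdiv1}: one keeps the functions $f_n$ from the proof of Theorem~\ref{thm:saturatingconv1} (each carrying wavelet coefficients only at generation $Nn$) and sets $f_k=\sum_{n\equiv k\,[J]}n^{-2}f_n$; the vanishing of $Q_{jJN+l}f_k$ for $l\neq kN$ is then automatic, since $Nn=jJN+l$ forces $N\mid l$, say $l=Nk'$ with $k'\in\{0,\dots,J-1\}$, and then $n=jJ+k'\equiv k'\,[J]$, whence $k'=k$.

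Now fix $F$ compact in $K$ with $\dimp(F)=\alpha$. For $\beta<\frac{d-sp-\alpha}{p}$ (hence $\beta<0$) and $L\subset(0,1)^d$, put
\[Y_{\beta,L}=\Big\{f\in X;\ \forall x\in L,\ (P_jf(x))\text{ diverges, or }(P_jf(x))\text{ converges and }\limsup_j\frac{\log|R_jf(x)|}{j\log 2}\geq\beta\Big\}.\]
Since $Y_F:=Y_{(d-sp-\alpha)/p,F}=\bigcap_m Y_{\beta_m,F}$ for any sequence $\beta_m\uparrow\frac{d-sp-\alpha}{p}$ and a countable intersection of prevalent sets is prevalent, it suffices to prove each $Y_{\beta,F}$ prevalent. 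Fix such a $\beta$, let $\theta>0$ be provided by Lemma~\ref{lem:wavelet3} for this $\beta$, choose $\gamma\in(\beta,\frac{d-sp-\alpha}{p})$, and use the definition of the packing dimension to write $F\subset\bigcup_u G_u$ with $G_u\subset K$ and $\dboxsup(G_u)<\alpha_0$ for a fixed $\alpha_0\in(\max(0,d-sp),d-sp-\gamma p)\cap(0,d')$; such an $\alpha_0$ exists because $\gamma<0$ forces $d-sp-\gamma p>\max(0,d-sp)$, because $\gamma<\frac{d-sp-\alpha}{p}$ leaves room to take $\dboxsup(G_u)$ slightly above $\alpha$ and still below $d-sp-\gamma p$, and because $\alpha<d'$. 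As $Y_{\beta,F}\supset\bigcap_u Y_{\beta,G_u}$, it is enough to prove $Y_{\beta,G}$ prevalent for one fixed $G=G_u$.

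For this I would copy the core of the proof of Lemma~\ref{lem:prevalencesaturatingdiv}. Cover $G$ by at most $2^{\kappa j}$ dyadic cubes $O_{j,l}$ of width $2^{-\theta j}$, discarding those disjoint from $G$ (the value of $\kappa$ is irrelevant), fix an integer $J$ with $(J-1)(\gamma-\beta)>\kappa$, and apply the construction of the first paragraph with this $G$, this $J$ and this $\alpha_0$, so that for all large $j$, all $x\in G$ and all $k\in\{1,\dots,J-1\}$ one has $|Q_{jJN+kN}f_k(x)|\geq 2^{\gamma jJN}$ (here $\frac{d-sp-\alpha_0}{p}>\gamma$ because $\alpha_0<d-sp-\gamma p$). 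For a fixed $f\in X$ set
\[U_{j,l}=\Big\{c\in[0,1]^{J-1};\ \exists x\in O_{jJN,l},\ \forall m\in\{0,\dots,JN-1\},\ |Q_{jJN+m}(f+c_1f_1+\dots+c_{J-1}f_{J-1})(x)|\leq 2^{\beta jJN}\Big\}\]
and $U_j=\bigcup_l U_{j,l}$. For $c,d\in U_{j,l}$ with witnesses $x,y\in O_{jJN,l}$, looking at the level $m=kN$, using $Q_{jJN+kN}f_{k'}\equiv 0$ for $k'\neq k$ to isolate $c_k$ and $d_k$, and bounding $|Q_{jJN+kN}f(x)-Q_{jJN+kN}f(y)|$ and $|Q_{jJN+kN}f_k(x)-Q_{jJN+kN}f_k(y)|$ by $C2^{\beta jJN}$ via Lemma~\ref{lem:wavelet3} (using $\|x-y\|\leq 2^{-\theta jJN}$; one may also transfer the lower bound $|Q_{jJN+kN}f_k(\cdot)|\geq 2^{\gamma jJN}$ from a point of $G$ lying in $O_{jJN,l}$ to $x$ by the same lemma), one gets $|c_k-d_k|\leq C2^{(\beta-\gamma)jJN}$ for each $k$, hence $\lambda_{J-1}(U_{j,l})\leq C2^{(\beta-\gamma)(J-1)jJN}$ and then $\lambda_{J-1}(U_j)\leq 2^{\kappa jJN}\max_l\lambda_{J-1}(U_{j,l})\leq C2^{(\kappa+(\beta-\gamma)(J-1))jJN}\leq C2^{-\delta j}$ for some $\delta>0$. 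Thus $\lambda_{J-1}(\liminf_j U_j)=0$, so for almost every $c\in[0,1]^{J-1}$ the function $g=f+c_1f_1+\dots+c_{J-1}f_{J-1}$ lies outside $U_j$ for infinitely many $j$; for such a $c$ and any $x\in G$ there are then infinitely many $j$ carrying some $m\in\{0,\dots,JN-1\}$ with $|Q_{jJN+m}g(x)|>2^{\beta jJN}$, and since $R_{jJN+m}g(x)-R_{jJN+m+1}g(x)=Q_{jJN+m}g(x)$, whenever $(P_jg(x))$ converges we obtain $\max\big(|R_{jJN+m}g(x)|,|R_{jJN+m+1}g(x)|\big)>\tfrac12 2^{\beta jJN}$ and hence $\limsup_j\frac{\log|R_jg(x)|}{j\log 2}\geq\beta$, while if $(P_jg(x))$ diverges the alternative in the definition of $Y_{\beta,G}$ holds trivially. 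So $g\in Y_{\beta,G}$ for almost every $c$, which shows $Y_{\beta,G}$ is $(J-1)$-prevalent, and the lemma follows.

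The routine parts — the estimate $\|f_k\|_{B_p^{s,1}}\leq 1$ (the $f_k$ are obtained from a norm-one function by keeping blocks that live at a disjoint set of generations), the convergence of $(P_jf_k(x))$ from the compact support of the wavelets, the positivity of $R_jf_k$ on $K$, and the arithmetic of the exponents — all copy the divergence case. The one genuinely new difficulty, and the step where I expect the care to be needed, is that $R_j$ is meaningless at divergence points of $(P_jf(x))$: this is what forces the whole construction to be carried out on the always-defined blocks $Q_l$, the passage from $Q$ to $R$ being postponed to the very last line through the identity $R_m-R_{m+1}=Q_m$, and this is exactly why the conclusion of the lemma takes the form of a dichotomy.
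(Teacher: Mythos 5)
Your proof is correct and follows essentially the same route as the paper: the paper also obtains the lemma by transplanting the proof of Lemma \ref{lem:prevalencesaturatingdiv}, using the $J$-function convergence analogue of Theorem \ref{thm:saturatingdiv2} built from Theorem \ref{thm:saturatingconv1}. The only (harmless) difference is in how the points where $(P_jf(x))$ diverges are handled: the paper restricts to $\hat G=G\cap\{x;\ (P_jf(x))\textrm{ converges}\}$ and runs the $U_{j,l}$ estimate with $R_j$ in place of $P_j$, whereas you run it on the always-defined blocks $Q_l$ and pass to $R_j$ only at the end via $Q_m=R_m-R_{m+1}$; both yield the stated dichotomy.
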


The proof of this lemma is almost identical to that of Lemma \ref{lem:prevalencesaturatingdiv}. We now set
\begin{align*}
 Y_{\beta,L}=&\Bigg\{f\in X;\ \textrm{for all }x\in L,  \textrm{ either }(P_jf(x))\textrm{ diverges or }\\
 &\quad (P_jf(x))\textrm{ converges and }\limsup_j \frac{\log |R_jf(x)|}{j\log 2}\geq \beta\Bigg\}.
\end{align*}
The proof that $Y_{(d-sp-\alpha)/p,F}$ is prevalent follows the same lines except that, $G$ and $f$ being fixed,
we define $\hat G=G\cap\{x;\ (P_j f(x)) \textrm{converges}\}$ and we consider the intersection of the cubes $O_{j,l}$ with $\hat G$.
Then, mimicking the proof of Lemma \ref{lem:prevalencesaturatingdiv} (but replacing $P_j$ by $R_j$), we get that, for almost all $(c_1,\dots,c_{J-1})\in [0,1]^{J-1}$,
for all $x\in\hat G$, 
$$\limsup_j \frac{\log|R_j(f+c_1f_1+\cdots+c_{J-1}f_{J-1})(x)|}{j\log 2}\geq \beta.$$
Since for all $x\in G\backslash \hat G$ and all $c_1,\dots,c_{J-1}\in [0,1]^{J-1}$, $P_j(f+c_1 f_1+\cdots+c_{J-1}f_{J-1})(x)$
diverges, we are done.

\medskip

From this lemma, mimicking the work done in Section \ref{sec:prevalence}, we deduce that for all $d'\in (0,d)$, there exists a prevalent subset of functions $Y_{d'}$ such that, for all $f\in Y_{d'}$, for all 
$\beta\in\left(-s+\frac{d-d'}p,\min\left(0,\frac dp-s\right)\right]\backslash\{0\}$,
$$\dimh\big(E(\beta,f)\big)=\dimp\big(E(\beta,f)\big)=d-sp-\beta p.$$
The somehow strange value $-s+\frac{d-d'}p$ comes from the change of variables $\beta=(d-sp-\alpha)/p$
which changes the inequality $\alpha<d'$ to $\beta>-s+(d-d')/p$.
The only important change to do is the decomposition of $F_\beta$. Indeed, it could be possible that, for a given function $f\in Y_{d'}$ and a given $x\in F_\beta$, 
the sequence $(P_jf(x))$ diverges. Setting $\mathcal G=\{x\in \RR^d;\ (P_jf(x))\textrm{ diverges}\}$, we now write
$$F_\beta=\big[F_\beta\cap E(\beta,f)\big]\cup \bigcup_{\gamma\in(\beta,0)}\big[F_\beta\cap E^-(\gamma,f)\big]\cup [F_\beta\cap \mathcal G].$$
We then apply both Proposition \ref{prop:dimh} and \ref{prop:dimhdv} to show that $\dimh\big(F_\beta\cap E(\beta,f)\big)\geq d-sp-\beta p$.

Finally, we get Part (iii) of Theorem \ref{thm:main2} 
for $\beta\in\left(-s,\min\left(0,\frac dp-s\right)\right]\backslash\{0\}$
by setting $Y=\bigcap_{n}Y_{d_n}$, where $(d_n)$ is a sequence increasing to $d$.

\subsection{The case of divergence and $s=0$}

So far, we did not prove Theorem \ref{thm:main1} (ii) and (iii) for $s=0$. Now, we cannot apply Proposition \ref{prop:bigcantorset} with any $d'\in (d-sp,d)$. We may just apply it
for any $d'\in (0,d)$. In the subsequent lemmas (\ref{lem:strongly1}, \ref{lem:residualfixedlevel}, \ref{lem:prevalencesaturatingdiv}), $\alpha$
is now only allowed to go until $d'$. Thus, our proofs just show that, for all $d'\in(0,d)$,
\begin{itemize}
 \item there exists a residual and prevalent subset $Y_{d'}$ of $X$ such that, for all $f\in Y_{d'}$, for all $\beta\in\big((d-d')/p,d/p\big)$, 
 $$\dimh(E^-(\beta,f))=d-\beta p.$$
 \item there exists $f_{d'}\in X$ such that, for all $\beta\in \big((d-d')/p,d/p\big)$, 
 $$\dimh(E(\beta,f))=\dimp(E(\beta,f))=d-\beta p.$$
\end{itemize}
To prove (ii) even if $s=0$, we fix a sequence $(d'_n)$ going to $d$ and we just set $Y=\bigcap_n Y_{d'_n}$. To prove (iii), we observe that, for a fixed $n\in\NN$,
the function $f_{d'_n}$ can be chosen with support in $(n,n+1)$. Then the function $f=\sum_{n\geq 1}n^{-2}f_{d'_n}$ will do the job.

\subsection{The case of convergence and $\beta=-s$}

We also did not prove Theorem \ref{thm:main2} (ii) and (iii) for $\beta=-s$. This was impossible with the method applied before because we constructed our sets $E(\beta,f)$ 
inside a set with packing dimension (strictly) smaller than $d$ whereas we hope to obtain $\dimp\big(E(-s,f)\big)=d$. Therefore,
we will need to enlarge our initial compact set. For simplicity, we again assume $d=1$. We concentrate ourselve on the existence of a strongly multifractal function.
The proof of (ii) will then follow by adapting arguments of the previous subsections.

Recall that a gauge function is a nondecreasing continuous function $\phi:\RR_+\to\RR_+$ satisfying $\phi(0)=0$. The $\phi$-Hausdorff outer measure of a set $E\subset \mathbb R^d$ is 
$$\mathcal H^{\phi}(E)=\lim_{\veps\to 0}\inf_{r\in R_\veps(E)}\sum_{B\in r}\phi(|B|),$$
$R_\veps(E)$ being the set of countable coverings of $E$ with balls $B$ of diameter $|B|\leq\veps$. 
The work done until now 
points out that it is sufficient to find a single function
$f\in B_p^{s,1}(\RR^d)$ satisfying $\mathcal H^\phi\big(\mathcal E^+(-s,f)\big)>0$  for some gauge function $\phi$ such that $\phi(s)=_0 o(s^\gamma)$ for
all $\gamma\in (0,1)$: since $\mathcal H^\phi\big(\mathcal E^{-}(\gamma,f)\big)=0$ for all $\gamma>-s$, this will imply that $\mathcal H^\phi\big(E(-s,f)\big)>0$
hence $\dimh\big(E(-s,f)\big)\geq 1$.

Let us proceed with the construction of the compact set following Section \ref{sec:compactset}. We still assume that $\psi\geq 1$ on the dyadic interval $K_0:=\left[\frac k{2^t},\frac{k+1}{2^t}\right]$
and that $\psi=0$ outside $[0,1]$. Let $(N_n)$ be a nondecreasing sequence of integers with $N_1>t$. We define inductively a decreasing sequence $(K_n)$ of compact subsets of $K_0$ such that
$K_n$ consists of $2^{N_1+\cdots+N_n-nt}$ closed dyadic intervals of width $2^{-(N_1+\cdots+N_n+t)}$ and each of these intervals may be written 
 $\left[\frac{k+l2^t}{2^{N_1+\cdots+N_n+t}},\frac{k+l2^t+1}{2^{N_1+\cdots+N_n+t}}\right]$ for some $l\in\ZZ$.
Let us assume that the construction has been done until $K_n$ and let us construct $K_{n+1}$. Let $\Theta_n$ be the set of closed dyadic intervals of width $2^{-(N_1+\cdots+N_n+t)}$
contained in $K_n$ and let $\lambda\in K_n$, $\lambda=\left[\frac{a}{2^{N_1+\dots+N_n+t}},\frac{a+1}{2^{N_1+\cdots+N_n+t}}\right]$. We define $\Theta_{n+1,\lambda}$ as
the set of the intervals $I_m=\left[\frac{k+m2^t}{2^{N_1+\dots+N_{n+1}+t}},\frac{k+m2^t+1}{2^{N_1+\dots+N_{n+1}+t}}\right]$ contained in $\lambda$, with $m\in\ZZ$. 
Since $I_m\subset\lambda$ if and only if 
$$2^{N_{n+1}-t}a-k2^{-t}\leq m\leq 2^{N_{n+1}-t}a-k2^{-t}+2^{N_{n+1}-t}-2^{-t}$$
there are exactly $2^{N_{n+1}-t}$ such intervals. Thus we may define 
$$K_{n+1}=\bigcup_{\lambda\in\Theta_n}\bigcup_{I\in \Theta_{n+1,\lambda}}I$$
which satisfies our requirements.

We then set $K=\bigcap_{n\geq 0}K_n$ and we prove that if we choose conveniently the sequence $(N_n)$, then $\mathcal H^\phi(K)>0$ where $\phi(s)=s \exp\left(\log^{3/4}\left(\frac 1s\right)\right)$.
Indeed, let $\mu$ be the mass distribution on $K$ so that each interval of $\Theta_n$ has mass $2^{-(N_1+\cdots+N_n-nt)}$. Let $I$ be an interval with small length and $n$ be the integer such that
$$\frac 1{2^{N_1+\cdots+N_{n+1}+t}}\leq |I|\leq \frac 1{2^{N_1+\cdots+N_n+t}}.$$
Then $I$ can intersect at most two of the intervals of $\Theta_n$ so that 
$$\mu(I)\leq \frac{2}{2^{N_1+\cdots+N_n-nt}}\leq \frac{2}{2^{N_1+\cdots+N_{n+1}+t}}\times 2^{N_{n+1}+(n+1)t}.$$
We fix the sequence $(N_n)$ by setting $N_n=n+t$. With this definition, it is easy to see that there exists $C>0$ such that, for all $n\geq 1$ large enough,
$$2^{N_{n+1}+(n+1)t}\leq \exp\left((N_1+\cdots+N_{n}+t)^{3/4}\log^{3/4} 2\right)\leq \exp\left(\log^{3/4}\left(\frac 1{|I|}\right)\right).$$
Therefore, $\mu(I)\leq \phi(|I|)$ and by the mass transference principle (see e.g. \cite[Lemma 3.18]{Dur15}), $\mathcal H^\phi(K)>0$.

We turn to the construction of $f$. For each $\lambda=\left[\frac{k+l2^{t}}{2^{N_1+\cdots+N_n+t}},\frac{k+l2^{t}+1}{2^{N_1+\cdots+N_n+t}}\right]\in\Theta_n$, we
set $\mu(\lambda)=\left[\frac l{2^{N_1+\cdots+N_n}},\frac {l+1}{2^{N_1+\cdots+N_n}}\right]$ and, as in the proof of Proposition \ref{prop:bigcantorset}, we observe that, 
for all $x\in K$, either $\psi_{\mu(\lambda)}(x)\geq 1$ if $x\in\lambda$ or $\psi_{\mu(\lambda)}(x)=0$. We then set
$$f_n=2^{-(N_1+\cdots+N_n)s}\sum_{\lambda\in\Theta_n}\psi_{\mu(\lambda)}$$
which belongs to $B_p^{s,1}(\RR)$ since
$$\|f_n\|\leq 2^{-(N_1+\cdots+N_n)s}2^{(N_1+\cdots+N_n-nt)/p}2^{(N_1+\cdots+N_n)\times\left(s-\frac 1p\right)}\leq 1.$$
We finally set $f=\sum_{n\geq 1}n^{-2}f_n\in B_p^{s,1}(\RR)$. As in the proof of Theorem \ref{thm:saturatingconv1}, it is easy to prove
that $(P_j(x))$ converges for all $x\in\RR^d$. Moreover, for $x\in K$ and $j\in (N_1+\cdots+N_{n-1},N_1+\cdots+N_{n}]$, one knows that
$$R_jf(x)\geq n^{-2}2^{-(N_1+\cdots+N_n)s}.$$
This gives
$$\liminf_j \frac{\log R_jf(x)}{j\log 2}\geq \liminf_n \frac{-s(N_1+\cdots+N_n)}{N_1+\cdots+N_{n+1}}=-s$$
since we have taken a sequence $(N_n)$ which does not increase too fast. Hence, for this function $f$, $K\subset \mathcal E^+(-s,f)$ and we are done.

\section{On the packing dimension of $\mathcal E^+(\beta,f)$}\label{sec:packing}

\subsection{The case $d-sp>0$ and $\beta>0$}
We first prove the first half of Theorem \ref{thm:badpacking} for $\beta>0$.
 We will follow a variant of the construction done in Proposition \ref{prop:bigcantorset}; here we will construct a subset $L$ of $K$ with different Hausdorff and
 packing dimension. Since the Hausdorff dimension of $L$ will be smaller than its packing dimension, we will be able to construct a saturating function $f$ such that,
 at some level $j$, for all $x\in L$, $P_j f(x)$ is bigger than the expected value if we look only at the packing dimension of $L$. Thanks to a very careful
 construction, this property will still hold for all levels $j$, leading to a function $f$ satisfying $  \dimp\left(\mathcal E^+(\beta,f)\right)>1-sp-\beta p$.
 
 As before, we assume that $\psi=\psi^{(1)}\geq 1$ on some
 $\left[\frac{k_0}{2^{t_0}},\frac{k_0+1}{2^{t_0}}\right]$ and that $\psi=0$ outside $[0,1]$. 
%
Let $u,v>1$ and let $N\geq t\geq t_0$ be two integers. 
We then consider $k$ such that $\psi\geq 1$ on $\left[\frac k{2^t},\frac{k+1}{2^t}\right]$ and we define the set $\Omega$ and the similarities $s_m$ as in the proof of Proposition \ref{prop:bigcantorset}.
We define a sequence $(N_k)$ by setting $N_0=1$, $N_{2k+1}=uN_{2k}$ and $N_{2k+2}=vN_{2k+1}$ so that $N_{2k}=(uv)^k$ and $N_{2k+1}=u(uv)^k$. We also define a sequence of compact sets 
$(L_j)$ by setting $L_0=\left[\frac k{2^t},\frac{k+1}{2^t}\right]$ and 
\begin{itemize}
 \item if $j\in [N_{2k},N_{2k+1})$, $L_{j+1}=\bigcup_{m\in\Omega}s_m(L_j)$;
 \item if $j\in [N_{2k+1},N_{2k+2})$, $L_{j+1}=s_1(L_j)$.
\end{itemize}
We finally define $L=\bigcap_j L_j$. It is easy to check that each $L_j$ consists of closed dyadic intervals of width $2^{-(t+Nj)}$. Denote by $\Gamma_j$ the set of these intervals
and by $M_j$ its cardinal number. By construction, $M_0=1$, $M_{N_{2k+2}}=M_{N_{2k+1}}$ whereas $M_{j+1}=2^{N-t}M_j$ provided $j$ belongs to $[N_{2k},N_{2k+1})$. An elementary computation
shows that
\begin{eqnarray*}
 M_{N_{2k}}&=&2^{(N-t)(u-1)\frac{(uv)^k-1}{uv-1}}\\
 M_{N_{2k+1}}&=&2^{(N-t)(u-1)\frac{(uv)^{k+1}-1}{uv-1}}.
\end{eqnarray*}
By the results of \cite{Baek06} on the dimension of homogeneous Cantor sets,
\begin{eqnarray*}
 \dimp(L)&=&\limsup_j \frac{\log M_j}{Nj \log 2}\\
 &=&\limsup_k \frac{\log M_{N_{2k+1}}}{N N_{2k+1}\log 2}\\
 &=&\frac{N-t}N\times\frac{(u-1)v}{uv-1}.
\end{eqnarray*}
Observe also, even if this will not be required for the sequel, that
\begin{eqnarray*}
 \dimh(L)&=&\liminf_j \frac{\log M_j}{Nj \log 2}\\
 &=&\limsup_k \frac{\log M_{N_{2k}}}{N N_{2k}\log 2}\\
 &=&\frac{N-t}N\times\frac{(u-1)}{uv-1}=\frac{\dimp(L)}{v}.
\end{eqnarray*}
We are now ready to construct the function $f$. For $l\geq 1$, define $c_{Nl}=2^{\frac{Nl}p(1-sp)}M_l^{-1/p}$ and 
$f_l=c_{Nl}\sum_{\lambda\in \Gamma_{l}}\psi_{\mu(\lambda)}$
so that $\|f_l\|\leq 1$. Recall that the construction of the sets $\mu(\lambda)$ together with that of the similarities $s_m$ ensure that,
for any $x\in L$, $f_l(x)\geq c_{Nl}$. As usual, $f\in B_p^{s,1}(\RR)$ is defined by $f=\sum_{l\geq 1}l^{-2}f_l$.

We shall control $\log P_j f(x)/j\log 2$ for all $x\in L$ and all $j\geq 1$. We fix $\eta>0$ and assume first that $j$ belongs to some $(NN_{2k+1}(1+\eta),N N_{2k+2}]$. In that
case, $j\in \big(Nl,N(l+1)\big]$ with $l=N_{2k+1}(1+\kappa)$ and $\kappa\in [\eta,v-1]$. Since in that case
$$M_l=M_{N_{2k+1}}=2^{(N-t)(u-1)\frac{(uv)^{k+1}-1}{uv-1}},$$
we get for all $x\in L$, 
\begin{eqnarray*}
 \frac{\log P_j f(x)}{j\log 2}&\geq&\frac{\log (c_{Nl}/l^2)}{j\log 2}\\
 &\geq&\frac{N N_{2k+1}(1+\kappa)(1-sp)-(N-t)(u-1)\frac{(uv)^{k+1}-1}{uv-1}}{pN N_{2k+1}(1+\kappa)}+o(1).
\end{eqnarray*}
Remembering that $N_{2k+1}=u(uv)^k$, we deduce
\begin{eqnarray*}
 \liminf_{\substack{j\to+\infty\\ j\in\bigcup_k [NN_{2k+1}(1+\eta),N N_{2k+2})}}\frac{\log P_j f(x)}{j\log 2}&\geq&\frac1p\times\left(1-sp-\frac{N-t}N\times\frac{(u-1)v}{(uv-1)(1+\eta)}\right)
\end{eqnarray*}

Assume now that $j$ belongs to some $(N N_{2k}, N N_{2k+1}(1+\eta)]$. In that case, we use that $P_j f(x)\geq c_{NN_{2k}}/N_{2k}^2$ to get
\begin{eqnarray*}
 \frac{\log P_j f(x)}{j\log 2}&\geq&\frac{N N_{2k}(1-sp)-(N-t)(u-1)\frac{(uv)^{k}-1}{uv-1}}{p N N_{2k+1}(1+\eta)}+o(1)
\end{eqnarray*}
so that
\begin{eqnarray*}
 \liminf_{\substack{j\to+\infty\\ j\in\bigcup_k [N N_{2k},NN_{2k+1}(1+\eta))}}\frac{\log P_j f(x)}{j\log 2}&\geq&
 \frac1{p(1+\eta)u}\times\left(1-sp-\frac{N-t}N\times\frac{u-1}{(uv-1)}\right)\\
\end{eqnarray*}

It is time now to choose $N$, $t$, $u$, $v$ and $\eta$ so that $\dimp(L)>1-sp-\beta p$ and, for all $x\in L$, $\liminf_j \log P_jf(x)/j\log 2\geq\beta$.
The real number $\beta\in\left(0,\frac 1p-s\right)$ being fixed, and using the change of variables $\alpha=1-sp-\beta p$, we are done if we may choose the parameters so that
\begin{align}
\label{eq:badpacking1}
\frac{N-t}{N}\times\frac{(u-1)v}{uv-1}&>\alpha\\
\label{eq:badpacking2}
1-sp-\frac{N-t}{N}\times\frac{(u-1)v}{(uv-1)(1+\eta)}&\geq 1-sp-\alpha\\
\label{eq:badpacking3}
\frac{1}{(1+\eta)u}\left(1-sp-\frac{N-t}{N}\times\frac{u-1}{uv-1}\right)&\geq 1-sp-\alpha.
\end{align}
Let $\veps>0$ and set $u=1+\veps$ and $v=1+\left(\frac1\alpha -1\right)\veps$. It is easy
to check that $(u-1)v/(uv-1)>\alpha$. Since $\left\{\frac{N-t}{N};\ N\geq t\geq t_0\right\}$ is dense in $(0,1)$, we may find two integers $N\geq t\geq t_0$ such that
\begin{equation*}
 \alpha\left(1+\veps^2\right)\geq\frac{N-t}{N}\times\frac{(u-1)v}{uv-1}>\alpha.
\end{equation*}
The right part of this inequality is \eqref{eq:badpacking1}. We finally choose $\eta>0$ such that 
\[\frac{N-t}{N}\times\frac{(u-1)v}{(uv-1)(1+\eta)}=\alpha.\]
This implies that \eqref{eq:badpacking2} is true and that $\eta=o(\veps)$. It remains to justify that \eqref{eq:badpacking3} is verified provided $\veps>0$ is small enough. Now
\begin{align*}
\frac{1}{(1+\eta)u}\left(1-sp-\frac{N-t}{N}\times\frac{u-1}{uv-1}\right)&\geq 
\frac{1}{1+\veps+o(\veps)}\left(1-sp-\frac{\alpha(1+\veps^2)}{1+\left(\frac 1\alpha -1\right)\veps}\right)\\
&\geq 1-sp-\alpha+sp\veps+o(\veps)\\
&\geq 1-sp-\alpha
\end{align*}
for small values of $\veps$. Observe the role of the assumption $s>0$ in the last line (Theorem \ref{thm:badpacking} is false when $s=0$ if we are working
with the Haar basis).

\subsection{The case $d-sp>0$ and $\beta<0$}
The proof of the case $\beta<0$ and still $1-sp>0$ of Theorem \ref{thm:badpacking} follows the same line. 
We do exactly the same construction for the compact set $L$ and for the function $f$. There is an additional difficulty now: 
we have to verify that the wavelet series is convergent at each point of $L$. This will be true provided there exists $\delta>0$ such that,
for all $l\in\mathbb N$, $c_{Nl}\leq 2^{-\delta Nl}$. The worst case (corresponding to the biggest values of $c_{Nl}$) corresponds to the case $l=N_{2k}$. In that case
\begin{align*}
c_{NN_{2k}}&=2^{\frac 1p\left((1-sp)N(uv)^k-(N-t)(u-1)\frac{(uv)^k-1}{uv-1}\right)}\\
&=2^{\frac{NN_{2k}}p\left(1-sp-\frac{(N-t)(u-1)}{N(uv-1)}+o(1)\right)}.
\end{align*}
Therefore, we will need the condition 
\begin{equation}
1-sp-\frac{N-t}N\times\frac{u-1}{uv-1}<0. \label{eq:badpacking4}
\end{equation}
Another difference with the previous case is that we are looking at the remainders instead of the partial sums. When evaluating $R_j f(x)$, we can now use $c_l$ for $l\geq j$ instead of $l<j$. 
Hence we have to cut the intervals $[NN_{2k},NN_{2k+2})$ in a different way. We still consider $\eta>0$ and assume first that $j$ belongs to some $[NN_{2k},N(1-\eta)N_{2k+1})$. In that case, $j\in [Nl,N(l+1))$ with $l=\kappa N_{2k}$ and
$\kappa\leq (1-\eta)u$. Moreover, we know that for these values of $l$,
$$M_l=2^{(N-t)(\kappa-1)(uv)^k+(N-t)(u-1)\frac{(uv)^k-1}{uv-1}}.$$
This yields that for all $x\in L$,
\begin{align*}
\frac{\log R_j f(x)}{j\log 2}&\geq \frac{\log c_{N(l+1)}}{j\log 2}+o(1)\\
&\geq \frac{N\kappa (uv)^k(1-sp)-(N-t)(\kappa-1)(uv)^k+(N-t)(u-1)\frac{(uv)^k-1}{uv-1}}{pN\kappa (uv)^k}+o(1)\\
&\geq \frac 1p\left(1-sp-\frac{N-t}N\left(1-\frac{u(v-1)}{\kappa(uv-1)}\right)\right)+o(1).
\end{align*}
The lower bound of the right handside of this inequality is attained for the largest possible value of $\kappa$, namely for $\kappa=(1-\eta)u$ so that
\[\liminf_{\substack{j\to+\infty\\ j\in\bigcup_k [N N_{2k},N(1-\eta)N_{2k+1})}}\frac{\log R_j f(x)}{j\log 2}\geq\frac 1p\left(1-sp-\frac{N-t}N\left(1-\frac{v-1}{(1-\eta)(uv-1)}\right)\right).\]

On the other hand, for $j$ belonging to $[N(1-\eta)N_{2k+1},NN_{2k+2})$, we look at a term later in the series by writing $R_j f(x)\geq c_{NN_{2k+2}}$ so that 
\begin{align*}
\frac{\log R_j f(x)}{j\log 2}&\geq \frac{N(uv)^{k+1}(1-sp)-(N-t)(u-1)\frac{(uv)^{k+1}-1}{uv-1}}{p(1-\eta)u(uv)^k N}+o(1)\\
&\geq \frac 1p\times\frac{v}{1-\eta}\times\left(1-sp-\frac{N-t}N\times\frac{u-1}{uv-1}\right)+o(1).
\end{align*}

Hence, we are done provided we may choose the parameters so that \eqref{eq:badpacking4} and the three following inequalities are satisfied:
\begin{align}
\label{eq:badpacking5}
\frac{N-t}N\times\frac{(u-1)v}{uv-1}&>\alpha\\
\label{eq:badpacking6}
1-sp-\frac{N-t}{N}\left(1-\frac{v-1}{(1-\eta)(uv-1)}\right)&\geq 1-sp-\alpha\\
\label{eq:badpacking7}
\frac{v}{1-\eta}\times\left(1-sp-\frac{N-t}N\times\frac{u-1}{uv-1}\right)&\geq 1-sp-\alpha.
\end{align}

As before, we consider $\veps>0$ very small and set $u=1+\veps$, $v=1+\left(\frac 1\alpha-1\right)\veps$, $N\geq t\geq t_0$ so that 
\begin{equation}
\label{eq:badpacking8}
\alpha(1+\veps^2)\geq\frac{N-t}{N}\times\frac{(u-1)v}{uv-1}>\alpha.
\end{equation}
This ensures that \eqref{eq:badpacking5} is true and also that \eqref{eq:badpacking4} is satisfied provided
$\veps>0$ is small enough: remember that $\beta=(1-sp-\alpha)/p<0$ and that $\frac{N-t}N\times \frac{u-1}{uv-1}$ can be taken arbitrarily close to $\alpha$.  We now set $\eta=\veps^{3/2}$ and we claim that 
\eqref{eq:badpacking6} and \eqref{eq:badpacking7} are also satisfied. Indeed, we write
\begin{align*}
1-\frac{v-1}{(1-\eta)(uv-1)}&=1-\frac{v-1}{uv-1}-\frac{v-1}{uv-1}\veps^{3/2}+o(\veps^{3/2})\\
&=\frac{(u-1)v}{uv-1}-\frac{(u-1)v}{uv-1}\times\frac{v-1}{v(u-1)}\veps^{3/2}+o(\veps^{3/2})
\end{align*}
so that, using also \eqref{eq:badpacking8},
\begin{align*}
\frac{N-t}N\left(1-\frac{v-1}{(1-\eta)(uv-1)}\right)&\leq \alpha-\alpha\left(\frac1\alpha-1\right)\veps^{3/2}+o(\veps^{3/2})\\
&\leq \alpha
\end{align*}
provided $\veps$ is small enough. Moreover,
\begin{align*}
\frac{v}{1-\eta}\left(1-sp-\frac{N-t}{N}\times\frac{u-1}{uv-1}\right)&=\frac{v}{1-\eta}(1-sp)-\frac{N-t}N\times\frac{(u-1)v}{uv-1}\times\frac1{1-\eta}\\
&\geq(1-sp)\left(1+\left(\frac1\alpha-1\right)\veps+o(\veps)\right)-\alpha+o(\veps)\\
&\geq(1-sp-\alpha)+(1-sp)\left(\frac 1\alpha-1\right)\veps+o(\veps)\\
&\geq1-sp-\alpha
\end{align*}
provided again that $\veps>0$ is small enough. Observe the role played here by the assumption $1-sp>0$.

\subsection{The case $d-sp<0$} In that case, which implies that $(P_jf(x))$ converges for all $x\in\RR^d$, we are able to prove that for all $f\in B_{p}^{s,\infty}(\RR^d)$, 
$\dimp\big(\mathcal E^+(\beta,f)\big)\leq d-sp-\beta p$ for all $\beta\in \left[-s,\frac dp-s\right]$.
Let $A>0$ be such that all mother wavelets have support in $[-A,A]^d$. For $j\geq 1$ and $x\in\RR^d$, we denote
\[\Gamma_j(x)=\left\{\lambda\in\Lambda_j;\ \exists i,\ \psi_\lambda^{(i)}(x)\neq 0\right\}.\]
The cardinal number of $\Gamma_j(x)$ is uniformly bounded in $j$ and $x$ (by $(2A+1)^d$). We will need another combinatorial result.
\begin{lemma}\label{lem:packingbest1}
Let $l\geq 1$, $\lambda\in\Lambda_l$ and $(x_u)$ a sequence in $\RR^d$. Then
\[\mathrm{card}\left(\left\{u\in\NN;\ \lambda\in\Gamma_l(x_u)\right\}\right)\leq
\sup_u\left(\mathrm{card}\left(\left\{v\in\NN;\ \|x_u-x_v\|\leq 2A2^{-l}\right\}\right)\right).\]
\end{lemma}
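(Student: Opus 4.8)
The plan is to unwind the definition of $\Gamma_l(x)$ using the support hypothesis on the mother wavelets. Recall that for $\lambda=(l,k)$ we have $\psi_\lambda^{(i)}(x)=\psi^{(i)}(2^lx-k)$ (with the $L^\infty$ normalization used here), and each $\psi^{(i)}$ is supported in $[-A,A]^d$. Hence $\lambda\in\Gamma_l(x)$, i.e. $\psi_\lambda^{(i)}(x)\neq 0$ for some $i$, forces $2^lx-k\in[-A,A]^d$; since we work with the supremum norm on $\RR^d$, this is exactly the condition $\|2^lx-k\|\leq A$. Consequently every $x$ with $\lambda\in\Gamma_l(x)$ lies in the cube $2^{-l}(k+[-A,A]^d)$, whose diameter (in the supremum norm) is $2A2^{-l}$.

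From there the argument is a one-line pigeonhole observation. First I would set $S=\{u\in\NN;\ \lambda\in\Gamma_l(x_u)\}$. If $S=\varnothing$ the asserted inequality is trivial, so I may fix some $u_0\in S$. By the previous paragraph, $x_{u_0}$ and $x_v$ both belong to $2^{-l}(k+[-A,A]^d)$ for every $v\in S$, so $\|x_{u_0}-x_v\|\leq 2A2^{-l}$ for all $v\in S$. This yields the inclusion
\[
S\subset\big\{v\in\NN;\ \|x_{u_0}-x_v\|\leq 2A2^{-l}\big\},
\]
and taking cardinalities, then passing to the supremum over $u$, gives precisely the claimed bound.

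There is essentially no real obstacle in this lemma; the only point requiring a little care is the translation of the phrase "support in $[-A,A]^d$" into the ball condition $\|2^lx-k\|\leq A$, which relies on the paper's conventions (the normalization $\psi_\lambda^{(i)}=\psi^{(i)}(2^l\cdot-k)$ and the choice of the supremum norm on $\RR^d$, under which a dyadic cube of $\Lambda_l$ has diameter $2^{-l}$). Once that is in place, the proof is just the triangle inequality applied inside a single small cube.
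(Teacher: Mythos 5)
Your proof is correct and follows essentially the same argument as the paper: both reduce membership $\lambda\in\Gamma_l(x)$ to the support condition $2^lx-k\in[-A,A]^d$ and then note that any two such points are within $2A2^{-l}$ of each other, so the set of indices $u$ with $\lambda\in\Gamma_l(x_u)$ is contained in a single ball of that radius around one of them.
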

\begin{proof}
Assume that $u$ and $v$ are such that $\lambda\in \Gamma_l(x_u)$ and $\lambda\in\Gamma_l(x_v)$. Then we have simultaneously $2^l x_u-k\in [-A,A]^d$ and $2^l x_v-k\in [-A,A]^d$ so that $\|x_u-x_v\|\leq 2A2^{-l}$.
\end{proof}

The forthcoming lemma is a substitute to Lemma \ref{lem:packing1}.
\begin{lemma}\label{lem:packingbest2}
Let $\veps>0$ and $f\in B_p^{s,\infty}(\RR^d)$. There exists $C=C_\veps$ so that, for all $x\in\RR^d$, for all $j\geq 1$,
\[\left|R_jf(x)\right|\leq C\left(\sum_{l\geq j}2^{\veps pl}\sum_i\sum_{\lambda\in\Gamma_l(x)}|c_\lambda^{(i)}|^p\right)^{1/p}.\]
\end{lemma}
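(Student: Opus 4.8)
The plan is to use the compact support of the wavelets to reduce $Q_lf(x)$ to the finitely many indices $\lambda\in\Gamma_l(x)$, and then to sum over $l\ge j$ by means of a weighted H\"older inequality that converts an $\ell^1$ sum in $l$ into the $\ell^p$ sum appearing on the right-hand side. First I would write $R_jf(x)=\sum_{l\ge j}Q_lf(x)$ with $Q_lf(x)=\sum_i\sum_{\lambda\in\Lambda_l}c_\lambda^{(i)}\psi_\lambda^{(i)}(x)$, and observe that, since each $\psi^{(i)}$ is supported in $[-A,A]^d$, only the cubes $\lambda\in\Gamma_l(x)$ contribute to $Q_lf(x)$, and $\mathrm{card}(\Gamma_l(x))\le (2A+1)^d$ uniformly in $l$ and $x$. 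Using the $L^\infty$-normalization $\|\psi_\lambda^{(i)}\|_\infty\le C$ this gives $|Q_lf(x)|\le C\sum_i\sum_{\lambda\in\Gamma_l(x)}|c_\lambda^{(i)}|$, and since the number of pairs $(i,\lambda)$ on the right is bounded by $(2^d-1)(2A+1)^d$ independently of $l$ and $x$, comparison of the $\ell^1$ and $\ell^p$ norms on this fixed finite set (i.e.\ H\"older's inequality) yields
$$|Q_lf(x)|\le C\Big(\sum_i\sum_{\lambda\in\Gamma_l(x)}|c_\lambda^{(i)}|^p\Big)^{1/p}=:C\,a_l(x).$$

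Next I would sum over $l\ge j$, getting $|R_jf(x)|\le C\sum_{l\ge j}a_l(x)$ (the series converges absolutely: the wavelets are compactly supported and $|c_\lambda^{(i)}|\le C2^{(d/p-s)l}$ with $d-sp<0$ for $f\in B_p^{s,\infty}(\RR^d)$). To recover the stated bound I insert the weight $2^{\veps l}$ before applying H\"older:
$$\sum_{l\ge j}a_l(x)=\sum_{l\ge j}\big(2^{\veps l}a_l(x)\big)\,2^{-\veps l}\le\Big(\sum_{l\ge j}2^{\veps p l}a_l(x)^p\Big)^{1/p}\Big(\sum_{l\ge j}2^{-\veps p^* l}\Big)^{1/p^*},$$
and $\sum_{l\ge 0}2^{-\veps p^* l}=(1-2^{-\veps p^*})^{-1}=:C_\veps^{p^*}<+\infty$. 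When $p=1$ this last step is trivial, with constant $1$, since $2^{\veps l}\ge 1$. Combining with the bound on $|Q_lf(x)|$ gives exactly $|R_jf(x)|\le C_\veps\big(\sum_{l\ge j}2^{\veps pl}\sum_i\sum_{\lambda\in\Gamma_l(x)}|c_\lambda^{(i)}|^p\big)^{1/p}$, as claimed.

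There is no genuine obstacle in this argument; the only two points deserving care are that the reduction to $\Gamma_l(x)$ and the passage from $\ell^1$ to $\ell^p$ must be uniform in $l$ and $x$ (which is precisely what the compact support of the wavelets provides, through the uniform bound $(2A+1)^d$ on $\mathrm{card}(\Gamma_l(x))$), and that the weight $2^{\veps l}$ has to be introduced \emph{before} invoking H\"older, so that the leftover geometric factor $\sum_{l\ge j}2^{-\veps p^*l}$ is finite — this is exactly why the factor $2^{\veps pl}$ appears in the statement, and why any $\veps>0$ works.
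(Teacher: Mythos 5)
Your proof is correct and follows essentially the same route as the paper: reduce to the uniformly finite index set $\Gamma_l(x)$ via compact support, insert the weight $2^{\veps l}$, and apply H\"older so that the conjugate factor is a convergent geometric series. The only (cosmetic) difference is that you apply H\"older in two stages (first within each level $l$, then across levels), whereas the paper does it in a single application over all triples $(l,i,\lambda)$; the constants and the final bound are identical.
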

\begin{proof}
We use Hölder's inequality and the fact that the cardinal number of $\Gamma_l(x)$ is uniformly bounded in $x$ and $l$ to get successively
\begin{align*}
|R_jf(x)|&\leq C\sum_{l\geq j}\sum_i\sum_{\lambda\in\Gamma_l(x)}|c_{\lambda}^{(i)}|\\
&\leq C\sum_{l\geq j}\sum_i\sum_{\lambda\in\Gamma_l(x)}2^{\veps l}|c_\lambda^{(i)}|2^{-\veps l}\\
&\leq C\left(\sum_{l\geq j}2^{\veps pl}\sum_i\sum_{\lambda\in\Gamma_l(x)}|c_\lambda^{(i)}|^p\right)^{1/p}\left(\sum_{l\geq j}2^{-\veps p^* l}\sum_i \mathrm{card}(\Gamma_l(x))\right)^{1/p^*}\\
&\leq C\left(\sum_{l\geq j}2^{\veps pl}\sum_i\sum_{\lambda\in\Gamma_l(x)}|c_\lambda^{(i)}|^p\right)^{1/p}.
\end{align*}
\end{proof}
The proof that, for $\beta\in\left[-s,\frac dp-s\right]\backslash\{0\}$ and $f\in B_p^{s,\infty}(\RR^d)$, $\dimp\left(\mathcal E^+(\beta,f)\right)\leq d-sp-\beta p$ 
follows the same line as the proof of Proposition \ref{prop:packing1} with some technical changes. As before, letting
\[\mathcal G_J^+(\gamma,f)=\left\{x\in\RR^d;\ \forall j\geq J,\ |R_jf(x)|\geq 2^{\gamma j}\right\}\]
for $\gamma\in \left(\beta,\frac dp-s\right)$,
one only need to prove that, for all $J\in\NN$, $\dboxsup\left(\mathcal G_J^+(\gamma,f)\right)\leq d-sp-\gamma p$.
Let $j\geq J$ and let $\Theta_j$ be the dyadic cubes of the $j$-th generation intersecting $\mathcal G_J^+(\gamma,f)$. 
Let $N_j$ be the cardinal number of $\Theta_j=\left\{\lambda_1,\dots,\lambda_{N_j}\right\}$.
Pick $x_u\in \lambda_u\cap\mathcal G_J^+(\gamma,f)$ for $u=1,\dots,N_j$. Then by Lemma \ref{lem:packingbest2}, 
$$2^{\gamma pj}N_j\leq C\sum_{l\geq j}2^{\veps pl}\sum_i \sum_{u=1}^{N_j}\sum_{\lambda\in \Gamma_l(x_u)}|c_\lambda^{(i)}|^p.$$
Now, the $x_u$ belonging to different dyadic cubes of the $j$-th generation, for all $l\geq j$, 
$$\sup_u\left(\mathrm{card}\left(\left\{v;\ \|x_u-x_v\|\leq 2A2^{-l}\right\}\right)\right)\leq C_{A,d}.$$
Therefore, an application of Lemma \ref{lem:packingbest1} yields
\begin{align*}
2^{\gamma pj}N_j&\leq C\sum_{l\geq j}2^{\veps pl}\sum_i \sum_{\lambda\in\Lambda_l}|c_\lambda^{(i)}|^p\\
&\leq C\sum_{l\geq j}2^{\veps pl}\left(\sum_{i}\sum_{\lambda\in\Lambda_l} |c_\lambda^{(i)}|^p2^{(sp-d)l}\right)2^{-(sp-d)l}\\
&\leq C\sum_{l\geq j}2^{(d-sp+\veps p)l}\|f\|^p_{B_p^{s,\infty}}.
\end{align*}
Since $d-sp<0$, we may choose $\veps>0$ sufficiently small so that $d-sp+\veps p<0$. We deduce that
$$2^{\gamma pj}N_j\leq C2^{(d-sp+\veps p)j}\sum_{l\geq j}2^{(d-sp+\veps p)(l-j)}\|f\|^p_{B_p^{s,\infty}}$$
which in turn implies that
$$N_j\leq C 2^{(d-sp-\gamma p+\veps p)j} \|f\|^p_{B_p^{s,\infty}}$$
yielding $\dboxsup\left(\mathcal G_J^+(\gamma,f)\right)\leq d-sp-\gamma p+\veps p$. Letting $\veps$ to 0 implies the result.

\begin{question}
 Does this remain true if we do not assume that the wavelets have compact support?
\end{question}

\section{Final remarks}\label{sec:remarks}

\subsection{Residuality and prevalence}
In Theorem \ref{thm:main1}, we cannot expect to get that the set of functions satisfying (iii) is residual.
In fact, we are very far from this, as the following proposition indicates.
\begin{proposition}\label{prop:propositionlimit}
 Let $X=\besov$  or $X=\sobolev$ with $p,q\in[1,+\infty)$. Then for all functions $f$ in a residual subset of $X$, for all $x\in\mathbb R^d$, 
 \begin{equation}
  \liminf_{j\to+\infty}\frac{\log^+|P_j f(x)|}{j\log 2}=0. \label{eq:propositionlimit}
 \end{equation}
\end{proposition}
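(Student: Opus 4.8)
The plan is to produce the residual set via the Baire category theorem, by writing it as a countable intersection of open dense sets built from the uniform bounds $\|P_jf\|_\infty$. Since $\log^+\geq 0$, the $\liminf$ in \eqref{eq:propositionlimit} is automatically $\geq 0$, so everything reduces to proving, for $f$ in a residual set, that for each integer $n\geq 1$ there are infinitely many $j$ with $\|P_jf\|_\infty<2^{j/n}$; this bound is uniform in $x$ and immediately gives $\liminf_j\frac{\log^+|P_jf(x)|}{j\log 2}\leq 1/n$ for every $x$, whence the claim on letting $n\to+\infty$.

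Accordingly, for integers $n,m\geq 1$ I would set
\[A_{n,m}=\bigcup_{j\geq m}\bigl\{f\in X:\ \|P_jf\|_\infty<2^{j/n}\bigr\}.\]
First I would check that $A_{n,m}$ is open: for fixed $j$, decomposing $P_jf$ into its scaling-function part and $Q_0f+\cdots+Q_{j-1}f$ and applying H\"older's inequality together with Lemma \ref{lem:wavelet1} (and its analogue for $\varphi$) gives $\|P_jf\|_\infty\leq C_j\|f\|_X$ with $C_j<\infty$; hence $f\mapsto\|P_jf\|_\infty$ is Lipschitz on $X$, each set $\{f:\|P_jf\|_\infty<2^{j/n}\}$ is open, and so is their union over $j\geq m$. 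Next I would check that $A_{n,m}$ is dense: exactly as in the proof of Lemma \ref{lem:residualfixedlevel} (this is where $p,q<\infty$ is used) there is a dense sequence $(g_l)$ in $X$ with $g_l\in V_l$, and each $g_l$ is bounded — writing $g_l=\sum_k c_k\varphi(2^l\cdot-k)$ with $(c_k)\in\ell^p\subset\ell^\infty$ and bounding $\sum_k|\varphi(2^lx-k)|$ uniformly in $x$ by Lemma \ref{lem:wavelet1}. Since $P_jg_l=g_l$ for all $j\geq l$, we get $\|P_jg_l\|_\infty=\|g_l\|_\infty<\infty$, so $g_l\in A_{n,m}$ whenever $j\geq\max(m,l)$ is large enough that $2^{j/n}>\|g_l\|_\infty$; thus $A_{n,m}\supset\{g_l:l\geq 1\}$ is dense.

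Then $\mathcal R:=\bigcap_{n,m\geq 1}A_{n,m}$ is residual in $X$, and for $f\in\mathcal R$ and each $n$ the set $J_n=\{j:\|P_jf\|_\infty<2^{j/n}\}$ is infinite; for $j\in J_n$ and any $x\in\RR^d$ one has $\log^+|P_jf(x)|\leq(j/n)\log 2$, so
\[\liminf_{j\to+\infty}\frac{\log^+|P_jf(x)|}{j\log 2}\leq\liminf_{\substack{j\to+\infty\\ j\in J_n}}\frac{\log^+|P_jf(x)|}{j\log 2}\leq\frac1n,\]
and letting $n\to+\infty$ finishes the proof for every $x$. I do not expect a genuine obstacle here: the only slightly delicate points are the boundedness of $P_j\colon X\to L^\infty$ for fixed $j$ — needed for the openness — and the density in $X$ of scale-bounded (hence $L^\infty$-bounded) functions — needed for the density — and both are routine consequences of Lemma \ref{lem:wavelet1} and the wavelet description of $X$.
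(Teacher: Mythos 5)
Your proof is correct, and it reaches the conclusion by a Baire category argument just as the paper does, but with a different choice of open dense sets. The paper works with
\[
\mathcal U(K,\veps,J)=\bigl\{f\in X;\ \forall x\in K,\ \exists j\geq J,\ |P_jf(x)|<2^{\veps j}\bigr\}
\]
for an exhaustion of $\RR^d$ by compacts $K$: here the scale $j$ may depend on $x$, density again comes from functions with finite wavelet expansion, but openness requires the joint continuity of $(g,y)\mapsto P_jg(y)$ together with a finite subcover of $K$. You instead impose a \emph{uniform} bound $\|P_jf\|_\infty<2^{j/n}$ at a single scale $j\geq m$, so that openness reduces to the boundedness of $P_j\colon X\to L^\infty$ for fixed $j$ (which indeed follows from H\"older's inequality, Lemma \ref{lem:wavelet1} and the definition of the Besov norm, as you indicate), and no compact exhaustion or covering argument is needed. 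The price of the stronger condition is paid only in the density step, where it costs nothing: a dense sequence $(g_l)$ with $g_l\in V_l$ satisfies $P_jg_l=g_l$ for $j\geq l$ with $\|g_l\|_\infty<\infty$, exactly as in the paper's density argument, and this is where $p,q<\infty$ enters in both proofs. Your version actually yields a slightly stronger generic statement, namely $\liminf_j \log^+\|P_jf\|_\infty/(j\log 2)=0$, i.e.\ the subsequence of scales along which $P_jf$ is small can be chosen independently of $x$, whereas the paper's sets only give the pointwise assertion \eqref{eq:propositionlimit}; the paper's formulation, on the other hand, would survive in situations where one only has local (rather than global) sup-norm control of $P_jf$.
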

\begin{proof}
 For $K$ a compact subset of $\RR^d$, $\veps>0$ and $J\in\NN$, we denote by 
 $$\mathcal U(K,\veps,J)=\left\{f\in X;\ \forall x\in K,\ \exists j\geq J,\ |P_jf(x)|<2^{\veps j}\right\}.$$
 Then all $\mathcal U(K,\veps,J)$ are dense (because they contain all functions with a finite wavelet series) and open. 
Indeed, pick $f\in\mathcal U(K,\veps,J)$.  For any $x\in K$, there exists $j\geq J$ such that $|P_j f(x)|<2^{\veps j}$.
By continuity of $(g,y)\mapsto P_j g(y)$, there exists an open neighbourhood $\mathcal O_x$ of $x$ in $K$ and a neighbourhood $\mathcal V_x$ of $f$ in $X$ such that
$$\forall g\in\mathcal V_x,\ \forall y\in \mathcal O_x,\ |P_j g(y)|<2^{\veps j}.$$
By compactness, $K$ is covered by a finite number of open sets $\mathcal O_x$, says $\mathcal O_{x_1},\dots,\mathcal O_{x_p}$.
Then $\mathcal V_{x_1}\cap\dots\cap \mathcal V_{x_p}$ is a neighbourhood of $f$ contained in $\mathcal U(K,\veps,J)$.
We conclude by observing that, if $(K_m)$ is a sequence of compacts subsets of $\RR^d$ such that $\bigcup_m K_m=\RR^d$, 
any function $f$ in the residual set $\bigcap_{m,k,J}\mathcal U(K_m,2^{-k},J)$ satisfies \eqref{eq:propositionlimit} for all $x\in\mathbb R^d$.
\end{proof}

On the contrary, we do not know whether we can get the existence of a prevalent set of strongly multifractal functions. This can be done
if we modify the definition of our sets by taking absolute values. More precisely, let us define
\begin{align*}
 P_j^*f(x)&=\sum_{l<j}|Q_l f(x)|\\
 R_j^*f(x)&=\sum_{l\geq j}|Q_lf(x)|
\end{align*}
and, for $\beta>0$, 
$$E_*(\beta,f)=\left\{x\in\RR^d;\ \lim_j \frac{\log P_j^*f(x)}{j\log 2}=\beta\right\},$$
for $\beta<0$, 
$$E_*(\beta,f)=\left\{x\in\RR^d;\ \lim_j \frac{\log R_j^*f(x)}{j\log 2}=\beta\right\}.$$

\begin{theorem}\label{thm:prevalenceabsolutevalue}
Let $s\geq 0$, $p,q\in [1,+\infty)$ and $X=\besov$ or $X=\sobolev$. Assume that the wavelets have compact support.
 \begin{itemize}
  \item[(i)] For all $f\in X$, for all $\beta\in \left[-s,\frac dp-s\right]\backslash\{0\}$,
  \begin{eqnarray*}
      \dimp\big( E_*(\beta,f)\big)&\leq&d-sp-\beta p.
  \end{eqnarray*}
\item[(ii)] For all  $f$ in a prevalent subset of $X$, for all $\beta\in\left[-s,\frac dp-s\right]\backslash\{0\}$,
$$\dimh\big(E_*(\beta,f)\big)=\dimp\big(E_*(\beta,f)\big)=d-sp-\beta p.$$
 \end{itemize}
\end{theorem}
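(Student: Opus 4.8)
The proof mirrors that of Proposition \ref{prop:packing1} once we establish the analogue of Lemma \ref{lem:packing0}, which is in fact easier here because $j\mapsto P_j^*f(x)$ is non-decreasing, $j\mapsto R_j^*f(x)$ is non-increasing, and both are sums of the non-negative quantities $|Q_lf(x)|$. Precisely, for $\veps\in(0,1)$ and $f\in B_p^{s,\infty}(\RR^d)$ one has $E_*(\beta,f)\subset\{x;\ \liminf_j\sup_{l\in[(1-\veps)j,(1+\veps)j]}\log|Q_lf(x)|/(j\log 2)\ge\beta\}$: for $\beta>0$, $\lim_j\log P_j^*f(x)/(j\log2)=\beta$ forces $\sum_{(1-\veps)j\le l<j}|Q_lf(x)|=P_j^*f(x)-P_{(1-\veps)j}^*f(x)\ge\frac12\,2^{(\beta-\delta)j}$ for $\delta$ small and $j$ large, so some $l\in[(1-\veps)j,j]$ has $|Q_lf(x)|\ge 2^{(\beta-2\delta)j}$; for $\beta<0$ the same computation applies with $R_j^*$. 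With this inclusion, Lemma \ref{lem:packing1} and the box-counting estimate in the proof of Proposition \ref{prop:packing1} carry over unchanged and yield $\dimp(E_*(\beta,f))\le d-sp-\beta p$ (the endpoints $\beta=-s$ and $\beta=\frac dp-s$ being trivial). Since $X$ embeds in $B_p^{s,\infty}(\RR^d)$, this proves (i).

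\textbf{An auxiliary bound.} As in the proofs of Theorems \ref{thm:main1}--\ref{thm:main2} we will need to control the $\limsup$-sets $E_*^-(\gamma,f):=\{x;\ \limsup_j\log P_j^*f(x)/(j\log2)\ge\gamma\}$ (and their $R_j^*$-version for $\gamma<0$): for $f\in B_p^{s,\infty}(\RR^d)$, $\dimh\big(E_*^-(\gamma,f)\big)\le d-sp-\gamma p$. This is a one-line variant of Proposition \ref{prop:dimh}: with the set $E_{\gamma',\veps}$ of that proof, the estimates established there show that $x\notin E_{\gamma',\veps}$ implies $|Q_lf(x)|\le C\,2^{\gamma' l}$ for all large $l$, hence $P_j^*f(x)\le C\,2^{\gamma' j}$ and $x\notin E_*^-(\gamma,f)$ whenever $\gamma>\gamma'$; covering $E_{\gamma',\veps}$ and letting $\gamma'\to\gamma$, $\veps\to0$ finishes it.

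\textbf{Part (ii).} By part (i) and $\dimh\le\dimp$ it suffices to exhibit a prevalent set of $f$ with $\dimh\big(E_*(\beta,f)\big)\ge d-sp-\beta p$ for all admissible $\beta$. Assume first $s>0$, fix $d'\in(d-sp,d)$, let $K,N,t$ be given by Proposition \ref{prop:bigcantorset}, and (as in Section \ref{sec:prevalence}, via \cite{BAYMFF}) fix a decreasing family $(F_\beta)_{0<\beta<\frac dp-s}$ of compact subsets of $K$ with $\dimh(F_\beta)=\dimp(F_\beta)=d-sp-\beta p$ and $\mathcal H^{d-sp-\beta p}(F_\beta)>0$. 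The key step is the starred analogue of Lemma \ref{lem:prevalencesaturatingdiv}: for every $F\subset K$ with $\dimp(F)=\alpha$ there is a prevalent $Y_F\subset X$ with $\liminf_j\log P_j^*f(x)/(j\log2)\ge(d-sp-\alpha)/p$ for all $f\in Y_F$ and all $x\in F$. One proves this by repeating the proof of Lemma \ref{lem:prevalencesaturatingdiv} with $P_j$ replaced by $P_j^*$: the identity $P_{m+1}^*g(x)-P_m^*g(x)=|Q_mg(x)|$ plays the role of $P_{m+1}g-P_mg=Q_mg$, Theorem \ref{thm:saturatingdiv2} still furnishes the auxiliary functions $f_0,\dots,f_{J-1}$ (the positivity of $P_jf_k$ on $K$ is no longer needed, only the prescribed level structure and the lower bound on $|Q_{jJN+kN}f_k|$ over $G$), and, as in Remark \ref{rem:prevalenceprecised}, the sets $U_j$ obey $\lambda_{J-1}(U_j)\le 2^{-\delta j}$, so Borel--Cantelli yields that for almost every translation vector, \emph{for all large $j$} some $m\in\{0,\dots,JN-1\}$ satisfies $P_{jJN+m}^*g(x)\ge 2^{\beta jJN}$, where $g$ denotes the perturbed function; monotonicity of $j\mapsto P_j^*g(x)$ turns this into $P_{(j+1)JN-1}^*g(x)\ge 2^{\beta jJN}$ and, interpolating between consecutive multiples of $JN$, into $\liminf_j\log P_j^*g(x)/(j\log2)\ge\beta$ on $F$. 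Taking a dense $(\beta_k)$ in $(0,\frac dp-s)$ and $Y:=\bigcap_k Y_{F_{\beta_k}}$, for $f\in Y$ and any $\beta$ one gets $\liminf_j\log P_j^*f(x)/(j\log2)\ge\beta$ on $F_\beta\subset F_{\beta_{\phi(k)}}$ (with $\beta_{\phi(k)}\uparrow\beta$); decomposing $F_\beta=(F_\beta\cap E_*(\beta,f))\cup\bigcup_{\gamma>\beta,\,\gamma\in\QQ}(F_\beta\cap E_*^-(\gamma,f))$ and invoking the auxiliary bound ($\mathcal H^{d-sp-\beta p}(E_*^-(\gamma,f))=0$ for $\gamma>\beta$) together with $\mathcal H^{d-sp-\beta p}(F_\beta)>0$ gives $\dimh(F_\beta\cap E_*(\beta,f))=d-sp-\beta p$, hence by part (i) $\dimh(E_*(\beta,f))=\dimp(E_*(\beta,f))=d-sp-\beta p$. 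The convergence range $\beta\in(-s,0)$ is handled identically, replacing $P_j^*$ by $R_j^*$ and the divergence saturating functions of Section \ref{sec:existence} by their convergence counterparts (Theorem \ref{thm:saturatingconv1}, Lemma \ref{lem:prevalencesaturatingconv}), now with $d'$ close to $d$ and a final intersection over $d'_n\uparrow d$; the case $s=0$ (only $\beta>0$, exhausting $d'\uparrow d$ as in the corresponding subsection of Section \ref{sec:existence}) and the endpoint $\beta=-s>0$ (replace $K$ by the gauge-function Cantor set of the last subsection of Section \ref{sec:existence}, for which $\mathcal H^\phi(K)>0$ with $\phi(t)=o(t^{d'})$ as $t\to0$ for every $d'<d$, so that $\mathcal H^\phi(E_*^-(\gamma,f))=0$ for all $\gamma>-s$) need only cosmetic changes --- and are in fact easier than their counterparts for $E(\cdot,f)$, since the absolute values remove all cancellations. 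A final countable intersection of the prevalent sets so obtained settles (ii) for all $\beta\in[-s,\frac dp-s]\setminus\{0\}$ at once.

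\textbf{Main obstacle.} The one genuinely new point --- and the reason prevalence is attainable here while it remains open for $E(\beta,f)$ in Theorem \ref{thm:main1} --- is the upgrade from the Borel--Cantelli output ``for all large $j$, some level within a window of width $JN$ works'' to a true $\liminf$ lower bound. This relies precisely on the monotonicity of $j\mapsto P_j^*f(x)$ and $j\mapsto R_j^*f(x)$, which has no analogue for the oscillating $j\mapsto P_jf(x)$; everything else is a faithful transcription of Sections \ref{sec:dimension}--\ref{sec:existence} with $\sum_l|Q_lf(x)|$ in place of $\sum_l Q_lf(x)$.
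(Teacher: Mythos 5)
Your proposal is correct and follows essentially the same route as the paper: part (i) is the mutatis mutandis transcription of Section \ref{sec:dimension}, and part (ii) rests on the Borel--Cantelli strengthening of Lemma \ref{lem:prevalencesaturatingdiv} (Remark \ref{rem:prevalenceprecised}) combined with the monotonicity of $j\mapsto P_j^*f(x)$ to upgrade the windowed estimate to a genuine $\liminf$, which is exactly the content of the paper's Lemma \ref{lem:prevalenceabsolutevalue}. The ``main obstacle'' you single out is precisely the new ingredient the paper isolates, and the remaining steps (the auxiliary Hausdorff bound for $E_*^-(\gamma,f)$ and the decomposition of $F_\beta$) match the paper's concluding argument.
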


\begin{proof}
 The proof of Section \ref{sec:dimension} works mutatis mutandis for the sets $E_*(\beta,f)$, thus there is nothing to do to prove (i). To prove (ii), we need the following lemma.
 \begin{lemma}\label{lem:prevalenceabsolutevalue}
  Let $x\in\RR^d$, $f\in B_p^{s,1}(\RR^d)$, $M>0$. Assume that there exists $j_0\in\NN$ such that, for all $j\geq j_0$, there exists $m\in\{0,\dots,M-1\}$
  such that $P_{jM+m}^*f(x)\geq 2^{\beta jM}$. Then
  \[\liminf_j \frac{\log P_j^*f(x)}{j\log 2}\geq \beta .\]
 \end{lemma}
 We postpone the proof of this lemma to finish that of Theorem \ref{thm:prevalenceabsolutevalue}. We need an analogue of Lemma \ref{lem:prevalencesaturatingdiv} where we replace
 $P_j$ par $P_j^*$ and $ \limsup_j$ by $\liminf_j$. We keep the notations of this lemma . In particular its proof
 (see more particularly Remark \ref{rem:prevalenceprecised}) shows that, for all $G\subset K$ with $\dboxsup(G)<\alpha$, for all $\beta<(d-sp-\alpha)/p$, 
 there exists $J\geq 1$ such that, for all $f$ in a prevalent subset of $X$, for all $x\in G$, 
 $$\liminf_{j\to+\infty}\sup_{k=0,\dots,jN-1}\frac{\log P_{jJN+k}^* f(x)}{jJN\log 2}\geq\beta.$$
 Lemma \ref{lem:prevalenceabsolutevalue} tells us that in fact
 $$\liminf_{j\to+\infty}\frac{\log P_j^* f(x)}{j\log 2}\geq \beta.$$
 We now conclude with a proof that imitates strongly that done before.
\end{proof}
\begin{proof}[Proof of Lemma \ref{lem:prevalenceabsolutevalue}]
 Let $l\geq (j_0+1)M$. There exists $j\geq j_0$ such that $l\in [(j+1)M,(j+2)M)$. For this value of $j$, we know the existence
 of $m\in\{0,\dots,M-1\}$ satisfying
 \[P_{jM+m}^* f(x)\geq 2^{\beta jM}.\]
 Since the sequence $(P_n^*(x))$ is nondecreasing, 
 $$P_l^*f(x)\geq 2^{\beta jM}\geq \frac1{2^{2\beta M}}\cdot 2^{\beta l}.$$
\end{proof}

These considerations suggest that prevalence
is a more suitable notion of genericity than residuality in the context of multifractal analysis.

\subsection{Extreme values for p and/or q}
When $p\in(0,1)$ or $q\in (0,1)$, the Besov spaces $B_p^{s,q}(\RR^d)$ are no more Banach spaces but nonetheless are separable complete metric vector spaces. Our method of proof carries on without difficulties to this context. The only important change is that we can no longer apply H\"older's inequality during the proof of Lemma \ref{lem:packing1} when $p<1$. But the proof is even simpler. We just write
\[\sum_i\sum_{\lambda\in\Lambda_{l,\lambda_j(x),\veps}}|c_\lambda^{(i)}|\leq \left(\sum_i\sum_{\lambda\in\Lambda_{l,\lambda_j(x),\veps}}|c_\lambda^{(i)}|^p\right)^{1/p}.\]

When $p=+\infty$ or $q=+\infty$, the Besov spaces are no longer separable. Part (i) and (iii) of Theorems \ref{thm:main1} and \ref{thm:main2}, which do not use separability, remain valid.
However, we do not know if this the case for part (ii). 

\subsection{The sets $E(0,f)$}
Our work does not consider the case $\beta=0$. It seems natural to define the corresponding sets as, for instance
\begin{align*}
 E(0,f)=\Bigg\{x\in\RR^d;\ &\big(P_jf(x)\big)\textrm{ diverges and }\lim_j \frac{\log |P_jf(x)|}{j\log 2}=0\\
 &\textrm{or }\big(P_jf(x)\big)\textrm{ converges and }\lim_j \frac{\log |R_jf(x)|}{j\log 2}=0\Bigg\}.
\end{align*}
The construction of the saturating function may also be done for this set and an easy modification of the proof of Proposition \ref{prop:dimh} shows
that $\dimh\big(\mathcal E^-(0,f)\big)\leq d-sp$. However, using ideas from Section \ref{sec:packing}, this breaks down for the packing dimension.

\begin{proposition}
 Assume that the wavelets have compact support and $1-sp>0$. Then, for all $\veps\in(0,1)$, there exists $f\in B_p^{s,1}(\RR)$ such that $\dimp\big(E(0,f)\big)\geq1-\veps$.
\end{proposition}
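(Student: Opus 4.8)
The plan is to reuse the homogeneous Cantor set $L$ built in Section~\ref{sec:packing} (the one with $\dimh L<\dimp L$) together with a saturating function of the same type, but to calibrate simultaneously the geometry of $L$ and the size of the wavelet coefficients so that for every $x\in L$ the series $(P_jf(x))$ converges and its remainder $R_jf(x)$ decays only polynomially in $j$ — forcing $\lim_j\frac{\log|R_jf(x)|}{j\log2}=0$ — while $\dimp L$ stays within $\veps$ of $1$.

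Concretely, I would fix integers $u,v\ge 2$ and $N\ge t\ge t_0$ and run the construction of Section~\ref{sec:packing}: $L=\bigcap_m L_m$, with $L_m$ a union of $M_m$ dyadic intervals of width $2^{-(t+Nm)}$, $M$ multiplied by $2^{N-t}$ at each step on the branching ranges $[N_{2k},N_{2k+1})$, $N_{2k}=(uv)^k$, and constant on the flat ranges $[N_{2k+1},N_{2k+2})$; by \cite{Baek06}, $\dimp L=\frac{N-t}{N}\cdot\frac{(u-1)v}{uv-1}$ and $\dimh L=\dimp L/v$. I would choose $v\ge 1/(1-sp)$, so that $\dimh L<1/v\le1-sp$, and then $u$ large and $\frac{N-t}{N}$ close to $1$, so that $\dimp L\ge1-\veps$. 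Keeping the notation $\mu(\lambda)$ (a dyadic cube of generation $Nm$ attached to each $\lambda\in\Gamma_m$) and $\psi=\psi^{(1)}$ rescaled with $\psi\ge1$ on a dyadic interval and $\mathrm{supp}\,\psi\subset[0,1]$, I set, for a fixed constant $C_1>1$,
\[
c_{Nm}=\min\big(C_1,\,E_m\big),\quad E_m:=2^{(1/p-s)Nm}M_m^{-1/p},\qquad
f=\sum_{m\ge 1}m^{-2}c_{Nm}\sum_{\lambda\in\Gamma_m}\psi_{\mu(\lambda)} .
\]
Since every $\mu(\lambda)$ has generation $Nm\ge N\ge1$, the scaling coefficients of $f$ vanish, so $\|f\|_{B_p^{s,1}(\RR)}=\sum_m\veps_{Nm}$ with $\veps_{Nm}=2^{(s-1/p)Nm}M_m^{1/p}\,m^{-2}c_{Nm}=m^{-2}c_{Nm}/E_m=m^{-2}\min(1,C_1/E_m)\le m^{-2}$, hence $f\in B_p^{s,1}(\RR)$.

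Then I would use the positivity property of the construction in Section~\ref{sec:packing} (cf.\ Proposition~\ref{prop:bigcantorset}): for $x\in L\subset L_m$ only the term indexed by the cube $\lambda\in\Gamma_m$ containing $x$ survives and $\psi_{\mu(\lambda)}(x)\ge1$, so $m^{-2}c_{Nm}\le m^{-2}f_m(x)\le\|\psi\|_\infty m^{-2}c_{Nm}$ where $f_m=c_{Nm}\sum_{\lambda\in\Gamma_m}\psi_{\mu(\lambda)}$. As the details of $f$ sit only at generations $Nm$, both $P_jf(x)=\sum_{Nm<j}m^{-2}f_m(x)$ and $R_jf(x)=\sum_{Nm\ge j}m^{-2}f_m(x)$ are sums of nonnegative terms comparable (with constants depending only on $\|\psi\|_\infty$) to $\sum m^{-2}c_{Nm}$. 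Since $c_{Nm}\le C_1$, the first sum is bounded, so $(P_jf(x))$ converges, and $0<R_jf(x)\le C/j$. For the reverse estimate I would use the ``peaks'' $m=N_{2k}=(uv)^k$: there $\frac{\log_2 M_{N_{2k}}}{NN_{2k}}\to\dimh L<1-sp$, so $E_{N N_{2k}}\to+\infty$ and $c_{N N_{2k}}=C_1$ for all large $k$; as consecutive peaks are in ratio $uv$, every large $j$ admits $m^*\in[\,j/N,\,uv\,j/N\,]$ with $c_{Nm^*}=C_1$, whence $R_jf(x)\ge(m^*)^{-2}C_1\ge c\,j^{-2}$. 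Combining, $-2\log_2 j+O(1)\le\log_2 R_jf(x)\le-\log_2 j+O(1)$ uniformly on $L$, so $\lim_j\frac{\log|R_jf(x)|}{j\log2}=0$ for every $x\in L$; thus $L\subset E(0,f)$ and $\dimp E(0,f)\ge\dimp L\ge1-\veps$.

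The one genuinely delicate point — and the reason one cannot simply take $c_{Nm}\equiv1$ — is the clash between membership in $B_p^{s,1}$ and the non-exponential decay of the remainder: since $\dimp L$ is forced close to $1>1-sp$, the Besov constraint makes $E_m$, hence $c_{Nm}$, exponentially small on the ranges where $M_m$ is largest, and the whole argument rests on the homogeneous-Cantor ``dips'' — the levels $N_{2k}$, spaced by the bounded ratio $uv$ — where $M_m$ is small enough ($\dimh L<1-sp$) that $c_{Nm}$ can be kept equal to $C_1$ without spoiling the $\ell^1$ summability of $(\veps_{Nm})$. Arranging $\dimp L\ge1-\veps$, $\dimh L<1-sp$, and the right growth profile of $M_m$ at once, with the cutoff $c_{Nm}=\min(C_1,E_m)$, is the heart of the matter; once $\veps_{Nm}\le m^{-2}$ is in hand the rest is routine bookkeeping. (When $s=0$ the requirement $\dimh L<1-sp=1$ is automatic, so any such $L$ works and one may even take $c_{Nm}\equiv1$.)
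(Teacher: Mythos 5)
Your proof is correct, and it reaches the conclusion through the \emph{other} branch of the set $E(0,f)$ than the one the paper exploits. The paper uses the same homogeneous Cantor set $L$ from Section~\ref{sec:packing} (with $v=1/(1-sp)$ and $(1-\veps)(1-sp)<\frac{u-1}{uv-1}\cdot\frac{N-t}{N}<1-sp$), but it places wavelet coefficients \emph{only at the peak levels} $N_{2k}$, with subexponentially growing amplitudes $c_{NN_{2k}}=2^{NN_{2k}/(k+1)}$; the partial sums then tend to $+\infty$ at a subexponential rate, so $L$ lands in the divergence branch of $E(0,f)$, and the upper bound $|P_jf(x)|\leq Ck\,2^{N(uv)^k/k}$ for $j>NN_{2k}$ gives $\limsup_j\log|P_jf(x)|/(j\log 2)\leq 0$. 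You instead keep all coefficients bounded via the cutoff $c_{Nm}=\min(C_1,E_m)$ at \emph{every} level $Nm$, which forces $(P_jf(x))$ to converge on $L$, and you use the peaks $N_{2k}$ --- where $\log_2 M_m/(Nm)\to\dimh(L)<1-sp$, so the cutoff is inactive --- to squeeze the remainder between two polynomial bounds, landing $L$ in the convergence branch. Both arguments rest on exactly the same mechanism: a set with $\dimh(L)<1-sp$ but $\dimp(L)\geq 1-\veps$, positivity of the $\psi_{\mu(\lambda)}$ on $L$, and the observation that the Besov constraint only bites at the levels where $M_m$ is large. The paper's remark following the proposition sketches a convergence-branch variant ($c_{NN_{2k}}=2^{-NN_{2k}/(k+1)}$, coefficients at peaks only), so your construction can be viewed as a more uniform implementation of that remark; what it buys is a cleaner two-sided polynomial control $cj^{-2}\leq R_jf(x)\leq Cj^{-1}$, at the price of having to check the $\ell^1$ summability of $(\veps_{Nm})$ through the cutoff (which your computation $\veps_{Nm}=m^{-2}\min(1,C_1/E_m)\leq m^{-2}$ handles correctly). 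Two cosmetic points: the index in $E_{NN_{2k}}$ should be $E_{N_{2k}}$ to match your definition of $E_m$, and when $s=0$ one should still take $v>1$ strictly (any value works there, as you note).
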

 \begin{proof}
  Let $L$ be the compact set built in Section \ref{sec:packing} with $v=1/(1-sp)>1$ and $u,N,T$ such that 
  $$(1-\veps)(1-sp)<\frac{u-1}{uv-1}\times\frac{N-t}N<1-sp$$
  (this is always possible by taking $u,N,t$ large enough). Hence,
  $$\dimp(L)={v}\times \frac{u-1}{uv-1}\times\frac{N-t}N>1-\veps.$$
  Let, for $k\geq 0$, $c_{NN_{2k}}=2^{NN_{2k}/(k+1)}$ and $f_k=c_{NN_{2k}}\sum_{\lambda\in \Gamma_{N_{2k}}}\psi_{\mu(\lambda)}$. Then
  \begin{align*}
   \|f_k\|&\leq2^{NN_{2k}/(k+1)}\card(\Gamma_{N_{2k}})^{1/p}2^{-(1-sp)NN_{2k}/p}\\
   &\leq 2^{NN_{2k}/(k+1)}2^{\frac{NN_{2k}}p\left(-(1-sp)+\frac{(N-t)(u-1)}{N(uv-1)}+o(1)\right)}\\
   &\leq 2^{NN_{2k}\left(-(1-sp)+\frac{(N-t)(u-1)}{N(uv-1)}+o(1)\right)}.
  \end{align*}
 This ensures that $\|f_k\|\leq C$ so that $f=\sum_{k\geq 0}f_k/(k+1)^2$ defines an element of $B_p^{s,1}(\RR)$. For all $x\in L$ and all $j\in (NN_{2k},NN_{2k+1}]$,
 $P_j f(x)\geq P_{NN_{2k}}f(x)\geq c_{NN_{2k}}/(k+1)^2$. Hence the sequence $(P_jf(x))$ tends to infinity and in particular, $\liminf_j \log |P_jf(x)|/j\log 2\geq 0$.
 Moreover, since the wavelets have compact support, there exists $A>0$ such that,
 for all $x\in L$ and all $k\geq 1$, $|f_k(x)|\leq A c_{NN_{2k}}=A 2^{N(uv)^k/(k+1)}$. Let $j\geq 1$ and $k\geq 0$ be such that $j\in  (NN_{2k},NN_{2k+1}]$. 
 Then
\begin{align*}
 |P_jf(x)|&\leq\sum_{l=0}^k |f_l(x)|\leq A\left(2^{N(uv)^0}+\cdots+2^{N(uv)^k/(k+1)}\right)\\
 &\leq Ck 2^{N(uv)^k/k}.
\end{align*}
This implies clearly that $\limsup_j  \log |P_jf(x)|/j\log 2\leq 0$, hence that $L\subset E(0,f)$.
 \end{proof}
\begin{remark}
 The proof shows that there exists $f\in B_p^{s,1}(\RR)$ such that 
 $$\dimp\left(\left\{x\in\RR^d;\ \big(P_jf(x)\big)\textrm{ diverges and }\lim_j \frac{\log |P_jf(x)|}{j\log 2}=0\right\}\right)\geq 1-\veps.$$
 If we change the definition of $c_{NN_{2k}}$ into $c_{NN_{2k}}=2^{-NN_{2k}/(k+1)}$, we can also prove the existence of $f\in B_p^{s,1}(\RR)$ such that
  $$\dimp\left(\left\{x\in\RR^d;\ \big(P_jf(x)\big)\textrm{ converges and }\lim_j \frac{\log |R_jf(x)|}{j\log 2}=0\right\}\right)\geq 1-\veps.$$
We can also compare these statements with Proposition \ref{prop:dimhdv}.
\end{remark}

\bibliographystyle{plain} 
\bibliography{biblio,bibliomoi,preprint} 


\end{document}